\documentclass{amsart}
\usepackage{amsmath,amssymb,amsthm,verbatim
}
\usepackage{tikz}
\theoremstyle{definition}
\newtheorem{theorem}{Theorem}
\newtheorem{lemma}{Lemma}
\newtheorem{corollary}{Corollary}
\newtheorem{proposition}{Proposition}

\begin{document}
\title{Generators for Tensor Product Components}
\author{Michael~J.~J.~Barry}
\address{Department of Mathematics\\
Allegheny College\\
Meadville, PA 16335}
\curraddr{15 River Street Unit 205\\
Boston, MA 02108}
\email{mbarry@allegheny.edu}

\subjclass[2010]{20C20}
\keywords{ cyclic group, indecomposable module, module generator, tensor product}

\begin{abstract}
Let $p$ be a prime number, $F$ a field of characteristic $p$, and $G$ a cyclic group of order $q =p^a$ for some positive integer $a$.   Under these circumstances every indecomposable $F G$-module is cyclic.  For indecomposable $F G$-modules $U$ and $W$, we present a new recursive method for identifying a generator for each of the indecomposable components of $U \otimes W$ in terms of a particular $F$-basis of $U \otimes W$.
\end{abstract}
\maketitle

\section{Introduction}
Let $p$ be a prime number, $F$ a field of characteristic $p$, and $G$ a cyclic group of order $q=p^a$.  Up to isomorphism, there is a unique indecomposable $F G$-module $V_q$ of dimension $q$~\cite[pp. 24--25]{Alperin}.  Let $g$ be a generator of $G$.  Then there is an ordered $F$-basis $(v_1,v_2,\dots,v_q)$ of $V_q$ such that $g v_1=v_1$ and $g v_i=v_{i-1}+v_i$ if $i>1$, that is, the matrix of $g$ with respect to this basis is a full Jordan block of eigenvalue $1$.  For an integer $i \in [1,q]$ define the vector space $V_i$ over $F$ by $V_i=\langle v_1,\dots,v_i \rangle$.  Then $V_i$ is an indecomposable $F G$-module and $\{V_1,\dots,V_q\}$ is a complete set of indecomposable $F G$-modules~\cite[pp. 24--25]{Alperin}.

Many authors have investigated the decomposition into indecomposables of $V_r \otimes V_s$ where $1 \leq r,s\leq q$ --- for example, in order of appearance see~\cite{Green1962,Bhama1964, Ralley1969,Renaud1979,McFaul1979,Norman1995,Hou2003,Norman2008,IandI2009}.  From the work of these authors, we know that
\[V_r \otimes V_s \cong V_{\lambda_1} \oplus V_{\lambda_2} \oplus \dots \oplus V_{\lambda_{\min(r,s)}}\]
where $\lambda_1 \geq \lambda_2 \geq \dots \geq \lambda_{\min(r,s)}>0$ but the dimensions $\lambda_\ell$ depend on the characteristic $p$.     Clearly $\{v_i \otimes v_j \mid 1 \leq i \leq r, 1 \leq j \leq s\}$ is an $F$-basis for $V_r \otimes V_s$.  However $\mathcal{B}=\{v_{i,j}=v_i \otimes g^{n-i}v_j \mid 1 \leq i \leq r,1 \leq j \leq s\}$ is also a basis for $V_r \otimes V_s$ that is nicer to work with because $(g-1)(v_{i,j})=v_{i-1,j}+v_{i.j-1}$~\cite[Lemma 4]{Norman1993}~or~\cite[Lemma 1]{B2017}.

For an integer $d$ satisfying $1 \leq d \leq r+s-1$,  define the vector subspace $D_d$ of $V_r \otimes V_s$ by
\[D_d=\langle v_{i,j} \mid i+j=d+1 \rangle.\]
Our preferred basis for $D_d$ is $\mathcal{B}_d=\{v_{i,j} \mid i+j=d+1\}$.
When $1 \leq \ell \leq \min(r,s)$,  $\mathcal{B}_{r+s-\ell}=(v_{r+1-\ell,s},v_{r+2-\ell,s-1},\dots,v_{r,s+1-\ell})$ and $\mathcal{B}_\ell=(v_{1,\ell},v_{2,\ell-1},\dots,v_{\ell,1})$.

When $1 \leq \ell \leq \min(r,s)$,  it is known, see~\cite[Lemma 4]{Norman2008} or~\cite[Theorem 2.2.2]{IandI2009}, that there is $y_\ell \in D_{r+s-\ell}$ such that $F G y_\ell\cong V_{\lambda_\ell}$ and
\[V_r \otimes V_s=F G y_1 \oplus F G y_2\oplus \dots \oplus F G y_{\min(r,s)}.\]
In this paper we will present a recursive algorithm for  identifying $y_\ell$ in terms of $\mathcal{B}_{r+s-\ell}$.  So $(g-1)^{\lambda_\ell}(y_\ell)=0$ but $(g-1)^{\lambda_\ell-1}(y_\ell)\neq 0$.  In addition, we want $(g-1)^{\lambda_\ell-1}(y_\ell)$ to be a specific element of $D_{r+s+1-\ell-\lambda_\ell}$.  For $1 \leq \ell \leq \min(r,s)$, define $x_\ell \in D_\ell$ by $x_\ell=\sum_{m=1}^\ell (-1)^{m-1}v_{m,\ell+1-m}$.  Note that $\{v \in D_\ell\mid (g-1)(v)=0\}=F x_\ell$~\cite[Lemma 3]{Norman2008}.    We will require $(g-1)^{\lambda_\ell-1}(y_\ell)=x_{r+s+1-\ell-\lambda_\ell}$.  

We now explain how recursion arises.  With $n$ as the unique nonnegative integer such that $p^n\leq \max(r,s)<p^{n+1}$ --- $\max(r,s)=p^n=p^a$ is a possibility, write $r=r_n p^n+R_{n-1}$ and $s=s_n p^n+S_{n-1}$ where $r_n$, $s_n$, $R_{n-1}$, and $S_{n-1}$ are integers satisfying $0 \leq r_n,s_n <p$ and $0 \leq R_{n-1},S_{n-1}<p^n$.     Let $m_{n-1}=\min(R_{n-1},S_{n-1})$ and $M_{n-1}=\max(R_{n-1},S_{n-1})$.    The base step for recursion is $m_{n-1}=0$ which includes the obvious case of $r=r_0p^0+0$ and $s=s_0 p^0+0$, but also when $n \geq 1$, the cases $(r,s)=(r_n p^n,s_n p^n)$, $(r,s)=(r_n p^n+R_{n-1},s_n p^n)$ with $R_{n-1}>0$, and $(r,s)=(r_n p^n,s_np^n+S_{n-1})$ with $S_{n-1}>0$.  Now suppose $\ell> \max(r+s-p^{n+1},0)$ and write $\ell=t p^n+k$ where $t$ and $k$ are integers with $t \geq 0$ and $1 \leq k \leq p^n$.  
It turns out that $\lambda_\ell=c p^n+C $ where $1 \leq C \leq p^n$ and $c$ is one of $(r_n+s_n-2t)$,  $(r_n+s_n-2t-1)$,  or $(r_n+s_n-2t-2)$. We will identify $z_\ell \in D_{r+s-\ell-c p^n}$ such that $(g-1)^{C-1}(z_\ell)=x_{r+s+1-\ell-\lambda_\ell}$.  This, which we believe is new,  is where most of the work will be.    To do this in the case when $m_{n-1}>0$, we will use results on $(R_{n-1},S_{n-1})$ and $(p^n-R_{n-1},p^n-S_{n-1})$ when $\max(1,R_{n-1}+S_{n-1}-p^n) \leq C \leq m_{n-1}$ and on $(R_{n-1},p^n-S_{n-1})$ and $(p^n-R_{n-1},S_{n-1})$ when $M_{n-1}+1 \leq C \leq \min(p^n,m_{n-1}+M_{n-1})$.   Recursion is not needed in order to identify $z_\ell$ for other values of $C$.  Then we will lift $z_\ell$ to $y_\ell \in D_{r+s-\ell}$ such that $(g-1)^{c p^n}(y_\ell)=z_\ell$ in a straightforward mechanical manner.  This lifting will involve the inverse of a matrix whose entries are binomials.  Of course the inverse of such a matrix can be written in terms of the determinant and the adjoint, but recent work by Nordenstam and Young~\cite{NordYoung}, specially tailored to this situation,  gives us a new formula for the $(i,j)$ entry of the inverse.

Previous work in this area includes a recursive algorithm by Norman~\cite{Norman2008} in a matrix setting and work by Iima and Iwamatsu~\cite{IandI2009} in the setting of a polynomial ring over two variables.  We give a much more explicit expression for a generator of a component than either of these works.  We see this work partly as a simplification and clarification of the ideas of Norman,  including identifying exactly where recursion is needed.    In addition, our approach is guided by a sharpened restatement of the results of Renaud~\cite{Renaud1979} and we incorporate recent work~\cite{NordYoung}  on inverses of matrices whose entries are binomials.  An earlier solution~\cite{BarryArX}, which initially failed to acknowledge how closely it mirrored ~\cite{IandI2009}, is quite different from the current work.  Some special cases along the lines of~\cite{BarryArX} were handled previously in~\cite{B2011} and~\cite{B2017}. 

We close this section by outlining the contents of the paper.   In Section~\ref{J} we define the $r \times s$ matrix $J(r,s)$, and we follow this in Section~\ref{SRen} by using the work of Renaud to identify $\lambda_\ell$.  Next we record some calculations in Section~\ref{Calc}, while in  Section~\ref{BinCoeffs} we apply the work of Nordenstam and Young.  Then in Section~\ref{FC} we deal with the case of $1 \leq \ell \leq \max(r+s-p^n,0)$.  In Section~\ref{TheCore} we show that some functions defined recursively take alternating values in $F$, which we then use in Sections~\ref{m>0} to identify $z_\ell$ and $y_\ell$ when $m_{n-1}>0$.  Finally Section~\ref{m=0} treats the case of $m_{n-1}=0$.

\section{$J(r,s)$}\label{J}

Loosely following Norman's notation~\cite{Norman2008}, let $J(r,s)$ be an $r \times s$ matrix over $F$ such that
when $1 \leq \ell \leq \min(r,s)$, 
\[(g-1)^{\lambda_\ell-1} \left(\sum_{j=1}^\ell J(r,s)_{\ell+1-j,j} v_{r-\ell+j,s+1-j}\right)=x_{r+s+1-\ell-\lambda_\ell},\]
that is, $\sum_{j=1}^\ell J(r,s)_{\ell+1-j,j} v_{r-\ell+j,s+1-j}$ is a candidate for $y_\ell$.
Thus identifying every $y_\ell$ amounts to specifying the first $\min(r,s)$ anti-diagonals of $J(r,s,p)$.  The other anti-diagonals will consist entirely of $0$s.   For certain $\ell$ there may be more than one candidate for $y_\ell$, so we must indicate how to choose $y_\ell$ in a systematic way.  The next two lemmas accomplish this.

Notation: With $1 \leq \ell \leq \min(r,s)$, define $\ell_0=\min\{i \mid \lambda_i=\lambda_\ell\}$ and $\ell_\infty=\max\{i \mid \lambda_i=\lambda_\ell\}$.

\begin{lemma}\label{Unique}
There is a unique element $y_{\ell_0} \in D_{r+s-\ell_0}$ such that $(g-1)^{\lambda_{\ell_0}-1}(y_{\ell_0})=x_{r+s+1-\ell_0-\lambda_{\ell_0}}$.
\end{lemma}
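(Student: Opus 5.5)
Write $\ell=\ell_0$, $\lambda=\lambda_{\ell_0}$ and $T=g-1$. The plan is to treat $T^{\lambda-1}$ as a linear map
\[\phi:D_{r+s-\ell}\to D_{r+s+1-\ell-\lambda}.\]
This is legitimate because $T(v_{i,j})=v_{i-1,j}+v_{i,j-1}$, so $T$ sends $D_d$ into $D_{d-1}$ (with $v_{0,j}=v_{i,0}=0$). Existence is granted by the results quoted in the introduction: there is $y_\ell\in D_{r+s-\ell}$ with $FGy_\ell\cong V_{\lambda_\ell}$. Hence $T^{\lambda-1}y_\ell$ is a nonzero element of $D_{r+s+1-\ell-\lambda}$ killed by $T$, so by \cite[Lemma 3]{Norman2008} it is a nonzero multiple of $x_{r+s+1-\ell-\lambda}$. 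Rescaling gives existence. Uniqueness is equivalent to $\ker\phi\cap D_{r+s-\ell}=0$, and I would prove this by a dimension count.

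First, $\dim D_{r+s-\ell}=\ell$ for $\ell\le\min(r,s)$, since its basis $\mathcal B_{r+s-\ell}$ has $\ell$ elements. Suppose $0\neq w\in D_{r+s-\ell}$ with $T^{\lambda-1}w=0$. I would use the decomposition $V_r\otimes V_s=\bigoplus_i FGy_i$ with $y_i\in D_{r+s-i}$, together with the grading. Each $FGy_i$ is spanned by $y_i,Ty_i,\dots$, and $T^ky_i\in D_{r+s-i-k}$, so each summand is a graded subspace and the decomposition is compatible with the grading $\bigoplus_d D_d$. Thus
\[D_{r+s-\ell}=\bigoplus_i\bigl(FGy_i\cap D_{r+s-\ell}\bigr),\]
and $FGy_i\cap D_{r+s-\ell}$ is $F\,T^{\ell-i}y_i$ when $i\le\ell$ and $\ell-i<\lambda_i$, and $0$ otherwise. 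Counting dimensions, this is exactly $\ell$ pieces, so all $i\le\ell$ occur and $T^{\ell-i}y_i\neq0$ for each of them.

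On $F\,T^{\ell-i}y_i$, the map $T^{\lambda-1}$ is injective iff $\ell-i+\lambda-1<\lambda_i$, that is, $\lambda_i-\lambda\ge \ell-i$. Since $\ell=\ell_0$ is the first index with value $\lambda$, we have $\lambda_i>\lambda$ for $i<\ell$. The key inequality needed is therefore $\lambda_i-\lambda_\ell\ge \ell-i$ for $i<\ell$, i.e. strictly decreasing by at least one per step. This holds automatically for $i<\ell_0$ because the $\lambda$'s are integers and $\lambda_{i}>\lambda_{i+1}\ge\dots$ fails only at equalities, and there are none before $\ell_0$: $\lambda_i\ge\lambda_{i+1}+1$ for $i<\ell_0$ only if the sequence is strictly decreasing up to $\ell_0$. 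That need not hold in general, so I would refine the argument. The images $T^{\lambda-1}T^{\ell-i}y_i$ for different $i$ lie in distinct summands, so they are independent. Hence $\ker\phi$ is spanned by those $T^{\ell-i}y_i$ with $\ell-i+\lambda-1\ge\lambda_i$, which means $\lambda_i\le\lambda+\ell-i-1$.

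The main obstacle is ruling out such $i$ using the specific shape of $\lambda$ from Renaud. Equivalently, we need $\lambda_i+i\ge\lambda_\ell+\ell$ whenever $\lambda_i>\lambda_\ell$; the case $i=\ell$ is trivial. This is the statement that $\lambda_i+i$ is nonincreasing across strict drops. I would first try to avoid Renaud via a rank argument: the image $T(D_{d})\subseteq D_{d-1}$, and the kernel of $T$ on each $D_d$ is at most one-dimensional by \cite[Lemma 3]{Norman2008}. So $T:D_d\to D_{d-1}$ is injective whenever $\dim D_d\le\dim D_{d-1}$ and $x$-obstructions vanish. Composing $\lambda-1$ such maps from $D_{r+s-\ell}$ downward, I would show that the kernel of the composite is at most one-dimensional, and that any nonzero kernel vector generates a cyclic module of dimension less than $\lambda$ that is the top of some summand $FGy_i$ with $i<\ell$ ending at degree $r+s-\ell-\lambda+1$ or higher. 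Combined with the grading argument, this vector would also have to be a nonzero combination landing in summands with $\lambda_i<\lambda+\ell-i$. I would then use the fact that the socle elements $T^{\lambda_i-1}y_i=x_{r+s+1-i-\lambda_i}$ lie in pairwise distinct degrees. This forces $r+s+1-i-\lambda_i\neq r+s+1-\ell-\lambda$, and monotonicity yields $i+\lambda_i\ge\ell+\lambda$, which gives injectivity and hence uniqueness.
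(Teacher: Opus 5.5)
Your route differs from the paper's. The paper proves injectivity of $(g-1)^{\lambda_{\ell_0}-1}$ on $D_{r+s-\ell_0}$ by importing the block formula $\lambda_{\ell_0}=r+s-\ell_0-\ell_\infty+1$. It then factors the map as $(g-1)^{\ell_\infty-\ell_0}$, injective by Iima--Iwamatsu's Lemma 2.2.5, followed by $(g-1)^{r+s-2\ell_\infty}\colon D_{r+s-\ell_\infty}\to D_{\ell_\infty}$, invertible by their Theorem 2.2.9. You use only the homogeneous decomposition $V_r\otimes V_s=\bigoplus_i FGy_i$ with $y_i\in D_{r+s-i}$ (which the paper also cites, for existence) and the fact that $\ker(g-1)\cap D_d$ is at most one-dimensional.

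Your reduction is sound. The grading gives $D_{r+s-\ell_0}=\bigoplus_{i\le\ell_0}F\,T^{\ell_0-i}y_i$. So, writing $a_i=i+\lambda_i$ and $T=g-1$, the map $T^{\lambda-1}$ is injective there iff $a_i\ge a_{\ell_0}$ for every $i<\ell_0$. Independence of the socles of the summands, together with $\dim(\ker T\cap D_d)\le 1$, correctly shows that the $a_i$ are pairwise distinct. This is more self-contained than the paper's proof; these ingredients essentially recover the block formula rather than citing it. The cost is a short combinatorial step, and that is where the proposal falls short.

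That step is only asserted. You say ``monotonicity yields $i+\lambda_i\ge\ell+\lambda$,'' but $\lambda$ nonincreasing gives only $a_{j+1}\le a_j+1$. Your own gloss (``$\lambda_i+i$ is nonincreasing across strict drops'') compares only adjacent indices. Neither gives the comparison with every earlier $i$, which is exactly what injectivity requires. You can supply it as follows, assuming $\ell_0>1$:
\begin{itemize}
\item Since $\lambda_{\ell_0-1}\ge\lambda_{\ell_0}+1$, we get $a_{\ell_0}\le a_{\ell_0-1}$, hence $a_{\ell_0}<a_{\ell_0-1}$ by distinctness.
\item Suppose some $i<\ell_0-1$ had $a_i<a_{\ell_0}$. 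Then the sequence $a_i,a_{i+1},\dots,a_{\ell_0-1}$ climbs from below $a_{\ell_0}$ to above it in steps of at most $+1$.
\item So $a_j=a_{\ell_0}$ for some $j<\ell_0$, contradicting distinctness.
\end{itemize}
Alternatively, cite the block formula as the paper does: for $i$ in a block $[i_0,i_\infty]$ with $i_\infty<\ell_0$, $a_i\ge r+s+1-i_\infty>r+s+1-\ell_\infty=a_{\ell_0}$. With either argument inserted, your proof is complete.

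You should also delete the exploratory ``rank argument.'' Composing $\lambda-1$ maps $T\colon D_d\to D_{d-1}$, each with at most one-dimensional kernel, only gives $\dim\ker\le\lambda-1$, not $\le 1$. The assertion that a kernel vector is ``the top of some summand'' is unsupported. None of it is needed once the distinctness argument is completed as above.
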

\begin{proof}
From~\cite[Lemma 4]{Norman2008} or~\cite[Theorem 2.2.2]{IandI2009} we know that $(g-1)^{\lambda_{\ell_0}-1}(y)=x_{r+s+1-\ell_0-\lambda_{\ell_0}}$ has a solution for $y \in D_{r+s-\ell_0}$.   It suffices to show that $(g-1)^{\lambda_{\ell_0}-1}$ is injective on $D_{r+s-\ell_0}$.
 By~\cite[Proposition 2]{B2015} or~\cite[Theorem 5]{GPX2},
\begin{align*}
\lambda_{\ell_0}&=r+s-2(\ell_0-1)-(\ell_\infty-(\ell_0-1))\\
&=r+s-\ell_0-\ell_\infty+1.
\end{align*}
Now $(g-1)^{\lambda_{\ell_0}-1}$, which maps $D_{r+s-\ell_0}$ to $D_{\ell_\infty}$, equals the $S_{\ell_\infty} \circ T$ where $T=(g-1)^{\ell_\infty-\ell_0}$ maps $D_{r+s-\ell_0}$ to $D_{r+s-\ell_\infty}$ and $S_{\ell_\infty}=(g-1)^{r+s-2 \ell_\infty}$ maps $D_{r+s-\ell_\infty}$ to $D_{\ell_\infty}$.  By the results in~\cite{IandI2009}  preceding and including Theorem~2.2.9, $S_{\ell_\infty}$ is invertible.  Since $T$ is injective by~\cite[Lemma 2.2.5]{IandI2009},  so is $(g-1)^{\lambda_{\ell_0}-1}$, and the result follows.
\end{proof}
The element $y_{\ell_0}$ corresponds to $\kappa_{(i)}$ in  Iima and Iwamatsu's notation for some $i$.
\begin{lemma}\label{Mult}
Assume that $\ell>\ell_0$ and that
\[y_{\ell_0}=\sum_{j=1}^{\ell_0} J(r,s)_{\ell_0+1-j,j}v_{r-\ell_0+j,s+1-j}.\]
Define $y_\ell \in D_{r+s-\ell}$ by
\[y_\ell=\sum_{j=1}^{\ell} J(r,s)_{\ell+1-j,j}v_{r-\ell+j,s+1-j}\]
where $J(r,s)_{\ell+1-j,j}=(-1)^{\ell-\ell_0}J(r,s)_{\ell_0+1-j,j}$ when $1 \leq j \leq \ell_0$ and $J(r,s)_{\ell+1-j,j}=0$ when $\ell_0+1 \leq j \leq \ell$.   Then $(g-1)^{\lambda_\ell-1}(y_\ell)=x_{r+s+1-\ell -\lambda_\ell}$.
\end{lemma}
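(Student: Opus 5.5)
The proof compares $y_\ell$ with $y_{\ell_0}$ by a shift operator that commutes with $g-1$. Put $\delta=\ell-\ell_0>0$. By the definition of $\ell_0$ and $\ell_\infty$ we have $\ell_0<\ell\le\ell_\infty$, so $\lambda_\ell=\lambda_{\ell_0}$; write $\lambda$ for this common value. Comparing the two defining sums term by term, $y_\ell$ is obtained from $y_{\ell_0}$ by lowering the first index of each basis vector by $\delta$ and multiplying by $(-1)^{\delta}$. The basis vector $v_{r-\ell_0+j,s+1-j}$ becomes $v_{r-\ell+j,s+1-j}$, and this is a genuine element of $\mathcal{B}_{r+s-\ell}$ because $r-\ell+j\ge 1$ when $\ell\le\min(r,s)$.

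So I would define a linear map $\sigma$ on $V_r\otimes V_s$ by $\sigma(v_{i,j})=v_{i-\delta,j}$. We use the convention $v_{i,j}=0$ whenever $i\le 0$ or $j\le 0$. With this notation $y_\ell=(-1)^\delta\sigma(y_{\ell_0})$.

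The key step is to show that $\sigma$ commutes with $g-1$. On the basis $\mathcal{B}$ the rule is $(g-1)(v_{i,j})=v_{i-1,j}+v_{i,j-1}$, read with the same zero convention; this is the content of \cite[Lemma 4]{Norman1993} and \cite[Lemma 1]{B2017}. Then both $(g-1)\sigma(v_{i,j})$ and $\sigma(g-1)(v_{i,j})$ equal $v_{i-\delta-1,j}+v_{i-\delta,j-1}$. One must check the boundary cases $i\le\delta$ and $i=\delta+1$ and $j=1$, where some terms are $0$. These work because $\sigma$ only lowers the first index, so it never pushes an index above $r$ or $s$. This bookkeeping at the edges is the only place I expect any care to be needed.

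Given the commutation, and the hypothesis $(g-1)^{\lambda-1}(y_{\ell_0})=x_N$ with $N=r+s+1-\ell_0-\lambda$ (Lemma~\ref{Unique}), we get
\[(g-1)^{\lambda-1}(y_\ell)=(-1)^\delta\sigma(x_N)=(-1)^\delta\sum_{m=1}^{N}(-1)^{m-1}v_{m-\delta,N+1-m}.\]
The terms with $m\le\delta$ vanish. Re-indexing with $m'=m-\delta$ turns the sum into
\[(-1)^\delta\sum_{m'=1}^{N-\delta}(-1)^{m'+\delta-1}v_{m',(N-\delta)+1-m'}=x_{N-\delta}.\]
Here the two factors $(-1)^\delta$ cancel, since $(-1)^{2\delta}=1$. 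Finally $N-\delta=r+s+1-\ell-\lambda_\ell$, which is exactly the required identity.
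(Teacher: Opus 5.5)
Your proof is correct and is essentially the paper's argument. The paper compares coefficients directly, using the binomial expansion of $(g-1)^{\lambda-1}(v_{a,b})$ and the identity $\binom{\lambda-1}{r-\ell+j-z}=\binom{\lambda-1}{r-\ell_0+j-(z+\ell-\ell_0)}$. This shows that the coefficient of $v_{z,\cdot}$ in $(g-1)^{\lambda-1}(y_\ell)$ is $(-1)^{\ell-\ell_0}$ times the coefficient of $v_{z+\ell-\ell_0,\cdot}$ in $(g-1)^{\lambda-1}(y_{\ell_0})$, which is the coefficient-level form of your statement that $\sigma$ commutes with $g-1$; your packaging as an operator (the analogue of multiplication by $x^{\ell-\ell_0}$ in Iima and Iwamatsu's setting, as the paper remarks) is slightly cleaner.
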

\begin{proof}
Write $\lambda$ for $\lambda_{\ell_0}=\lambda_\ell$.
Since $(g-1)^{\lambda-1}(y_{\ell_0})=x_{r+s+1-\ell_0-\lambda}$, the coefficient of $v_{z, r+s+2-\ell_0-\lambda-z}$, where $1 \leq z \leq  r+s+1-\ell_0-\lambda$, in 
$(g-1)^{\lambda-1}(y_{\ell_0})$ is
\[\sum_{j=1}^{\ell_0} J(r,s)_{\ell_0+1-j,j}\binom{\lambda-1}{r-\ell_0+j-z}=(-1)^{z-1}.\]
On the other hand, the coefficient of $v_{z,r+s+2-\ell -\lambda-z}$, where $1 \leq z \leq  r+s+1-\ell -\lambda$, in $(g-1)^{\lambda-1}(y_\ell)$ is
\begin{align*}
\sum_{j=1}^{\ell} &J(r,s)_{\ell+1-j,j}\binom{\lambda-1}{r-\ell+j-z}\\
&=\sum_{j=1}^{\ell_0} (-1)^{\ell-\ell_0}J(r,s)_{\ell_0+1-j,j}\binom{\lambda-1}{r-\ell_0+j-(z+\ell-\ell_0)}\\
&=(-1)^{\ell-\ell_0}(-1)^{z+\ell-\ell_0-1}\\
&=(-1)^{z-1},
\end{align*}
proving $(g-1)^{\lambda-1}(y_\ell)=x_{r+s+1-\ell -\lambda}$.
\end{proof}
If $y_{\ell_0}$ corresponds $\kappa_{(i)}$ in  Iima and Iwamatsu's notation,  then $y_\ell$ corresponds to $\kappa_{(i )}x^{\ell-\ell_0}$.

\section{Identifying $\lambda_\ell$}\label{SRen}
Now we rewrite the work of Renaud~\cite{Renaud1979} to suit our needs.

\begin{theorem}\label{RenaudThm}
With $p$ a prime number and $r$ and $s$ positive integers, let
\[V_r \otimes V_s \cong V_{\lambda_1} \oplus V_{\lambda_2} \oplus \dots \oplus V_{\lambda_{\min(r,s)}}\]
where $\lambda_1 \geq \lambda_2 \geq \dots \geq \lambda_{\min(r,s)}>0$.

With $n$ as the unique nonnegative integer such that $p^n\leq \max(r,s)<p^{n+1}$,  write $r=r_n p^n+R_{n-1}$ and $s=s_n p^n+S_{n-1}$ where $r_n$, $s_n$, $R_{n-1}$, and $S_{n-1}$ are integers satisfying $0 \leq r_n,s_n <p$ and $0 \leq R_{n-1},S_{n-1}<p^n$. 

Let $m_{n-1}=\min(R_{n-1},S_{n-1})$ and $M_{n-1}=\max(R_{n-1},S_{n-1})$.   We define a $p^n$-tuple $W=(w_1,w_2,\dots,w_{p^n})$ of integers which depends on the values of $m_{n-1}$ and $M_{n-1}$.
If $m_{n-1}=0$,  $W=M_{n-1} \cdot (0)\oplus (p^n-M_{n-1}) \cdot (-p^n)$.  Now assume that $m_{n-1}>0$ and let
 \[V_{R_{n-1}}\otimes V_{S_{n-1}}
\cong V_{\mu_1} \oplus V_{\mu_2} \oplus \dots \oplus V_{\mu_{m_{n-1}}}\]
 where $\mu_1 \geq \mu_2 \geq \dots \geq \mu_{m_{n-1}}>0$.  If $m_{n-1}+M_{n-1} \leq p^n$, 
\begin{align*}
W&=(\mu_1,\mu_2,\dots,\mu_{m_{n-1}}) \oplus (M_{n-1}-m_{n-1}) \cdot (0)\\
&\quad \oplus (-\mu_{m_{n-1}},-\mu_{m_{n-1}-1},\dots,-\mu_1) \oplus (p^n-m_{n-1}-M_{n-1}) \cdot (-p^n).
\end{align*}
If $m_{n-1}+M_{n-1}>p^n$, 
\begin{align*}
W&=(\mu_1,\mu_2,\dots,\mu_{m_{n-1}}) \oplus (M_{n-1}-m_{n-1}) \cdot (0)\\
&\quad \oplus  (-\mu_{m_{n-1}},-\mu_{m_{n-1}-1},\dots,-\mu_{m_{n-1}+M_{n-1}-p^n+1}).
\end{align*}
If $r+s>p^{n+1}$ and $1 \leq \ell \leq r+s-p^{n+1}$, $\lambda_\ell=p^{n+1}$.  If $\max(r+s-p^{n+1},0)+1 \leq \ell \leq \min(r,s)$, write $\ell=t p^n+k$ where $t$ and $k$ are nonnegative integers and $1 \leq k \leq p^n$.  Then $\lambda_\ell=(r_n+s_n-2t)p^n+w_k$.
\end{theorem}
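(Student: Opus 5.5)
This theorem is a restatement of Renaud's recursive description~\cite{Renaud1979} of the decomposition of $V_r\otimes V_s$. The plan is to derive it from known structural facts about tensor products over cyclic $p$-groups rather than from scratch, in three steps.

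\emph{Step 1: reduction.} We work inside the cyclic group of order $p^{n+1}$, which we may assume because $\max(r,s)<p^{n+1}$ and the decomposition depends only on this quotient. We write $V_r$ as a ``sum'' of $V_{r_np^n}$ and $V_{R_{n-1}}$ in the sense of the Green ring. The key tool is the standard formula
\[V_{cp^n+R}\otimes V_{dp^n+S}\;\text{ for } 0\le R,S<p^n,\]
obtained from the following facts. First, $V_{cp^n}\cong V_c\uparrow$ (inflation/induction from the subgroup of order $p$ in the quotient). Second, $V_{p^n}\otimes V_S\cong S\cdot V_{p^n}$. Third, $V_{cp^n+R}$ sits in a Heller-type relation $V_{cp^n+R}\oplus V_{(c+1)p^n - ?}$ \dots. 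Concretely, I would use Renaud's identity in the Green ring,
\[V_{cp^n+R}= V_{cp^n}\cdot\bigl(\text{twist}\bigr)+\dots,\]
or more cleanly the known rule (Green, Renaud) that, for $0<R,S<p^n$ and $V_R\otimes V_S=\bigoplus V_{\mu_i}$, one has
\[V_{cp^n+R}\otimes V_{dp^n+S}\cong\bigoplus_{\text{$t$}}\Bigl(\bigoplus_i V_{(c+d-2t)p^n+\mu_i}\oplus\bigoplus_i V_{(c+d-2t)p^n-\mu_i}\oplus\cdots\Bigr).\]
This is proved by induction on $n$, using that $V_{p^n}$ is induced from the trivial module of the subgroup of order $p^{n}$ (inside the cyclic group of order $p^{n+1}$), and then Mackey.

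\emph{Step 2: counting blocks.} The multiset of dimensions is organized by the ``top digit'' $t$. For each $t$, the $p^n$ summands with $\ell=tp^n+k$, $1\le k\le p^n$, have dimensions $(r_n+s_n-2t)p^n+w_k$. So I would verify that:
\begin{itemize}
\item the $m_{n-1}$ summands with $k\le m_{n-1}$ get $+\mu_k$;
\item $M_{n-1}-m_{n-1}$ summands get exactly a multiple of $p^n$, corresponding to the free part $V_{p^n}\otimes V_{M-m}$;
\item the reflected ones get $-\mu$ (coming from $V_{p^n-R}\otimes V_{p^n-S}$ via $\Omega$-duality $V_{p^n-R}=\Omega V_R$);
\item the remaining ones get $-p^n$.
\end{itemize}
The case $m+M>p^n$ truncates because $V_R\otimes V_S$ then already has $m+M-p^n$ projective summands $V_{p^n}$. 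Those are absorbed into the top-level count, which is also why $\lambda_\ell=p^{n+1}$ for $\ell\le r+s-p^{n+1}$. We check this by a dimension count: the number of $V_{p^{n+1}}$ summands equals $\max(r+s-p^{n+1},0)$, by the standard projective-count formula. After this, ordering by $t$ and then $k$ yields a nonincreasing sequence, since consecutive $t$ differ by $2p^n$ and $w_k\in[-p^n,p^n]$.

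\emph{Step 3: main obstacle.} The hardest part is the boundary bookkeeping. We must show that the indices $\ell$ line up exactly, i.e.\ that the blocks for $t$ and $t+1$ do not interleave. When $w_{p^n}=-p^n$ and the next block's $w_1=\mu_1\le p^n$, ties are possible, and these must be shown to be consistent with the monotone ordering. We must also handle the degenerate case $r_n+s_n-2t$ small, where $(r_n+s_n-2t)p^n-\mu$ must remain positive. For these I would rely on Renaud's original statement, matching it term by term against $W$. I would verify the edge cases $m_{n-1}=0$ and $m+M>p^n$ by the dimension identity
\[\sum_\ell\lambda_\ell=rs.\]
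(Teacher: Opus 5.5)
\textbf{Verdict: genuine gap.} The only part of your plan that would actually prove the statement is the fallback at the end of Step~3: take Renaud's theorem and match it term by term against $W$. That is the paper's route. Steps~1 and~2 do not give an independent derivation. The decomposition formula for $V_{cp^n+R}\otimes V_{dp^n+S}$ is never written down; your displays contain ``?'', ``twist'' and ``$\cdots$''. The ``known rule'' you do display is an imprecise restatement of the theorem itself, so there is nothing for ``induction on $n$ and Mackey'' to act on. The one concrete input you name is also misstated. $V_{p^n}$ is induced from the trivial module of the subgroup of order $p$ (index $p^n$). Inducing the trivial module from the subgroup of order $p^n$ gives $V_p$.

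More importantly, you put the delicate point in the wrong place. Your explanation of the truncation of $W$ when $m_{n-1}+M_{n-1}>p^n$ is incorrect. The omitted entries $-\mu_1=\dots=-\mu_{e_n}=-p^n$ of block $t$ would give $(r_n+s_n-2t-1)p^n$. That is exactly what the first $e_n$ entries $\mu_1=\dots=\mu_{e_n}=p^n$ of block $t+1$ give, so consecutive blocks simply overlap. This has nothing to do with the copies of $V_{p^{n+1}}$: for $p=3$, $r=2$, $s=5$ one has $e_n=1$ but $r+s<9$, and indeed $V_2\otimes V_5\cong V_6\oplus V_4$.

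What must actually be checked against Renaud, and what the paper's proof checks, is this. The pattern $(r_n+s_n-2t)p^n+w_k$ is indexed by the global $\ell=tp^n+k$, not restarted, and it takes over exactly at $\ell=\max(r+s-p^{n+1},0)+1$. For smaller $\ell$ it would give other values; for $p=3$, $r=s=8$ it would give $\lambda_1=15>9$. Concretely, when $r+s>p^{n+1}$ you must confirm two cases:
\begin{itemize}
\item if $R_{n-1}+S_{n-1}<p^n$, then $\lambda_{r+s-p^{n+1}+1}=(2p-r_n-s_n-1)p^n$;
\item if $R_{n-1}+S_{n-1}\ge p^n$, then $\lambda_{r+s-p^{n+1}+1}=(2p-r_n-s_n-2)p^n+\mu_{R_{n-1}+S_{n-1}-p^n+1}$.
\end{itemize}
The second case requires the cancellation of two families of terms in Renaud's formula.

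Your proposed safeguard $\sum_\ell\lambda_\ell=rs$ cannot detect a misalignment of this kind. For $p=3$, $r=s=8$, the correct answer $7\cdot V_9\oplus V_1$ and the pattern applied to every $\ell$ (giving $15,13,11,9,7,5,3,1$) both have total dimension $64$.
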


\begin{proof}
This follows from Renaud~\cite[Theorems ~1~and~2]{Renaud1979}.  We will illustrate with a discussion of the case of $r_n+s_n \geq p$ in our work which corresponds to the case of $r_0+s_0 \geq p$ in Norman.  Other cases can be handled in a similar fashion.  To apply Renaud's Theorem~2,  we need to ignore the multiplicities and assume that $V_{r_1} \otimes V_{s_1}=\sum_{j=1}^{m_{n-1}} V_{b_j}$ where $b_1 \geq b_2 \geq \dots \geq b_{m_{n-1}}>0$.  The term $(r+s-pq)V_{pq}$ in Renaud accounts for $\lambda_\ell=p^{n+1}$ when $1 \leq \ell \leq r+s-p^{n+1}$ and $r_n+s_n \geq p$ in our work.  
It is clear from Renaud's work that what one might call ``semi-periodicity'' $\lambda_{\ell+p^n}=\lambda_\ell-2p^n$ kicks in with the module of largest dimension after the initial $r+s-pq$ modules of dimension $pq$ and continues until $\ell=\min(r,s)-p^n$.  Thus it begins at $\ell=\max(r+s-p^{n+1},0)+1=r+s-p^{n+1}+1$ in our work.  For example when $r_0+s_0 \geq p$ and $r_1+s_1<q$ in Norman, the largest dimension of a module after the initial $r+s-pq$ modules of dimension $pq$ is $(s_0-r_0+2d_2-1)q=(s_0-r_0+2(p-s_0)-1)q=(2p-r_0-s_0-1)q$.  But in our work, since $r+s-p^{n+1}+1=(r_n+s_n-p)p^n+R_{n-1}+S_{n-1}+1$, $(t,k)=(r_n+s_n-p,R_{n-1}+S_{n-1}+1)$ and $\lambda_{r+s-p^{n+1}+1}=(r_n+s_n-2(r_n+s_n-p))p^n+w_{R_{n-1}+S_{n-1}+1}=(2p-r_n-s_n)p^n-p^n$.  Again in Norman when $r_0+s_0 \geq p$ and $r_1+s_1\geq q$, the largest dimension of a module after the initial $r+s-pq$ modules of dimension $pq$ is $(s_0-r_0+2(p-s_0-1))q+b_1=(2p-r_0-s_0-2)q+b_{r_1+s_1-q+1}$. because the $(q-r_1-s_1)  V_{(s_0-r_0+2d_2-1)q}=(q-r_1-s_1)  V_{(s_0-r_0+2(p-s_0)-1)q}$ and $\sum_{j=1}^{r_1+s_1-q}V_{(s_0-r_0+2d_1)q+b_j}=\sum_{j=1}^{r_1+s_1-q}V_{(s_0-r_0+2(p-s_0-1))q+b_j}$ cancel each other.  But in our work, since $r+s-p^{n+1}+1=(r_n+s_n-p)p^n+R_{n-1}+S_{n-1}+1$, $(t,k)=(r_n+s_n-p+1)p^n+R_{n-1}+S_{n-1}-p^n+1$, $\lambda_{r+s-p^{n+1}+1}=(r_n+s_n-2(r_n+s_n-p+1))p^n+w_{R_{n-1}+S_{n-1}-p^n+1}=(2p-r_n-s_n-2)p^n+\mu_{R_{n-1}+S_{n-1}-p^n+1}$.
\end{proof}
\begin{corollary}\label{RenaudCor}
Here we consider only the case of $m_{n-1}>0$, 
 \[V_{R_{n-1}}\otimes V_{S_{n-1}}
\cong V_{\mu_1} \oplus V_{\mu_2} \oplus \dots \oplus V_{\mu_{m_{n-1}}}\]
 where $\mu_1 \geq \mu_2 \geq \dots \geq \mu_{m_{n-1}}>0$, 
$\max(r+s-p^{n+1},0)+1 \leq \ell \leq \min(r,s)$, and $\ell=t p^n+k$ where $t$ and $k$ are nonnegative integers with $1 \leq k \leq p^n$.   Then with $e_n=m_{n-1}+M_{n-1}-p^n$,
\[\lambda_\ell=
\begin{cases}
(r_n+s_n-2t)p^n+p^n, & 1 \leq k \leq \max(e_n,0),\\
(r_n+s_n-2t)p^n+\mu_k, & \max(e_n,0)+1 \leq k \leq m_{n-1},\\
(r_n+s_n-2t)p^n,& m_{n-1}+1 \leq k \leq M_{n-1},\\
(r_n+s_n-2t)p^n-\mu_{m_{n-1}+M_{n-1}+1-k}, & M_{n-1}+1 \leq k \leq p^n-\max(-e_n,0),\\
(r_n+s_n-2t-1)p^n, &p^n-\max(-e_n,0)+1\leq k \leq p^n.
\end{cases}
\]
\end{corollary}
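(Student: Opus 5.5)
The corollary should follow directly from Theorem~\ref{RenaudThm}: there $\lambda_\ell=(r_n+s_n-2t)p^n+w_k$ for $\max(r+s-p^{n+1},0)+1\le \ell\le\min(r,s)$. So all we need is to read off the entry $w_k$ of the $p^n$-tuple $W$ in each range of $k$. We split according to the sign of $e_n=m_{n-1}+M_{n-1}-p^n$, using the two formulas for $W$ in the case $m_{n-1}>0$.

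\emph{Case $e_n\le 0$.} Here $\max(e_n,0)=0$ and $p^n-\max(-e_n,0)=m_{n-1}+M_{n-1}$. By the theorem, $W$ is the concatenation of $(\mu_1,\dots,\mu_{m_{n-1}})$, then $M_{n-1}-m_{n-1}$ zeros, then $(-\mu_{m_{n-1}},\dots,-\mu_1)$, then $p^n-m_{n-1}-M_{n-1}$ copies of $-p^n$. Reading off positions:
\begin{itemize}
\item $w_k=\mu_k$ for $1\le k\le m_{n-1}$;
\item $w_k=0$ for $m_{n-1}+1\le k\le M_{n-1}$;
\item for $M_{n-1}+1\le k\le m_{n-1}+M_{n-1}$, the entry is the $(k-M_{n-1})$-th term of the reversed block, namely $-\mu_{m_{n-1}+1-(k-M_{n-1})}=-\mu_{m_{n-1}+M_{n-1}+1-k}$;
\item $w_k=-p^n$ for the remaining $k$, which gives $(r_n+s_n-2t-1)p^n$.
\end{itemize}
The first case of the corollary, $1\le k\le 0$, is then empty.

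\emph{Case $e_n>0$.} Now $\max(-e_n,0)=0$, so the last case of the corollary is empty and the fourth case runs up to $k=p^n$. The tuple $W$ is $(\mu_1,\dots,\mu_{m_{n-1}})$, then $M_{n-1}-m_{n-1}$ zeros, then $(-\mu_{m_{n-1}},\dots,-\mu_{e_n+1})$. The length check is $m_{n-1}+(M_{n-1}-m_{n-1})+(m_{n-1}-e_n)=p^n$. The middle and last ranges are read off exactly as before. The index formula $-\mu_{m_{n-1}+M_{n-1}+1-k}$ still holds for $M_{n-1}+1\le k\le p^n$, since at $k=p^n$ it gives $\mu_{e_n+1}$, consistent with the theorem.

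The only real work is the first range $1\le k\le e_n$, where the theorem gives $w_k=\mu_k$ but the corollary claims $p^n$. So we must show $\mu_k=p^n$ for $k\le e_n$. This follows by applying Theorem~\ref{RenaudThm} (or the classical fact) to $V_{R_{n-1}}\otimes V_{S_{n-1}}$. Both $R_{n-1},S_{n-1}<p^n$, so this module is a module for a group of order $p^n$, and its summands have dimension at most $p^n$. Its first $R_{n-1}+S_{n-1}-p^n=e_n$ summands are free, of dimension $p^n$. To make this precise, let $n'$ be the exponent with $p^{n'}\le M_{n-1}<p^{n'+1}$, and compare $R_{n-1}+S_{n-1}$ with $p^{n'+1}\le p^n$:
\begin{itemize}
\item If $n'+1=n$, the first clause of the theorem gives $\mu_\ell=p^{n}$ for $\ell\le e_n$.
\item If $n'+1<n$, then $R_{n-1}+S_{n-1}<2p^{n'+1}\le p^n$, so $e_n\le 0$ and this subcase does not occur.
\end{itemize}
This step is the main (mild) obstacle. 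Everything else is bookkeeping of indices, which I would verify in both cases as outlined.
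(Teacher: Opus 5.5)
Your proposal is correct and follows the paper's route: the corollary comes from reading off $w_k$ in Theorem~\ref{RenaudThm} separately for $e_n\le 0$ and $e_n>0$, and your index bookkeeping checks out (one prose slip: when $e_n>0$ the entry at $k=p^n$ is $-\mu_{e_n+1}$, not $\mu_{e_n+1}$). The one substantive input is $\mu_k=p^n$ for $k\le e_n$; the paper leaves this implicit here and later records it in Lemma~\ref{Tensors}(1) via an external reference, whereas you derive it from the first clause of Theorem~\ref{RenaudThm} applied to $(R_{n-1},S_{n-1})$ with exponent $n'$, which is a valid, self-contained justification.
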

Note the first case is empty and the fourth case is $M_{n-1}+1 \leq k \leq M_{n-1}+m_{n-1}$ if $e_n \leq 0$, while the fifth case is empty if $e_n \geq 0$.

\section{How to Calculate}\label{Calc}
The following results are probably well-known but we assemble them here for convenience.   Recall that our calculations of binomial coefficients take place in a field $F$ of characteristic $p$.
\begin{lemma}
Let $n$ be a positive integer and let $v_{a,b} \in \mathcal{B}_{a+b-1}$.  
Then $(g-1)^n(v_{a,b})=\sum_{i=0}^n\binom{n}{n-i}v_{a-n+i,b-i}$ where $v_{a-n+i,b-i}=0$ if $a-n+i \leq 0$ or $b-i \leq 0$.
\end{lemma}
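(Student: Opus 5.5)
The plan is to argue by induction on $n$, starting from the one-step identity $(g-1)(v_{a,b})=v_{a-1,b}+v_{a,b-1}$, which is quoted in the introduction from~\cite[Lemma 4]{Norman1993} or~\cite[Lemma 1]{B2017}. Throughout we use the convention $v_{a,b}=0$ whenever $a\le 0$ or $b\le 0$. With this convention the identity holds for every basis element, including those on the boundary. For example, when $a=1$ we get $(g-1)(v_{1,b})=v_{1,b-1}$. This is consistent because $v_{1,b}=v_1\otimes g^{n-1}v_b$ and $v_1$ is fixed by $g$, so the $v_{0,b}$ term correctly vanishes.

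The base case $n=1$ is exactly the quoted identity, since $\binom{1}{1}v_{a-1,b}+\binom{1}{0}v_{a,b-1}$ is the claimed sum.

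For the inductive step, assume the formula for $n$ and apply $g-1$ to both sides. Using linearity and the one-step identity (with the zero convention), we get
\[(g-1)^{n+1}(v_{a,b})=\sum_{i=0}^n\binom{n}{i}\bigl(v_{a-n+i-1,b-i}+v_{a-n+i,b-i-1}\bigr),\]
where $\binom{n}{n-i}=\binom{n}{i}$. Next reindex the second sum by $i\mapsto i-1$, so that both sums run over terms $v_{a-(n+1)+i,\,b-i}$. The coefficient of $v_{a-(n+1)+i,b-i}$ is then $\binom{n}{i}+\binom{n}{i-1}=\binom{n+1}{i}=\binom{n+1}{n+1-i}$ by Pascal's rule, with out-of-range binomials equal to $0$. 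This is the claimed formula for $n+1$. Since Pascal's rule is an identity over $\mathbb{Z}$, it remains valid after reduction into $F$.

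An alternative route is to write $g-1=A+B$, where $A$ and $B$ are the commuting "lower the first index" and "lower the second index" operators on $\mathcal{B}$, and then apply the binomial theorem. The induction above is the same argument made explicit.

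The only point requiring care, and the main obstacle, is the boundary: we must check that the zero convention is compatible with the one-step identity when $a=1$ or $b=1$. This amounts to confirming that the quoted lemma is stated, or holds, with those terms vanishing. Once that is checked, the rest is routine bookkeeping.
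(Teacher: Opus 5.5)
Your proposal is correct and follows the same route as the paper, whose proof is simply ``induction on $n$ starting with $(g-1)(v_{a,b})=v_{a-1,b}+v_{a,b-1}$''. Your reindexing and use of Pascal's rule $\binom{n}{i}+\binom{n}{i-1}=\binom{n+1}{i}$ just spell that induction out. The boundary check you flag is also fine: for $a=1$ or $b=1$ the one-step identity holds because $gv_1=v_1$, and when an index is already $\le 0$ both terms on the right also have an index $\le 0$, so both sides vanish.
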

\begin{proof}
Induction on $n$ starting with $(g-1)(v_{a,b})=v_{a-1,b}+v_{a,b-1}$.
\end{proof}
\begin{lemma}
For a positive integer $n$, $\binom{p^n-1}{i}=(-1)^i$ for every integer $i \in [0,p^n-1]$..
\end{lemma}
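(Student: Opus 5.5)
The plan is to work over the prime field $\mathbb{F}_p \subseteq F$, or equivalently modulo $p$ in $\mathbb{Z}$. The cleanest route is the polynomial identity $(1+X)^{p^n} = 1 + X^{p^n}$ in $\mathbb{F}_p[X]$. This follows by induction on $n$ from the Frobenius fact $(a+b)^p = a^p + b^p$ in characteristic $p$, which itself comes from $p \mid \binom{p}{i}$ for $0<i<p$.

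Next I would divide by $1+X$. In $\mathbb{F}_p[X]$ we have
\[(1+X)^{p^n-1} = \frac{1+X^{p^n}}{1+X}.\]
The right-hand side is a genuine polynomial. For $p$ odd, $p^n$ is odd, so
\[\frac{1+X^{p^n}}{1+X} = \sum_{i=0}^{p^n-1} (-1)^i X^i\]
as an identity over $\mathbb{Z}$, hence also over $\mathbb{F}_p$. For $p=2$, I would use $1+X^{2^n} = 1 - X^{2^n}$ and $1+X = 1-X$ in $\mathbb{F}_2[X]$. The quotient is then $\sum_{i=0}^{2^n-1} X^i$, which equals $\sum_{i} (-1)^i X^i$ since $-1=1$ in characteristic $2$.

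Comparing the coefficients of $X^i$ for $0 \le i \le p^n-1$ gives $\binom{p^n-1}{i} = (-1)^i$ in $F$, because $\mathbb{F}_p[X]$ is a domain and the division is exact.

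As an alternative check, one can induct on $i$ using Pascal's rule $\binom{p^n-1}{i} + \binom{p^n-1}{i-1} = \binom{p^n}{i}$. The right-hand side vanishes in $F$ for $1 \le i \le p^n-1$, since $p \mid \binom{p^n}{i}$ for $0<i<p^n$ (Kummer's theorem, or the Frobenius identity above). This gives $\binom{p^n-1}{i} = -\binom{p^n-1}{i-1}$, starting from $\binom{p^n-1}{0}=1$, and hence $(-1)^i$.

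This second argument avoids any parity split, so I would probably present it as the main proof. No real obstacle is expected; the only point needing care is justifying that $p$ divides $\binom{p^n}{i}$ for $0<i<p^n$, which I would cite via the Frobenius identity.
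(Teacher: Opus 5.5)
Your proof is correct, but your main (Pascal's rule) argument takes a different route from the paper. The paper handles $n=1$ by induction on $i$ via the ratio $\binom{p-1}{i+1}=\frac{p-1-i}{i+1}\binom{p-1}{i}$. That step divides by $i+1$ in $F$, so it cannot be carried over to $n\ge 2$, where $p$ may divide $i+1$. For $n\ge 2$ the paper therefore switches to Lucas's theorem, $\binom{p^n-1}{i}=\prod_j\binom{p-1}{i_j}=(-1)^{\sum_j i_j}$. It then needs a separate parity argument: $\sum_j i_j\equiv i \pmod 2$ when $p$ is odd, with $p=2$ treated apart.

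Your additive recurrence $\binom{p^n-1}{i}=\binom{p^n}{i}-\binom{p^n-1}{i-1}=-\binom{p^n-1}{i-1}$ never divides. It therefore works uniformly in $n$ and $p$, and the characteristic-$2$ case is absorbed automatically because $-1=1$. The only input is $p\mid\binom{p^n}{i}$ for $0<i<p^n$, which you correctly derive from $(1+X)^{p^n}=1+X^{p^n}$ in $\mathbb{F}_p[X]$.

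Your argument is more self-contained than the paper's. The paper's use of Lucas's theorem has the advantage of sharing machinery with the next lemma on $\binom{cp^n}{i}$, which is proved the same way. Your alternative generating-function argument via $(1+X)^{p^n-1}=(1+X^{p^n})/(1+X)$ is also valid, but like the paper it needs the split between odd $p$ and $p=2$.
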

\begin{proof}
First assume that $n=1$ and that $\binom{p-1}{i}=(-1)^i$.  We will show that $\binom{p-1}{i+1}=(-1)^{i+1}$.
\[\binom{p-1}{i+1}=\frac{(p-1)!}{(i+1)!(p-i-2)!}=\frac{(p-i-1)}{i+1}\binom{p-1}{i}=(-1)(-1)^i=(-1)^{i+1}.
\]
Now assume that $n \geq 2$.  The base $p$ expansion of $p^n-1$ is $(p-1)p^{n-1}+(p-1)p^{n-2}+\dots+(p-1)p+p-1$.  Suppose that the base $p$ expansion of $i$ is $i_{n-1} p^{n-1}+i_{n-2} p^{n-1}+\dots+i_1 p+i_0$.  Then in $F$ by Lucas's theorem~\cite{Fine}
\begin{align*}
\binom{p^n-1}{i}&=\binom{p-1}{i_{n-1}}\binom{p-1}{i_{n-2}}\dots \binom{p-1}{i_1}\binom{p-1}{i_0}\\
&=(-1)^{i_{n-1}+i_{n-2}+\dots+ i_1+i_0}.
\end{align*}
If $p$ is an odd prime,
\[(-1)^{i_{n-1}+i_{n-2}+\dots+ i_1+i_0}=(-1)^{i_{n-1}p^{n-1}+i_{n-2}p^{n-2}+\dots+ i_1 p+i_0}=(-1)^i.\]
On the other hand, $\binom{2^n-1}{i}=1=(-1)^i$.
\end{proof}
\begin{lemma}
For a positive integers $n$ and $c$ with $c<p$, $\binom{cp^n}{i}=0$ unless $i=j p^n$ with $0 \leq j \leq c$.
\end{lemma}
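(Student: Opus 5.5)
The plan is to reduce everything to Lucas's theorem, just as in the proof of the preceding lemma. Fix $c$ with $1 \leq c < p$, so that the base $p$ expansion of $cp^n$ has a single nonzero digit, namely the digit $c$ in position $n$. If $i<0$ or $i>cp^n$, then $\binom{cp^n}{i}=0$ by convention, and $i$ certainly is not of the form $jp^n$ with $0\leq j\leq c$ unless that holds trivially. So we may restrict attention to $0 \leq i \leq cp^n$.

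For such $i$, write the base $p$ expansion $i=i_n p^n+i_{n-1}p^{n-1}+\dots+i_1p+i_0$. Here $i_n\le c<p$, because $i\le cp^n<p^{n+1}$. By Lucas's theorem~\cite{Fine}, in $F$ we have
\[\binom{cp^n}{i}=\binom{c}{i_n}\prod_{m=0}^{n-1}\binom{0}{i_m}.\]
The factor $\binom{0}{i_m}$ equals $0$ whenever $i_m>0$. Hence the product is nonzero only if $i_0=i_1=\dots=i_{n-1}=0$, that is, only if $i=i_np^n$. In that case we take $j=i_n$, and $0\le j\le c$ follows from $i\le cp^n$. This is exactly the claim.

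There is essentially no obstacle. The only points that need care are that $c<p$, so that $c$ really is the single digit in position $n$ of $cp^n$, and that values of $i$ outside $[0,cp^n]$ are handled by the convention that such binomial coefficients vanish. As a remark, one can also record that for $i=jp^n$ the value is $\binom{c}{j}$, which is nonzero in $F$, though the statement does not require this.
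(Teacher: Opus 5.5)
Your proposal is correct and follows the paper's own proof: restrict to $0 \le i \le cp^n$, expand $i$ in base $p$, and apply Lucas's theorem to get $\binom{c}{i_n}\prod_{m=0}^{n-1}\binom{0}{i_m}$, which vanishes unless all lower digits are zero and $i_n \le c$. Your sentence about $i$ outside $[0,cp^n]$ is worded awkwardly, but the point is right: there the coefficient is $0$, so the claim holds.
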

\begin{proof}
We may suppose  that $0 \leq i \leq c p^n$ and write the base $p$ expansion of $i$ as $i_n p^n+\dots+i_1 p+i_0$.  Then by Lucas's theorem~\cite{Fine}
\[\binom{cp^n}{i}=\binom{c}{i_n}\binom{0}{i_{n-1}}\dots \binom{0}{i_1}\binom{0}{i_0}.\]
Thus $\binom{cp^n}{i}=0$ unless $i_{n-1}=i_{n-2}=\dots=i_1=i_0=0$ and $i_n \leq c$.
\end{proof}
\begin{corollary}\label{p^n2}
For a positive integers $n$ and $c$ with $c<p$,
\[(g-1)^{c p^n}(v_{a,b})= \sum_{i=0}^c \binom{c p^n}{c p^n -i p^n} v_{a-cp^n+i p^n,b-ip^n}
=\sum_{i=0}^c \binom{c}{i} v_{a-cp^n+i p^n,b-ip^n}\]
where $v_{a-cp^n+i p^n,b-ip^n}=0$ if $a-cp^n+i p^n \leq 0$ or $b-ip^n \leq 0$.
\end{corollary}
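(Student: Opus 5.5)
This follows by substituting the preceding lemma on the binomial coefficients $\binom{cp^n}{i}$ into the first lemma of this section. That lemma, with exponent $cp^n$, gives
\[(g-1)^{cp^n}(v_{a,b})=\sum_{i=0}^{cp^n}\binom{cp^n}{cp^n-i}\,v_{a-cp^n+i,\,b-i},\]
with the convention that any $v_{a',b'}$ with $a'\leq 0$ or $b'\leq 0$ is zero. We could instead write $(g-1)^{cp^n}=((g-1)^{p^n})^c$ and use $(g-1)^{p^n}=g^{p^n}-1$ in characteristic $p$. However, the binomial-coefficient route is more direct and matches the way the corollary is phrased.

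First, note that $\binom{cp^n}{cp^n-i}=\binom{cp^n}{i}$. By the preceding lemma, this vanishes unless $i=jp^n$ with $0\leq j\leq c$. All other terms therefore drop out, and after renaming the index $j$ as $i$, only the $c+1$ terms with summation index $ip^n$ survive. These are $\binom{cp^n}{cp^n-ip^n}v_{a-cp^n+ip^n,\,b-ip^n}$, which is the first equality of the corollary.

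Second, we evaluate $\binom{cp^n}{cp^n-ip^n}=\binom{cp^n}{(c-i)p^n}$ with Lucas's theorem, exactly as in the proof of the preceding lemma. The base $p$ expansions of $cp^n$ and $(c-i)p^n$ each have a single nonzero digit, namely $c$ and $c-i$ respectively, both in position $n$; here we use $0\leq c-i\leq c<p$. All other factors are $\binom{0}{0}=1$, so the coefficient equals $\binom{c}{c-i}=\binom{c}{i}$ in $F$. This gives the second equality.

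The only point needing care is the boundary convention. Terms whose indices leave the range are set to zero in the first lemma, and that convention carries over unchanged to the surviving terms. Apart from this, there is no real obstacle; the argument is a direct substitution of the two lemmas.
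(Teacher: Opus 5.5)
Your proposal is correct and is the argument the paper intends. The paper states the corollary without proof, as an immediate consequence of the expansion formula for $(g-1)^n(v_{a,b})$ and the Lucas-theorem lemma on $\binom{cp^n}{i}$. The same Lucas computation there gives $\binom{cp^n}{(c-i)p^n}=\binom{c}{c-i}=\binom{c}{i}$, exactly as you wrote.
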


\section{Matrices with binomial entries}\label{BinCoeffs}
We will use invertible matrices with binomial entries in lifting $z_\ell$ to $y_\ell$.

\begin{proposition}\label{Adjoint}
With $a$ and $b$ nonnegative integers, let $M$ be the $k \times k$ matrix with $(i,j)$-entry $\binom{a}{b+j-i} \in F$ and suppose that $M$ is invertible.  Let $d_k=\prod_{i=0}^{k-1} \binom{a+i}{b}/\binom{b+i}{b} \in \mathbb{Q}$, and for each $(i,j)$ with $1 \leq i,j \leq k$, define
\[
z_{i,j}=\frac{d_k}{\binom{a+k-1}{b+i-1}}\sum_{\ell=1}^j(-1)^{\ell+j}\binom{a+k-1}{\ell-1}\binom{a+j-\ell-1}{j-\ell}\prod_{r=1,r \neq i}^k \frac{\ell-b-r}{i-r}
\in \mathbb{Q}.
\]
Then $d_k \in \mathbb{Z}$ and $z_{i,j} \in \mathbb{Z}$ for every $(i,j)$.  Let $\phi$ is the natural ring homomorphism of $\mathbb{Z}$ into $F$.   Since $M$ is invertible, $\det M=\phi(d_k) \neq 0 \in F$.   Define the $k \times k$ matrix $N$ whose $(i,j)$-entry is $\phi(d_k)^{-1}\phi(z_{i,j})$.  Then  $M^{-1}=N$.
\end{proposition}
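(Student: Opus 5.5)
The plan is to prove everything first over $\mathbb{Q}$ for the integer matrix $\widetilde M$ with $(i,j)$-entry $\binom{a}{b+j-i}\in\mathbb{Z}$, and then reduce modulo $p$ via $\phi$. Over $\mathbb{Q}$ I will invoke the result of Nordenstam and Young~\cite{NordYoung}. Their formula is for the inverse of the Toeplitz matrix of binomial coefficients $\binom{a}{b+j-i}$, and should give $\det\widetilde M=d_k$ (a Gessel--Viennot / MacMahon box-count style product) and $(\widetilde M^{-1})_{i,j}=z_{i,j}/d_k$. Before using it I would check indexing conventions against theirs, possibly transposing or replacing $(i,j)$ by $(k+1-j,k+1-i)$, since $\widetilde M$ is persymmetric. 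In that normalization their entry is expressed via the Lagrange interpolation basis $\prod_{r\ne i}(\ell-b-r)/(i-r)$, which is exactly what appears in $z_{i,j}$.

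Integrality comes next. The determinant $\det\widetilde M$ is a determinant of an integer matrix, so $d_k=\det\widetilde M\in\mathbb{Z}$. For $z_{i,j}$, the adjugate formula gives $z_{i,j}=d_k(\widetilde M^{-1})_{i,j}=\operatorname{adj}(\widetilde M)_{i,j}$, a signed minor of an integer matrix, hence an integer. So integrality is not proved from the explicit sum at all; it follows from identifying the expression with the cofactor. If $d_k=0$ in $\mathbb{Q}$ the formula is meaningless, but then $\phi(\det\widetilde M)=\det M=0$, contradicting invertibility of $M$. So $d_k\neq0$ under the hypothesis and the cofactor identification is legitimate.

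Reduction to $F$ is then formal. The map $\phi$ extends entrywise to a ring homomorphism on integer matrices, taking $\widetilde M$ to $M$, since binomial entries in $F$ are images of integer binomials. It commutes with determinants and adjugates, so $\det M=\phi(d_k)$, which is nonzero because $M$ is invertible. Applying $\phi$ to $\widetilde M\,\operatorname{adj}(\widetilde M)=d_kI$ gives $M\,(\phi(z_{i,j}))=\phi(d_k)I$, hence $M^{-1}=N$.

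The main obstacle is the first step: matching the paper's closed form for $z_{i,j}$ exactly with the statement in~\cite{NordYoung}, including signs $(-1)^{\ell+j}$ and the normalizing factor $d_k/\binom{a+k-1}{b+i-1}$. If their statement does not literally match, I would verify $\widetilde M Z=d_kI$ directly over $\mathbb{Q}$ instead. That means checking $\sum_j\binom{a}{b+j-i}z_{j,m}=d_k\delta_{im}$. Lagrange interpolation reduces this to a polynomial identity in $\ell$, and Vandermonde's convolution handles the sums of products of binomials.
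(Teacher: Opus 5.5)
Your proposal follows the paper's proof essentially step for step. The paper works with the integer matrix, takes $\det=d_k$ from Roberts rather than from Nordenstam and Young, and identifies $z_{i,j}$ as the $(i,j)$ entry of $\mathrm{adj}(M_{\mathbb{Q}})$ via Nordenstam--Young's Theorem 1 with $(A,B_j,n)=(a,b+j,k)$, which makes integrality automatic; it then applies $\phi$ to $M_{\mathbb{Q}}\,\mathrm{adj}(M_{\mathbb{Q}})=d_kI$, and your remark that $d_k\neq 0$ over $\mathbb{Q}$ is forced by invertibility over $F$ just makes explicit what the paper leaves implicit.
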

\begin{proof}
Let $M_\mathbb{Q}$ be the $k \times k$ matrix with $(i,j)$-entry $\binom{a}{b+j-i} \in \mathbb{Q}$.  Then $\det M_\mathbb{Q} \in \mathbb{Z}$, actually equal to $d_k$ by a result of Roberts~\cite{Roberts}, and $adj(M_\mathbb{Q})$, the adjoint of $M_\mathbb{Q}$, is a matrix of integers, satisfying
$M_\mathbb{Q} adj(M_\mathbb{Q})= d_k I_k=adj(M_\mathbb{Q}) M_\mathbb{Q}$.
Replacing  $(A,B_j,n)$ by $(a,b+j,k)$ in~\cite[Theorem 1]{NordYoung}, we see that $z_{i,j}$ is the $(i,j)$-entry of $adj(M_\mathbb{Q})$.  Now apply $\phi$ to conclude that $M adj(M)=\phi(d_k) I_k=(\det M)I_k=adj(M) M$ and $M^{-1}=(\det M)^{-1} adj(M)=N$.  
\end{proof}

\begin{lemma}\label{pdiv}
Let $a$ and $b$ be nonnegative integers and let $M$ be the $k \times k$ matrix with $(i,j)$ entry $\binom{a}{b+j-i}\in F$.   If $b \leq a$ and $a+k-1<p$, then $M$ is invertible.
\end{lemma}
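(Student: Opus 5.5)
We prove Lemma~\ref{pdiv} by computing $\det M$ over $\mathbb{Q}$ and showing that its image in $F$ is nonzero. By Proposition~\ref{Adjoint} (really Roberts' result quoted in its proof), the determinant of the integer matrix $M_\mathbb{Q}$ with $(i,j)$-entry $\binom{a}{b+j-i}$ equals
\[
d_k=\prod_{i=0}^{k-1}\frac{\binom{a+i}{b}}{\binom{b+i}{b}} .
\]
Since reduction modulo $p$ is a ring homomorphism and the determinant is a polynomial in the entries, $\det M=\phi(d_k)$. So it suffices to show that $p\nmid d_k$ as an integer. Then $\det M\neq 0$ in $F$, and $M$ is invertible.

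To show $p\nmid d_k$, we rewrite $d_k$ as a quotient of products of integers, none of which is divisible by $p$. Each factor is
\[
\frac{\binom{a+i}{b}}{\binom{b+i}{b}}=\frac{(a+i)!\,i!}{(a+i-b)!\,(b+i)!}.
\]
The hypothesis $b\le a$ guarantees that every factorial here has a nonnegative argument and that $\binom{a+i}{b}\neq 0$. Every argument is at most $a+k-1<p$, because $i\le k-1$ and $b+i\le a+i\le a+k-1$. Hence every factorial appearing in $d_k$ is a product of positive integers less than $p$, and so is prime to $p$.

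Therefore $d_k$ is a ratio $N/D$ of integers with $p\nmid N$ and $p\nmid D$. Since $d_k$ is itself an integer, $d_k D=N$, and $p\nmid N$ forces $p\nmid d_k$. This completes the argument.

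The only delicate point is the bookkeeping needed to confirm that no argument exceeds $p-1$ and none is negative, which is exactly where both hypotheses $b\le a$ and $a+k-1<p$ enter.

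Alternatively, without relying on Roberts' formula, we could argue directly. By Lucas's theorem, $a+k-1<p$ means that all the relevant binomial coefficients are ordinary ones below $p$. One could then row-reduce, for instance via the Lindström–Gessel–Viennot path interpretation, but the determinant formula route above is shorter.
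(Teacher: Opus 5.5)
Your proof is correct and takes the same route as the paper: both invoke Roberts' formula $\det M_{\mathbb{Z}}=\prod_{i=0}^{k-1}\binom{a+i}{b}/\binom{b+i}{b}$ and conclude that this integer is prime to $p$. You also spell out the factorial bookkeeping that the paper compresses into ``the result follows'': every factorial argument lies in $[0,a+k-1]\subseteq[0,p-1]$, which uses $b\le a$.
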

\begin{proof}
Let $M_{\mathbb{Z}}$ be the $k \times k$ matrix with $(i,j)$ entry $\binom{a}{b+j-i}\in \mathbb{Z}$.  Then by~\cite{Roberts}, $\det M_{\mathbb{Z}}
=\prod_{z=0}^{k-1} \binom{a+z}{b}/\binom{b+z}{b}$.  The result follows.
\end{proof}

\section{First Case}\label{FC}

\begin{lemma}\label{Overrun}
Suppose that $r+s>p^{n+1}$ and $1 \leq \ell \leq r+s-p^{n+1}$,  so $\lambda_\ell=p^{n+1}$ by Theorem~\ref{RenaudThm}.  Let $y_\ell=(-1)^{r-\ell}v_{r+1-\ell,s}$.  Then $(g-1)^{p^{n+1}-1}(y_\ell)=x_{r+s+1-\ell-p^{n+1}}$. 
\end{lemma}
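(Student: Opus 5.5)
We compute $(g-1)^{p^{n+1}-1}(v_{r+1-\ell,s})$ directly with the first lemma of Section~\ref{Calc}, then evaluate the binomials with the second lemma there (applied with exponent $n+1$).

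By that lemma, with $N=p^{n+1}-1$, $a=r+1-\ell$, $b=s$,
\[(g-1)^{N}(v_{a,b})=\sum_{i=0}^{N}\binom{N}{N-i}v_{a-N+i,\,b-i},\]
where terms with a nonpositive index vanish. Since $\binom{p^{n+1}-1}{N-i}=(-1)^{N-i}$ for $0\le i\le N$, every surviving term has coefficient $\pm1$. The surviving terms lie in $D_d$ with $d=a+b-1-N=r+s+1-\ell-p^{n+1}$, which is at least $1$ because $\ell\le r+s-p^{n+1}$.

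The remaining work is to check that every $v_{z,d+1-z}$ with $1\le z\le d$ actually appears, with no truncation from the range $0\le i\le N$. The first index $z=a-N+i$ runs from $a-N$ to $a$, and the second is $b-i$. We need the range $1\le z\le d$ to fit inside this window, i.e. $a-N\le 1$ and $d\le a$.
\begin{itemize}
\item The condition $d\le a$ is equivalent to $s\le p^{n+1}$, which holds because $s\le \max(r,s)<p^{n+1}$.
\item The condition $a-N\le 1$ is equivalent to $r-\ell+1\le p^{n+1}$, which holds because $r<p^{n+1}$ and $\ell\ge1$.
\end{itemize}
So all $d$ basis vectors of $\mathcal{B}_d$ appear, and the coefficient of $v_{z,d+1-z}$ is $(-1)^{N-i}$ with $i=z-a+N$. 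This coefficient equals $(-1)^{a-z}=(-1)^{r+1-\ell-z}$.

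Multiplying by $(-1)^{r-\ell}$, the coefficient of $v_{z,d+1-z}$ in $(g-1)^{N}(y_\ell)$ becomes $(-1)^{r-\ell}(-1)^{r+1-\ell-z}=(-1)^{z-1}$. This is exactly the coefficient of $v_{z,d+1-z}$ in $x_d$, which proves $(g-1)^{p^{n+1}-1}(y_\ell)=x_{r+s+1-\ell-p^{n+1}}$.

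The main (mild) obstacle is this index bookkeeping. One must confirm that the vanishing conventions $v_{i,j}=0$ for $i\le0$ or $j\le0$ cut off exactly the terms outside $\mathcal{B}_d$, and that no term of $x_d$ is lost at either end. For $p=2$ the sign check is trivial, since all signs are $1$ in $F$.
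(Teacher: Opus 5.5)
Your proof is correct and follows the same route as the paper. Both arguments expand $(g-1)^{p^{n+1}-1}(v_{r+1-\ell,s})$ with the first lemma of Section~\ref{Calc}, read off the coefficient $(-1)^{r-\ell}\binom{p^{n+1}-1}{r+1-\ell-z}$ of each basis vector of $D_{r+s+1-\ell-p^{n+1}}$, and evaluate it as $(-1)^{z-1}$ using $\binom{p^{n+1}-1}{i}=(-1)^i$. Your explicit check that $0\le r+1-\ell-z\le p^{n+1}-1$ for all $1\le z\le r+s+1-\ell-p^{n+1}$, which uses $r,s<p^{n+1}$, supplies a detail the paper leaves implicit.
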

\begin{proof}
Note our choice of $y_\ell$ for $\ell>1$ is in line with the procedure discussed in Section~\ref{J}.  Now $(g-1)^{p^{n+1}-1}$ maps $D_{r+s-\ell}$ to $D_{r+s+1-\ell-p^{n+1}}$ which has basis 
\[\{v_{z,r+s+2-\ell-p^{n+1}-z} \mid 1 \leq z \leq r+s+1-\ell-p^{n+1}\}.\]
The coefficient of $v_{z,r+s+2-\ell-p^{n+1}-z}$ in $x_{r+s+1-\ell-p^{n+1}}$ is
\[(-1)^{r-\ell} \binom{p^{n+1}-1}{r+1-\ell-z}=(-1)^{r-\ell}(-1)^{r+1-\ell-z}=(-1)^{z-1},
\]
proving that $(g-1)^{p^{n+1}-1}(y_\ell)=x_{r+s+1-\ell-p^{n+1}}$. 
\end{proof}
For the rest of this paper we assume that $\max(r+s-p^{n+1},0)+1 \leq \ell \leq \min(r,s)$.

\section{Alternating $F$-valued Functions}\label{TheCore}
In this section $m_{n-1}>0$ and
\[V_{R_{n-1}}\otimes V_{S_{n-1}}
\cong V_{\mu_1} \oplus V_{\mu_2} \oplus \dots \oplus V_{\mu_{m_{n-1}}}\]
where $\mu_1 \geq \mu_2 \geq \dots \geq \mu_{m_{n-1}}>0$.  
\begin{lemma}\label{Tensors}
Let $e_n=m_{n-1}+M_{n-1}-p^n$ and $f_n=S_{n-1}-R_{n-1}$.  Then 
 \begin{enumerate}
 \item $V_{R_{n-1}}\otimes V_{S_{n-1}}
\cong \max(e_n,0) \cdot V_{p^n} 
\oplus V_{\mu_{\max(e_n,0)+1}} \oplus \dots \oplus V_{\mu_{m_{n-1}}}$ \\with $\mu_{\max(e_n,0)+1}<p^n$,
 \item $V_{p^n-R_{n-1}} \otimes V_{p^n-S_{n-1}}\cong
\max(-e_n,0) \cdot V_{p^n}
 \oplus V_{\mu_{\max(e_n,0)+1}} \oplus \dots \oplus V_{\mu_{m_{n-1}}}$,
  \item $
 V_{p^n-R_{n-1}} \otimes V_{S_{n-1}}\cong
 \max(f_n,0)\cdot V_{p^n}
\oplus V_{p^n-\mu_{m_{n-1}}} \oplus \dots \oplus V_{p^n-\mu_{\max(e_n,0)+1}}$,

and

 \item $V_{R_{n-1}} \otimes V_{p^n-S_{n-1}}\cong
 \max(-f_n,0)\cdot V_{p^n}
\oplus V_{p^n-\mu_{m_{n-1}}} \oplus \dots \oplus V_{p^n-\mu_{\max(e_n,0)+1}}$.
\end{enumerate}
\end{lemma}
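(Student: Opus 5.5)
Write $R=R_{n-1}$, $S=S_{n-1}$, $m=m_{n-1}$, $M=M_{n-1}$, $N=p^n$, so $0<m\le M<N$. The plan rests on two ingredients. The first is the standard fact that $V_N$ is projective for $F[\mathbb{Z}/N]$, so $V_N\otimes V_j\cong j\cdot V_N$. The second is Green's relation $V_{N-i}\cong\Omega(V_i)$: this gives $V_{N-i}\otimes V_j\cong\Omega(V_i\otimes V_j)\oplus(\text{projective})$, and $\Omega$ sends a non-projective $V_\mu$ ($\mu<N$) to $V_{N-\mu}$ and kills projectives. I would work over the group of order $N$, which is legitimate because all modules involved have dimension at most $N$ and restriction from the order-$q$ group to its subgroup of order $N$ preserves these decompositions. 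The numbers of $V_N$ summands are then pinned down by dimension counting.

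For part (1), apply Theorem~\ref{RenaudThm} to the pair $(R,S)$ itself. The integer $n'$ for this pair satisfies $n'<n$, so $\max(R,S)<p^{n'+1}\le N$. By Theorem~\ref{RenaudThm}, $\lambda_\ell=p^{n'+1}$ exactly for $\ell\le R+S-p^{n'+1}$, and every later $\lambda_\ell$ is strictly smaller. However, $p^{n'+1}$ may be less than $N$, so a direct reading does not settle the count. The cleaner route is to count the projective ($V_N$) summands of $V_R\otimes V_S$ directly. I would use the known fact that the number of $V_N$ summands in $V_i\otimes V_j$, with $i,j\le N$, is $\max(i+j-N,0)$. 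To prove this, compute $\dim\ker(g-1)^{N-1}$, or equivalently the rank of $(g-1)^{N-1}$ on $V_i\otimes V_j$ using the basis $\mathcal{B}$: $(g-1)^{N-1}$ sends $v_{a,b}$ to a combination supported on $a+b-N$ diagonals. Alternatively, argue by induction via $\Omega$. This shows that exactly $\max(e_n,0)$ of the $\mu$'s equal $N$ and the rest are less than $N$, which is (1).

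For part (2), note $V_{N-R}\otimes V_{N-S}\cong\Omega^2(V_R\otimes V_S)\oplus P$ with $P$ projective. Since $\Omega^2$ fixes non-projective indecomposables here ($V_{N-(N-\mu)}=V_\mu$), the non-projective part is $V_{\mu_{\max(e_n,0)+1}}\oplus\dots\oplus V_{\mu_m}$. The number of $V_N$ summands is $\max((N-R)+(N-S)-N,0)=\max(-e_n,0)$. Parts (3) and (4) follow the same way using a single $\Omega$: the non-projective parts become $V_{N-\mu_k}$ for $k>\max(e_n,0)$, listed in increasing $\mu$ order so that the dimensions are nonincreasing. The projective counts are $\max(N-R+S-N,0)=\max(f_n,0)$ and $\max(-f_n,0)$. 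As a check, the dimensions add up; for example, in (3), $(N-R)S=f_n^+N+\sum_{k>e_n^+}(N-\mu_k)$ should follow from $RS=e_n^+N+\sum_{k>e_n^+}\mu_k$ together with $\min(N-R,S)=f_n^++(m-e_n^+)$.

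The main obstacle is establishing the two tools cleanly inside the paper's framework: the $\Omega$ relation for tensor products, namely that the kernel of $V_N\otimes V_j\to V_i\otimes V_j$ is $V_{N-i}\otimes V_j$ and splits off projectives correctly, and the count $\max(i+j-N,0)$ of projective summands. If citing Green~\cite{Green1962} is acceptable, I would cite both facts. Otherwise, I would verify the projective count by the rank computation of $(g-1)^{N-1}$. I would also double-check the indexing claim in (1), that exactly the first $\max(e_n,0)$ of the $\mu$'s equal $N$; this is immediate once the count is known, since the $\mu$'s are sorted.
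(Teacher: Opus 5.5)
Your argument is correct, but it takes a different route from the paper. The paper gets (1) and (2) by citing \cite[Corollary 1]{B2011_0}. That result supplies the explicit relation $V_{R_{n-1}}\otimes V_{S_{n-1}}\cong e_n\cdot V_{p^n}\oplus V_{p^n-R_{n-1}}\otimes V_{p^n-S_{n-1}}$ when $e_n>0$, and the reverse relation when $e_n\le 0$. It also gives the bound that summands of $V_a\otimes V_b$ have dimension at most $a+b-1$. The paper then quotes Green's formula (2.5a) or \cite[Proposition 1]{GPX2} for (3), and obtains (4) by symmetry.

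You instead derive all four parts from one mechanism:
\begin{enumerate}
\item Schanuel's lemma applied to $0\to V_{N-i}\to V_N\to V_i\to 0$ tensored with $V_j$;
\item the fact that $\Omega$ interchanges $V_\mu$ and $V_{N-\mu}$ for $\mu<N$, so $\Omega^2$ fixes the non-projective part;
\item the count $\max(i+j-N,0)$ of $V_N$ summands.
\end{enumerate}
In effect you reprove the cited decomposition formulas. This makes the lemma self-contained and shows why the same $\mu_k$ reappear in all four products. The paper's proof is shorter but rests entirely on external results.

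Two points need repair, neither serious.

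First, the passage to a group of order $N=p^n$ is not by restriction to the subgroup of order $N$. Restricting to $\langle g^{q/N}\rangle$ destroys the modules $V_i$: for instance, if $q=p^2$ and $N=p$, then $g^p$ acts trivially on $V_p$. The correct reason is that $g^N-1=(g-1)^N$ in characteristic $p$. This vanishes on $V_i$ for $i\le N$, and hence on every tensor product here, where $g^N$ acts as $g^N\otimes g^N$. So all modules involved are inflated from the cyclic quotient $G/\langle g^N\rangle$ of order $N$, and inflation preserves decompositions. Only over that quotient is $V_N$ projective with $\Omega(V_i)\cong V_{N-i}$.

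Second, your rank sketch for the projective count is garbled: $(g-1)^{N-1}$ maps $D_d$ into the single diagonal $D_{d-N+1}$. The $\Omega$ route you mention closes the gap:
\begin{itemize}
\item If $i+j\le N$, then $(g-1)^{i+j-1}=0$ on $V_i\otimes V_j$ because $g-1$ lowers the diagonal index, so no $V_N$ occurs.
\item If $i+j>N$ with $j\le i<N$, then $V_{N-i}\otimes V_j$ has no projective summand and has $\min(N-i,j)=N-i$ summands. So $\Omega(V_{N-i}\otimes V_j)$ has dimension $(N-i)N-(N-i)j=(N-i)(N-j)$. Hence the projective part $Q$ of $V_i\otimes V_j\cong\Omega(V_{N-i}\otimes V_j)\oplus Q$ has dimension $ij-(N-i)(N-j)=N(i+j-N)$.
\end{itemize}
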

\begin{proof}
From~\cite[Corollary 1 part (1)]{B2011_0} it follows that if $e_n >0$, then
\[V_{R_{n-1}} \otimes V_{S_{n-1}} \cong (e_n) \cdot V_{p^n} \oplus 
V_{p^n-R_{n-1}} \otimes V_{p^n-S_{n-1}}\]
and $\mu_{e_n+1}\leq p^n-R_{n-1}+p^n-S_{n-1}-1 <p^n$,
whereas if $e_n \leq 0$,
\[V_{p^n-R_{n-1}} \otimes V_{p^n-S_{n-1}}\cong (-e_n) \cdot V_{p^n} \oplus 
V_{R_{n-1}} \otimes V_{S_{n-1}}\]
and $\mu_1 \leq R_{n-1}+S_{n-1}-1<p^n$.  We have proved (1) and (2).

The result for $
 V_{p^n-R_{n-1}} \otimes V_{S_{n-1}}$ follows from~\cite[(2.5a)]{Green1962} or more carefully from~\cite[Proposition 1]{GPX2} proving (3).  Finally (4) follows from (1)
because $V_{R_{n-1}} \otimes V_{p^n-S_{n-1}}$ is related to $V_{p^n-R_{n-1}} \otimes V_{S_{n-1}}$ as $ V_{p^n-R_{n-1}} \otimes V_{p^n-S_{n-1}}$ is to $V_{R_{n-1}} \otimes V_{S_{n-1}}$.
\end{proof}

\textbf{Notation:} With $1 \leq k \leq m_{n-1}$, let $k_0=\min\{i \mid \mu_i=\mu_k\}$,  $k_\infty=\max\{i \mid \mu_i=\mu_k\}$, and $k'=k_0+k_\infty-k$.  Note that $k_0 \leq k' \leq k_\infty$.  

\begin{lemma}\label{Multiplicity}
Let  $1 \leq k \leq m_{n-1}$.  
Then $\mu_k=R_{n-1}+S_{n-1}-k_0-k_\infty+1$.
\end{lemma}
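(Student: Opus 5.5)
This is the same multiplicity formula used in the proof of Lemma~\ref{Unique}, now applied to the pair $(R_{n-1},S_{n-1})$ instead of $(r,s)$. I would derive it from the same source, \cite[Proposition 2]{B2015} or \cite[Theorem 5]{GPX2}.

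Write $R=R_{n-1}$, $S=S_{n-1}$, $m=m_{n-1}$, and decompose
\[V_R\otimes V_S\cong V_{\mu_1}\oplus\dots\oplus V_{\mu_m},\qquad \mu_1\ge\dots\ge\mu_m .\]
The cited result says the following for any such decomposition of a tensor product $V_r\otimes V_s$ with nonincreasing summands. If $\ell_0$ is the first index of a block of equal dimensions and $\ell_\infty$ is the last, then
\[\lambda_{\ell_0}=r+s-2(\ell_0-1)-(\ell_\infty-(\ell_0-1)).\]
In words, once the $\ell_0-1$ larger summands are accounted for, the next dimension equals the remaining "room" $r+s-2(\ell_0-1)$ minus the multiplicity of that dimension.

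The steps are as follows.
\begin{enumerate}
\item Apply that formula with $(r,s)$ replaced by $(R,S)$. This is legitimate because $m>0$ in this section, so both $R$ and $S$ are positive and $V_R\otimes V_S$ is an honest tensor product of indecomposable $FG$-modules. The group order $q=p^a$ is at least $p^n>\max(R,S)$, so $V_R$ and $V_S$ are indeed defined.
\item The indices $k_0$ and $k_\infty$ play exactly the roles of $\ell_0$ and $\ell_\infty$.
\item Since $\mu_k=\mu_{k_0}$ by the definition of $k_0$, the formula gives
\[\mu_k=R+S-2(k_0-1)-(k_\infty-k_0+1)=R+S-k_0-k_\infty+1.\]
\end{enumerate}

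The only step needing any care is checking that the cited proposition is stated for arbitrary $r,s$. If it were stated only with hypotheses tied to $n$, one would need to verify that $V_R\otimes V_S$ satisfies them. It should, since the result is a general statement about Jordan types of tensor products of unipotent Jordan blocks.

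If a self-contained argument were wanted instead, I would count dimensions of kernels of powers of $g-1$ on the diagonal pieces $D_d$ of $V_R\otimes V_S$. The injectivity and invertibility facts from \cite{IandI2009} used in Lemma~\ref{Unique} show that the number of summands of dimension at least $\mu$ is $\lfloor (R+S-\mu+1)/2\rfloor$ adjusted by the multiplicity block. I expect the citation route to suffice, so I would not pursue this.
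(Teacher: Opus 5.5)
Your proposal is correct and matches the paper's proof: the paper also applies the multiplicity formula from \cite[Proposition 2]{B2015} or \cite[Theorem 5]{GPX2} to $V_{R_{n-1}}\otimes V_{S_{n-1}}$, with $k_0,k_\infty$ playing the roles of $\ell_0,\ell_\infty$, and simplifies $R_{n-1}+S_{n-1}-2(k_0-1)-(k_\infty-(k_0-1))$. Your extra checks that $R_{n-1},S_{n-1}>0$ and $\max(R_{n-1},S_{n-1})<p^n\le q$ are harmless additions the paper leaves implicit.
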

\begin{proof}
By~\cite[Proposition 2]{B2015} or~\cite[Theorem 5]{GPX2},
\begin{align*}
\mu_k&=R_{n-1}+S_{n-1}-2(k_0-1)-(k_\infty-(k_0-1))\\
&=R_{n-1}+S_{n-1}-k_0-k_\infty+1.
\end{align*}
\end{proof}

\begin{lemma}\label{SsubN-1}
$\max(f_n,0)+m_{n-1}=S_{n-1}$.
\end{lemma}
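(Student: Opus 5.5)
The identity $\max(f_n,0)+m_{n-1}=S_{n-1}$ is elementary, and I would prove it by splitting on the sign of $f_n=S_{n-1}-R_{n-1}$, using only the definitions $m_{n-1}=\min(R_{n-1},S_{n-1})$ and $M_{n-1}=\max(R_{n-1},S_{n-1})$.

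\emph{Case $f_n\ge 0$.} Here $S_{n-1}\ge R_{n-1}$, so $m_{n-1}=R_{n-1}$ and $\max(f_n,0)=S_{n-1}-R_{n-1}$. Adding these gives $\max(f_n,0)+m_{n-1}=S_{n-1}$.

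\emph{Case $f_n<0$.} Here $S_{n-1}<R_{n-1}$, so $m_{n-1}=S_{n-1}$ and $\max(f_n,0)=0$. The sum is again $S_{n-1}$.

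No earlier result is needed and there is no real obstacle. The role of the lemma is bookkeeping: by Lemma~\ref{Tensors}(3), $V_{p^n-R_{n-1}}\otimes V_{S_{n-1}}$ has exactly $\max(f_n,0)$ components $V_{p^n}$ followed by the components $V_{p^n-\mu_i}$ for $\max(e_n,0)+1\le i\le m_{n-1}$. The lemma confirms that the count of indecomposable summands there equals $\min(p^n-R_{n-1},S_{n-1})$, as it must. Indeed $m_{n-1}+M_{n-1}=R_{n-1}+S_{n-1}$, so when $e_n\le 0$ the summand count is $\max(f_n,0)+m_{n-1}=S_{n-1}$, consistent with $S_{n-1}\le p^n-R_{n-1}$.
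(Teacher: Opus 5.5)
Your proof is correct and is the same as the paper's: the paper also splits into $R_{n-1}\le S_{n-1}$ (your $f_n\ge 0$) and $R_{n-1}>S_{n-1}$ (your $f_n<0$) and adds the two pieces directly. Your closing remark, that the lemma makes the summand count in Lemma~\ref{Tensors}(3) agree with $\min(p^n-R_{n-1},S_{n-1})$, is also correct.
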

\begin{proof}
If $R_{n-1} \leq S_{n-1}$, $\max(f_n,0)+m_{n-1}=(S_{n-1}-R_{n-1})+R_{n-1}=S_{n-1}$,
while if $R_{n-1} >S_{n-1}$, $\max(f_n,0)+m_{n-1}=0+S_{n-1}=S_{n-1}$.
\end{proof}

 \begin{lemma}\label{nuTomu}
When $\max(e_n,0)+1 \leq k \leq m_{n-1}$ and
\[V_{p^n-R_{n-1}} \otimes V_{S_{n-1}}  
\cong \bigoplus_{i=1}^{\min(p^n-R_{n-1},S_{n-1})} V_{\nu_i}\]
where $\nu_1 \geq \nu_2 \geq \dots \geq \nu_{\min(p^n-R_{n-1},S_{n-1})}>0$,
then $\nu_{S_{n-1}+1-k}=p^n-\mu_k$. 
 \end{lemma}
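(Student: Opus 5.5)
The plan is to read off the ordered list $(\nu_i)$ directly from Lemma~\ref{Tensors}(3) and then match indices using Lemma~\ref{SsubN-1}. By Lemma~\ref{Tensors}(3),
\[V_{p^n-R_{n-1}} \otimes V_{S_{n-1}}\cong \max(f_n,0)\cdot V_{p^n}\oplus V_{p^n-\mu_{m_{n-1}}} \oplus \dots \oplus V_{p^n-\mu_{\max(e_n,0)+1}}.\]
Since the $\mu_i$ are nonincreasing, the dimensions $p^n-\mu_{m_{n-1}}\geq \dots \geq p^n-\mu_{\max(e_n,0)+1}$ are nonincreasing. Each of them is at most $p^n-1<p^n$, because every $\mu_i \geq 1$. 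Hence the $\max(f_n,0)$ copies of $V_{p^n}$ come first in the nonincreasing ordering. Because decompositions into indecomposables are unique up to isomorphism (Krull--Schmidt), the nonincreasing sequence $(\nu_i)$ is forced. It is
\[\nu_i=p^n \quad (1\leq i\leq \max(f_n,0)),\qquad \nu_{\max(f_n,0)+j}=p^n-\mu_{m_{n-1}+1-j}\quad (1\leq j\leq m_{n-1}-\max(e_n,0)).\]

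Now take $k$ with $\max(e_n,0)+1\leq k\leq m_{n-1}$ and set $j=m_{n-1}+1-k$. Then $1\leq j\leq m_{n-1}-\max(e_n,0)$, so $j$ lies in the admissible range. By Lemma~\ref{SsubN-1}, $\max(f_n,0)+j=\max(f_n,0)+m_{n-1}+1-k=S_{n-1}+1-k$. Therefore $\nu_{S_{n-1}+1-k}=p^n-\mu_k$, which is the claim.

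As a consistency check, the total count of summands should be $\min(p^n-R_{n-1},S_{n-1})=\max(f_n,0)+m_{n-1}-\max(e_n,0)$. This can be verified by cases on whether $R_{n-1}\le S_{n-1}$ and whether $e_n>0$, but it is only needed to confirm that the indexing is consistent.

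The only delicate point is the ordering step. We must make sure no $p^n-\mu_i$ equals $p^n$, which holds because every $\mu_i$ is positive, so that the block of $V_{p^n}$'s genuinely occupies the first $\max(f_n,0)$ positions.
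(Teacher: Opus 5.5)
Your proposal is correct and is essentially the paper's argument: the paper likewise reads off from Lemma~\ref{Tensors}(3) that $p^n-\mu_k=\nu_{\max(f_n,0)+m_{n-1}+1-k}$ and then applies Lemma~\ref{SsubN-1} to rewrite the index as $S_{n-1}+1-k$. You additionally spell out the ordering step that the paper leaves implicit, namely that each $p^n-\mu_i<p^n$, so the $\max(f_n,0)$ copies of $V_{p^n}$ occupy the first positions.
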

\begin{proof}
By Lemma~\ref{Tensors}, $p^n-\mu_k=\nu_{\max(f_n,0)+m_{n-1}+1-k}$.   Since
$\max(f_n,0)+m_{n-1}=S_{n-1}$ by Lemma~\ref{SsubN-1}, the result follows.
\end{proof}

\begin{theorem}\label{ZThm}
Assume that $\max(e_n,0)+1 \leq k \leq m_{n-1}$ and  that $A=J(R_{n-1},S_{n-1})$ and $D=J(p^n-R_{n-1},p^n-S_{n-1})$ are known.  For a nonnegative integer $i$ and a positive integer $z$ define
\[A_i(z)=\sum_{j=1}^k a_{k+1-j,j}\binom{\mu_k-1}{i p^n+R_{n-1}-k+j-z},\]
and
\[D_i(z)=\sum_{j=1}^{k-e_n}d_{k-e_n+1-j,j}\binom{\mu_k-1}{i p^n+R_{n-1}+S_{n-1}+j-k-z}.\]
For a nonnegative integer $t$ define 
\[f_t(z)=\sum_{i=0}^t (-1)^i A_i(z) +(-1)^{R_{n-1}}\sum_{i=0}^{t}(-1)^i D_i(z).\]
Then $f_t(z)=(-1)^{z-1}$ when $1 \leq z \leq t p^n+p^n-S_{n-1}+k'$.
\end{theorem}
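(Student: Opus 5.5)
The natural first move is to reduce everything to the single function
\[
h(z)=A_0(z)+(-1)^{R_{n-1}}D_0(z).
\]
Since $A_i(z)=A_0(z-ip^n)$ and $D_i(z)=D_0(z-ip^n)$, we have $f_t(z)=\sum_{i=0}^t(-1)^i h(z-ip^n)$. The plan is to determine $h$ on a long enough range of integers and then sum the shifted copies. Both $A_0$ and $D_0$ are read as coefficient functions of images under $(g-1)^{\mu_k-1}$, interpreted for every integer $z$ (formally, ignoring whether $v_{z,\cdot}$ exists).

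\textbf{Step 1: the $A$-part.} For the pair $(R_{n-1},S_{n-1})$, the index $k$ has $\lambda=\mu_k$. By Lemma~\ref{Unique} and Lemma~\ref{Mult} applied to $A=J(R_{n-1},S_{n-1})$, the $k$-th anti-diagonal satisfies $(g-1)^{\mu_k-1}(y_k)=x_{R_{n-1}+S_{n-1}+1-k-\mu_k}$. Reading coefficients as in the proof of Lemma~\ref{Mult} gives $A_0(z)=(-1)^{z-1}$ for $1\le z\le R_{n-1}+S_{n-1}+1-k-\mu_k$. By Lemma~\ref{Multiplicity} this upper bound is $k_0+k_\infty-k=k'$.

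\textbf{Step 2: the $D$-part.} For the pair $(p^n-R_{n-1},p^n-S_{n-1})$, Lemma~\ref{Tensors}(1),(2) shows that the component with index $k-e_n$ has dimension $\mu_k$: the $\max(\pm e_n,0)$ copies of $V_{p^n}$ shift the indexing by exactly $-e_n$. Using $p^n-R_{n-1}-(k-e_n)=S_{n-1}-k$, one gets $D_0(z)=D^\ast(z-R_{n-1})$, where $D^\ast$ is the analogous coefficient function for $D$. Hence $(-1)^{R_{n-1}}D_0(z)=(-1)^{z-1}$ for
\[
R_{n-1}+1\le z\le R_{n-1}+\bigl(2p^n-R_{n-1}-S_{n-1}+1-(k-e_n)-\mu_k\bigr)=p^n-S_{n-1}+k'.
\]
Here we again use Lemma~\ref{Multiplicity}.

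\textbf{Step 3: complementary vanishing.} It remains to show that $D_0(z)=0$ for $1\le z\le R_{n-1}$ and $A_0(z)=0$ for $k'<z\le p^n-S_{n-1}+k'$. For the $D$-vanishing, note that for $z\le R_{n-1}$ every binomial argument in $D_0$ is at least $S_{n-1}-k+1$. For the $A$-vanishing, the binomial arguments become negative once $z$ exceeds $R_{n-1}-k+j$, so the problem reduces to the window $k'<z\le R_{n-1}$. In that window we need a genuine argument. The plan is to interpret the remaining coefficients as coefficients of basis vectors $v_{z,w}$ with $w\le 0$. Then use the fact that $(g-1)^{\mu_k}(y_k)=0$ in $V_{R_{n-1}}\otimes V_{S_{n-1}}$. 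The kernel description $\{v\in D_\ell:(g-1)v=0\}=Fx_\ell$ and Pascal's rule should then propagate the alternating pattern, or zero, past the boundary. Combined with Steps 1 and 2, this gives $h(z)=(-1)^{z-1}$ on $[1,p^n-S_{n-1}+k']$.

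\textbf{Step 4: induction on $t$.} Since $\mu_k-1<p^n$ by Lemma~\ref{Tensors}(1), a Lucas-type argument as in Section~\ref{Calc} should show two things: $h(w)=0$ for $w$ in the relevant nonpositive range, and on $(p^n-S_{n-1}+k',\,p^n+p^n-S_{n-1}+k']$ the function satisfies $h(w)=(-1)^{w-1}+h(w-p^n)$. This makes $\sum_i(-1)^ih(z-ip^n)$ telescope. The sign bookkeeping uses that $p^n$ is odd when $p$ is odd, and that signs are trivial when $p=2$. The step I expect to be the main obstacle is Step 3 together with this boundary behaviour of $h$ beyond $p^n-S_{n-1}+k'$: it is exactly where the two recursive inputs $A$ and $D$ must interlock. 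If the direct argument fails, I would try realizing $y_k^A$ and $y^D_{k-e_n}$ together inside $V_{p^n}\otimes V_{p^n}$ and exploiting the freeness of that module.
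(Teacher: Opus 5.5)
\textbf{There is a genuine gap: Step 3 is false, and it is exactly where the content of the theorem lies.}

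Your reduction to $h(z)=A_0(z)+(-1)^{R_{n-1}}D_0(z)$, using $A_i(z)=A_0(z-ip^n)$ and $D_i(z)=D_0(z-ip^n)$, is fine. Steps 1 and 2 are also fine. They evaluate $A_0$ on $[1,k']$ and $(-1)^{R_{n-1}}D_0$ on $[R_{n-1}+1,p^n-S_{n-1}+k']$; on those ranges the other term vanishes by Lemma~\ref{Reduction}.

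The problem is the window $k'<z\le R_{n-1}$. There, neither $A_0$ nor $D_0$ vanishes in general; only their signed sum equals $(-1)^{z-1}$. Your bound for the $D$-vanishing needs $S_{n-1}-k+1>\mu_k-1$. By Lemma~\ref{Multiplicity} this says $k'\ge R_{n-1}$, i.e.\ that the window is empty. Moreover, if both of your vanishing claims held, $h$ would be $0$ on $[k'+1,R_{n-1}]$, contradicting the target.

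Concretely, take $p=5$, $n=1$ (e.g.\ $r=s=7$), $R_{n-1}=S_{n-1}=2$, $k=1$. Then $\mu_1=3$, $k'=1$, $e_n=-1$ and $a_{1,1}=2^{-1}=3$. The second anti-diagonal of $J(3,3)$ is forced by Lemma~\ref{Unique}: $d_{2,1}=1$, $d_{1,2}=-1$. This gives $A_0(2)=3$ and $D_0(2)=1$, while $h(2)=4=-1$.

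Step 4 has the same defect. On $(p^n-S_{n-1}+k',p^n]$ your recurrence is exactly $(-1)^{R_{n-1}}D_0(z)-A_1(z)=(-1)^{z-1}$. This is again an interlocking of the two recursive inputs; in the example it requires $h(0)=A_0(0)=3\neq 0$. No Lucas argument based on $\mu_k-1<p^n$ produces it.

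\textbf{What is missing} is a single element whose two ends are the $A$- and $D$-anti-diagonals. The paper builds it as follows.
\begin{enumerate}
\item Take the generator $w_{S_{n-1}+1-k'}$ of the summand $V_{p^n-\mu_k}$ of $V_{p^n-R_{n-1}}\otimes V_{S_{n-1}}$ (Lemma~\ref{nuTomu}), and copy its coefficients into $V_{p^n}\otimes V_{p^n}$.
\item Since $w_\ell\notin(g-1)(D_{p^n-R_{n-1}+S_{n-1}-\ell+1})$ and $V_{p^n}\otimes V_{p^n}$ is free, $(g-1)^{p^n-1}$ of the copy is a nonzero multiple of $x_{p^n+1-\ell}$, so the copy can be normalized.
\item Applying $(g-1)^{p^n-\mu_k}$ gives $z_k$. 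Its coefficients on $v_{R_{n-1}+1,p^n-k},\dots,v_{R_{n-1}+S_{n-1}-k,p^n+1-S_{n-1}}$ vanish because $(g-1)^{p^n-\mu_k}(w_{S_{n-1}+1-k'})=0$ in the smaller module.
\item So $z_k$ splits into a $k$-term block and a $(k-e_n)$-term block, with $(g-1)^{\mu_k-1}(z_k)=x_{p^n-S_{n-1}+k'}$.
\item For $k=k_0$, Lemma~\ref{Unique}, applied in $V_{R_{n-1}}\otimes V_{S_{n-1}}$ and in $V_{p^n-R_{n-1}}\otimes V_{p^n-S_{n-1}}$, identifies the blocks as $a_{k+1-j,j}$ and $(-1)^{R_{n-1}}d_{k-e_n+1-j,j}$. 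Reading off coefficients of $(g-1)^{\mu_k-1}(z_{k_0})$ gives \eqref{Eq1} on the whole range.
\item The shift argument of Lemma~\ref{Mult} handles $k>k_0$.
\item \eqref{Eq2} follows by applying \eqref{Eq1} to the complementary data $(p^n-R_{n-1},p^n-S_{n-1},k-e_n)$, which swaps the roles of $A$ and $D$.
\end{enumerate}
Your closing remark points toward $V_{p^n}\otimes V_{p^n}$, but none of these ingredients is in the proposal. Those are the choice of element, the non-vanishing, the middle-block zeros, the uniqueness identification with the sign $(-1)^{R_{n-1}}$, and the symmetry argument for the boundary.
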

Clearly $A_i$, $D_i$, and $f_t$ depend on $p^n$, $R_{n-1}$, $S_{n-1}$, and $k$, but we want to keep the notation simple.   We will prove Theorem~\ref{ZThm} in a series of lemmas.

\begin{lemma}\label{Reduction}
When $i p^n+1 \leq z \leq (i+1) p^n$, only $A_i(z)$, $D_i(z)$, and $A_{i+1}(z)$ contribute to $f_t(z)$.  Specifically we will show that
\[f_t(z)=
\begin{cases}
(-1)^i A_i(z), & \text{$i p^n +1 \leq z \leq i p^n+k'$,}\\
(-1)^i\left(A_i(z)+(-1)^{R_{n-1}}D_i(z)\right), &\text{$i p^n+ k'+1 \leq z \leq i p^n+R_{n-1}$,}\\
(-1)^i(-1)^{R_{n-1}}D_i(z), & \text{$i p^n +R_{n-1}+1 \leq z \leq ip^n+a_1$,}\\
(-1)^i\left((-1)^{R_{n-1}}D_i(z)-A_{i+1}(z)\right), &\text{$i p^n+a_1+1 \leq z\leq (i+1)p^n$}
\end{cases}
\]
where  $a_1=p^n-S_{n-1}+k'$.
\end{lemma}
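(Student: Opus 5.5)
The plan is to track the support of each summand of $f_t(z)$: each binomial $\binom{\mu_k-1}{m}$ is nonzero in $F$ only if $0\le m\le \mu_k-1$. So for each $i$ I will find an interval of $z$ outside which $A_i(z)$ and $D_i(z)$ vanish identically. Intersecting these intervals with the window $ip^n+1\le z\le (i+1)p^n$ should give exactly the four cases in the statement. Throughout, $0\le i\le t$, and the fourth case assumes $i+1\le t$, since otherwise the term $A_{i+1}$ does not appear in $f_t$.

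\textbf{Support of $A_i$.} By Lemma~\ref{Mult} applied to $A=J(R_{n-1},S_{n-1})$, the coefficient $a_{k+1-j,j}$ is zero for $j>k_0$. So in $A_i(z)$ only $1\le j\le k_0$ contribute, and the lower argument $m=ip^n+R_{n-1}-k+j-z$ ranges over $[ip^n+R_{n-1}-k+1-z,\ ip^n+R_{n-1}-k+k_0-z]$.
\begin{itemize}
\item Nonvanishing requires the top end to be $\ge 0$, which gives $z\le ip^n+R_{n-1}-(k-k_0)\le ip^n+R_{n-1}$.
\item It also requires the bottom end to be $\le\mu_k-1$. Substituting $\mu_k=R_{n-1}+S_{n-1}-k_0-k_\infty+1$ from Lemma~\ref{Multiplicity}, this becomes $z\ge ip^n-S_{n-1}+k'+1=(i-1)p^n+a_1+1$.
\end{itemize}
Hence $A_i$ is supported in $[(i-1)p^n+a_1+1,\ ip^n+R_{n-1}]$.

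\textbf{Support of $D_i$.} Here I only need the crude range $1\le j\le k-e_n$, with $m=ip^n+R_{n-1}+S_{n-1}+j-k-z$.
\begin{itemize}
\item The condition $m\ge 0$ for $j=k-e_n$ gives $z\le ip^n+R_{n-1}+S_{n-1}-e_n=(i+1)p^n$.
\item The condition $m\le \mu_k-1$ for $j=1$ gives $z\ge ip^n+R_{n-1}+S_{n-1}+2-k-\mu_k=ip^n+k'+1$.
\end{itemize}
Hence $D_i$ is supported in $[ip^n+k'+1,\ (i+1)p^n]$.

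\textbf{Which terms survive in the window.} In the window $[ip^n+1,(i+1)p^n]$:
\begin{itemize}
\item $A_{i-1}$ vanishes, since its support ends at $(i-1)p^n+R_{n-1}<ip^n+1$.
\item $A_{i+j}$ for $j\ge2$ vanishes, since its support starts beyond $(i+1)p^n$ because $a_1\ge 0$.
\item $D_{i\pm1}$ and the other $D$'s vanish for the same kind of reason.
\end{itemize}
So only $A_i$, $D_i$, $A_{i+1}$ remain, with signs $(-1)^i$, $(-1)^{R_{n-1}}(-1)^i$, and $(-1)^{i+1}=-(-1)^i$ respectively.

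\textbf{Ordering the breakpoints.} It remains to order the breakpoints $ip^n+k'$, $ip^n+R_{n-1}$, $ip^n+a_1$. I need the chain
\[k'\le R_{n-1}\le a_1=p^n-S_{n-1}+k'.\]
\begin{itemize}
\item The first inequality holds because $k'\le k_\infty\le m_{n-1}\le R_{n-1}$.
\item The second is equivalent to $k'\ge e_n$.
\end{itemize}
This is the step needing care. By Lemma~\ref{Tensors}(1), $\mu_1=\dots=\mu_{\max(e_n,0)}=p^n>\mu_{\max(e_n,0)+1}$. Since $k\ge \max(e_n,0)+1$, it follows that $k_0\ge\max(e_n,0)+1$, so $k'\ge k_0>e_n$. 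With this ordering the window splits into four intervals:
\begin{itemize}
\item $[ip^n+1,\ ip^n+k']$, where only $A_i$ is alive;
\item $[ip^n+k'+1,\ ip^n+R_{n-1}]$, where $A_i$ and $D_i$ are alive;
\item $[ip^n+R_{n-1}+1,\ ip^n+a_1]$, where only $D_i$ is alive;
\item $[ip^n+a_1+1,\ (i+1)p^n]$, where $D_i$ and $A_{i+1}$ are alive.
\end{itemize}
These are exactly the four displayed cases. The main potential obstacle is ensuring that the supports are not spoiled at the edges. I expect the inequality $k'>e_n$, which relies on $\mu_{\max(e_n,0)+1}<p^n$ from Lemma~\ref{Tensors}, to be the key input.
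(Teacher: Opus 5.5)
Your proposal is correct and follows essentially the same route as the paper. Both arguments locate where each $A_i$ and $D_i$ must vanish by bounding the lower index of $\binom{\mu_k-1}{\cdot}$ using $\mu_k=R_{n-1}+S_{n-1}-k_0-k_\infty+1$, and your thresholds coincide with the paper's; beyond that, you also verify the breakpoint ordering $k'\le R_{n-1}\le a_1$ (via $k'\ge k_0>e_n$ from Lemma~\ref{Tensors}), which the paper leaves implicit in ``follows,'' though you should add the easy $a_1\le p^n$ from $k'\le S_{n-1}$, and your appeal to Lemma~\ref{Mult} is unnecessary since the endpoints $j=1$ and $j\le k$ already give the bounds for $A_i$.
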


\begin{proof}
First we show that $A_{i+1}(z)=0$ if $z \leq  i p^n+p^n-S_{n-1}+k'$, $D_i(z)=0$ if $z \leq i p^n+k' $ or $z >(i+1) p^n$, and $A_i(z)=0$ if $z>i p^n+R_{n-1}$.

Suppose that $z \leq  i p^n+p^n-S_{n-1}+k'$.  Then 
\begin{align*}
(i+1)&p^n+R_{n-1}-k+j-z\\
& \geq (i+1)p^n+R_{n-1}-k+j-(i p^n+p^n-S_{n-1}+k')\\
&=R_{n-1}+S_{n-1}-k_0-k_\infty +j\\
&=\mu_k+j-1\\
&>\mu_k-1.
\end{align*}
Hence $A_{i+1}(z)=0$.  Clearly if $\ell>i+1$, $A_{\ell}(z)=0$.

Suppose that $z \leq i p^n+k' $.  Then
\begin{align*}
i p^n+R_{n-1}+S_{n-1}+j-k-z
&\geq i p^n+R_{n-1}+S_{n-1}+j-k-(i p^n+ k')\\
&=\mu_k+j-1\\
&>\mu_k-1.
\end{align*}
Hence $D_i(z)=0$.  Clearly if $\ell>i$,  $D_{\ell}(z)=0$.

Suppose that $z>(i+1)p^n$.  Then since $j \leq k+p^n-R_{n-1}-S_{n-1}$
\begin{align*}
i p^n&+R_{n-1}+S_{n-1}+j-k-z\\
&\leq i p^n+R_{n-1}+S_{n-1}+(p^n+k-R_{n-1}-S_{n-1})-k-z\\
&=(i+1)p^n-z\\
&<0.
\end{align*}
Hence $D_i(z)=0$.  Clearly if $\ell<i$, $D_{\ell}(z)=0$.

Suppose that $z>ip^n+R_{n-1}$.  Then since $j \leq k$
\begin{align*}
i p^n&+R_{n-1}+j-k-z\\
&=(i p^n+R_{n-1}-z)+(j-k)\\
&\leq i p^n+R_{n-1}-z\\
&<0.
\end{align*}
Hence $A_i(z)=0$.  Clearly if $\ell<i$, $A_\ell(z)=0$.

The detailed behavior of $f_t$ when $i p^n+1 \leq z \leq (i+1) p^n$ follows.
\end{proof}

\begin{corollary} \label{period}
When $1 \leq z \leq t p^n-S_{n-1}+k'$, $f_t(z+p^n)=-f_t(z)$.
\end{corollary}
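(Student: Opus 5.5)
Apply Lemma~\ref{Reduction} twice: once at $z$ and once at $z+p^n$. The basic observation is a shift identity: replacing $z$ by $z+p^n$ and $i$ by $i+1$ leaves every binomial argument unchanged. Indeed, $(i+1)p^n+R_{n-1}-k+j-(z+p^n)=ip^n+R_{n-1}-k+j-z$. Hence
\[A_{i+1}(z+p^n)=A_i(z),\qquad D_{i+1}(z+p^n)=D_i(z)\]
for every nonnegative integer $i$ and every positive $z$.

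Fix $z$ with $1\le z\le tp^n-S_{n-1}+k'$ and choose $i\ge 0$ with $ip^n+1\le z\le (i+1)p^n$. Then $z+p^n$ lies in the window $(i+1)p^n+1\le z+p^n\le (i+2)p^n$. Lemma~\ref{Reduction}, used for index $i+1$, expresses $f_t(z+p^n)$ with the same case split, each case boundary shifted by $p^n$, and with overall sign $(-1)^{i+1}$ in place of $(-1)^i$. Applying the shift identity term by term gives:
\begin{itemize}
\item first case: $(-1)^{i+1}A_{i+1}(z+p^n)=-(-1)^iA_i(z)$;
\item second case: $(-1)^{i+1}\bigl(A_{i+1}+(-1)^{R_{n-1}}D_{i+1}\bigr)(z+p^n)=-(-1)^i\bigl(A_i+(-1)^{R_{n-1}}D_i\bigr)(z)$;
\item third case: handled the same way;
\item fourth case: here we also need $A_{i+2}(z+p^n)=A_{i+1}(z)$, which is the same shift identity.
\end{itemize}
Comparing with Lemma~\ref{Reduction} at $z$ for index $i$ yields $f_t(z+p^n)=-f_t(z)$.

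The one point needing care is whether Lemma~\ref{Reduction}'s description of $f_t$ is valid at both $z$ and $z+p^n$. The sum defining $f_t$ stops at index $t$, while the reduction silently assumes that every index it uses (up to $i+1$ at $z$, and up to $i+2$ at $z+p^n$) is at most $t$. I would therefore recheck the vanishing statements in the proof of Lemma~\ref{Reduction} against this truncation. For $z+p^n$ the hypothesis gives $z+p^n\le (t+1)p^n-S_{n-1}+k'=tp^n+a_1$.
\begin{itemize}
\item If $i+1\le t-1$, all indices needed at $z+p^n$ are at most $t$, so the full case formula holds.
\item If $i+1=t$, then $z+p^n\le tp^n+a_1$ keeps $z+p^n$ within the first three cases for index $t$. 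So $A_{t+1}$, which is absent from $f_t$, would have vanished anyway by the first vanishing statement in the proof of Lemma~\ref{Reduction}. For the same reason, at $z$ the fourth case involves $A_{i+1}=A_t$, which is present.
\end{itemize}
Thus the truncation at $t$ never removes a term that Lemma~\ref{Reduction} treats as nonzero, and the shift argument goes through.

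This boundary bookkeeping at $i+1=t$ is the main obstacle; the rest is the mechanical index shift.
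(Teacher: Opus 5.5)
Your proposal is correct and takes the same approach as the paper: the paper's proof is just the shift identities $A_{i+1}(z+p^n)=A_i(z)$ and $D_{i+1}(z+p^n)=D_i(z)$, applied through Lemma~\ref{Reduction}. Your truncation check at index $t$ spells out what the paper leaves implicit. One small addition would help: your two cases are exhaustive because $k'\le k_\infty\le m_{n-1}\le S_{n-1}$ forces $z\le tp^n$, so $i+1\le t$, and you should say so.
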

\begin{proof}
Follows because $A_{i+1}(z+p^n)=A_i(z)$ and $D_{i+1}(z+p^n)=D_i(z)$.  For example
\begin{align*}
A_{i+1}(z+p^n)&=\sum_{j=1}^k a_{k+1-j,j}\binom{\mu_k-1}{(i+1) p^n+R_{n-1}-k+j-(z+p^n)}\\
&=\sum_{j=1}^k a_{k+1-j,j}\binom{\mu_k-1}{i p^n+R_{n-1}-k+j-z}\\
&=A_i(z).
\end{align*}
\end{proof}

 Our next two lemmas will conclude the  proof of Theorem~\ref{ZThm}.

\begin{lemma}\label{AiDi}
When $ip^n+1 \leq z \leq (i+1)p^n+R_{n-1}$, 
$(-1)^i (A_i(z)+(-1)^{R_{n-1}}D_i(z))=(-1)^{z-1}$.  Since $D_i(z)=0$ when $z \leq i p^n+k'$ by Lemma~\ref{Reduction}, this implies that $(-1)^i A_i(z)=(-1)^{z-1}$ when $i p^n+1 \leq z \leq i p^n+k'$.
\end{lemma}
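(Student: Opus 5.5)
The plan is first to reduce to $i=0$ and then to recognise $A_0+(-1)^{R_{n-1}}D_0$ as a single coefficient of $(g-1)^{\mu_k-1}$ applied to one element of a larger tensor product, so that the statement becomes an instance of the fact $\{v\in D_d\mid (g-1)v=0\}=Fx_d$ from~\cite[Lemma 3]{Norman2008}. Write $R=R_{n-1}$ and $S=S_{n-1}$.

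\emph{Step 1 (reduction to $i=0$).} As in the proof of Corollary~\ref{period}, $A_i(z)=A_0(z-ip^n)$ and $D_i(z)=D_0(z-ip^n)$. Also $(-1)^{ip^n}=(-1)^i$ when $p$ is odd, and all signs are trivial when $p=2$. Hence it suffices to prove
\[A_0(z)+(-1)^{R}D_0(z)=(-1)^{z-1}\qquad\text{for } 1\le z\le p^n+R.\]

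\emph{Step 2 (the two pieces separately).} Put $y_A=\sum_{j=1}^k a_{k+1-j,j}v_{R-k+j,S+1-j}\in V_R\otimes V_S$. By the definition of $J(R,S)$, $A_0(z)$ is the coefficient of $v_{z,\,R+S+2-k-\mu_k-z}$ in $(g-1)^{\mu_k-1}(y_A)$, so $A_0(z)=(-1)^{z-1}$ for $1\le z\le k'$; here $R+S+1-k-\mu_k=k'$ by Lemma~\ref{Multiplicity}.

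For $D$, Lemma~\ref{Tensors}(2) shows that the index $k-e_n$ in $V_{p^n-R}\otimes V_{p^n-S}$ carries the same dimension $\mu_k$. A short computation using $m_{n-1}+M_{n-1}=R+S$ gives
\[R+S+j-k-z=(p^n-R)-(k-e_n)+j-(z-R).\]
Hence $D_0(z)$ is the coefficient of the basis vector with first index $z-R$ in $(g-1)^{\mu_k-1}(y_D)$, where $y_D$ is the $J(p^n-R,p^n-S)$-generator. So $D_0(z)=(-1)^{z-R-1}$, that is $(-1)^R D_0(z)=(-1)^{z-1}$, on an initial range of $z-R$. Lemma~\ref{Reduction} already gives $D_0(z)=0$ for $z\le k'$ and $A_0(z)=0$ for $z>R$. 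So the identity holds trivially for $z\le k'$ and on the part of $(R,p^n+R]$ covered by the $D$-range. The remaining region is the overlap $k'<z\le R$, together with any tail where neither truncated module constrains the sums.

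\emph{Step 3 (glue in a big module).} To handle that remaining region I would work in $V_N\otimes V_N$ with $N$ large, say $N=2p^n$, and with the same basis $\mathcal B$. Consider
\[Y=y_A+(-1)^R\,\tilde y_D,\]
where $\tilde y_D$ is $y_D$ transported by the index shift found in Step 2, so that it lies in the same $D_d$ as $y_A$. The coefficients of $(g-1)^{\mu_k-1}(Y)$ along the relevant diagonal are exactly $A_0(z)+(-1)^RD_0(z)$. If I can show $(g-1)^{\mu_k}(Y)=0$ in $V_N\otimes V_N$, then~\cite[Lemma 3]{Norman2008} forces $(g-1)^{\mu_k-1}(Y)$ to be a scalar multiple of $x_d$. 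The $z=1$ coefficient equals $A_0(1)=1$, so the scalar is $1$, which gives the claim on the whole range.

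\emph{Main obstacle.} The hard step is verifying $(g-1)^{\mu_k}(Y)=0$. In $V_R\otimes V_S$ we have $(g-1)^{\mu_k}(y_A)=0$ only because terms with second index $\le 0$ are discarded. In $V_N\otimes V_N$ these boundary terms survive, and similarly for $\tilde y_D$. One must show that the boundary terms of $y_A$, which lie just beyond the edge $j\le 0$ on the $S$ side, are cancelled exactly by those of $(-1)^R\tilde y_D$. I would first try to prove this cancellation by computing both boundary contributions as explicit binomial sums. I would then compare them using $\binom{p^n-1}{i}=(-1)^i$ and Lucas's theorem, which is the mechanism by which the complementary pair $(R,S)$, $(p^n-R,p^n-S)$ fits together to fill a full block of length $p^n$. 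If that direct cancellation proves unwieldy, the fallback is a downward induction on $z$. This would use the recursion $\binom{\mu-1}{m}+\binom{\mu-1}{m-1}=\binom{\mu}{m}$ together with the vanishing of $\mu_k$-th differences that holds inside each truncated module.
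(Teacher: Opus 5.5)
Your Steps 1--2 and the idea of gluing the two pieces inside a larger tensor product are on target; the paper also glues in $V_{p^n}\otimes V_{p^n}$. There are two problems, though: your reduced target range is wrong, and the step you flag as the main obstacle is equivalent to the lemma itself.

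\textbf{The range.} The upper limit $(i+1)p^n+R_{n-1}$ in the statement is a misprint for $ip^n+R_{n-1}$. The paper's proof reduces to $1\le z\le R_{n-1}$, which is the second case of Lemma~\ref{Reduction}; the rest of each block is Lemma~\ref{DiAi+1}. Your reduced identity $A_0(z)+(-1)^{R}D_0(z)=(-1)^{z-1}$ is false on much of $[1,p^n+R]$:
\begin{itemize}
\item For $p^n<z\le p^n+R$, both $A_0(z)$ and $D_0(z)$ vanish by the estimates in Lemma~\ref{Reduction}.
\item For $p^n-S+k'<z\le p^n$, Lemma~\ref{DiAi+1} gives the value $(-1)^{z-1}+A_1(z)$, and $A_1(z)\neq 0$ in general. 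For instance, with $p=5$, $n=1$, $R=S=2$, $k=1$ one gets $a_{1,1}=1/2$ and $(d_{2,1},d_{1,2})=(1,-1)$, and the sum at $z=5$ is $-1$.
\end{itemize}
So the ``tail'' in Step 2 must be dropped, and Step 3 cannot deliver ``the whole range''. To put $y_A$ and the shifted $D$-piece into one $D_d$, you must raise the second indices of $y_A$ by $p^n-S$. That places $Y$ in $V_{p^n}\otimes V_{p^n}$ with $d=p^n-S+k'$. A larger $N$ does not help, and $V_{2p^n}$ need not exist when $q=p^n$. Since $k'\ge k_0>e_n$, this diagonal does contain $[1,R]$, which is all that is needed.

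\textbf{The substantive gap.} Write $F(z)$ for the coefficient of $v_{z,d+1-z}$ in $(g-1)^{\mu_k-1}(Y)$. Applying $g-1$ once more shows that $(g-1)^{\mu_k}(Y)=0$ if and only if $F(z)+F(z+1)=0$ for $1\le z<d$. Given what the truncated modules already tell you, that holds if and only if the lemma holds on the overlap $k'<z\le R$. So your reduction is circular, and neither fallback can close it. The entries of $J(R,S)$ and $J(p^n-R,p^n-S)$ are known only as solutions of their defining systems in two separate modules. Those systems say nothing about the boundary terms you want to cancel, so a Lucas computation or a downward induction only re-encodes the same unknown values.

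What is missing is a structural link between the two matrices, which the paper supplies through a third module:
\begin{enumerate}
\item By Lemma~\ref{nuTomu}, the component of $V_{p^n-R}\otimes V_S$ in position $S+1-k'$ is $V_{p^n-\mu_k}$. Copy the coefficients of its generator $w$ into an element $y$ of $V_{p^n}\otimes V_{p^n}$.
\item One has $(g-1)^{p^n-1}(y)\neq 0$ because $w\notin(g-1)(D)$, so $y$ can be scaled so that this image is $x_{p^n-S+k'}$. Set $Y=(g-1)^{p^n-\mu_k}(y)$.
\item Since $V_{p^n}\otimes V_{p^n}$ is free, $(g-1)^{\mu_k}(Y)=(g-1)^{p^n}(y)=0$ and $(g-1)^{\mu_k-1}(Y)=x_{p^n-S+k'}$ come for free.
\item Because $(g-1)^{p^n-\mu_k}(w)=0$, the middle coefficients of $Y$ vanish, so $Y$ splits into an $A$-shaped piece and a $D$-shaped piece.
\item These pieces satisfy the defining systems of $y_A$ and $(-1)^R y_D$, so they equal them by Lemma~\ref{Unique} when $k=k_0$.
\item Lemma~\ref{Mult} then transfers the identity to $k>k_0$.
\end{enumerate}
This construction is what your Step 3 needs in place of a direct cancellation argument.
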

\begin{proof}
Because of Corollary~\ref{period},  it suffices to prove 
\begin{equation}\label{Eq1}
A_0(z)+(-1)^{R_{n-1}}D_0(z)=(-1)^{z-1} \qquad \text{ when }1 \leq z \leq R_{n-1}
\end{equation}
since if $f(z)=(-1)^{z-1}$, then $f(z+i p^n)=(-1)^i f(z)=(-1)^{ip^n}(-1)^{z-1}=(-1)^{z+ip^n-1}$.

Step 1: $V_{p^n-R_{n-1}} \otimes V_{S_{n-1}}$.  By Lemma~\ref{Tensors},
\[V_{p^n-R_{n-1}} \otimes V_{S_{n-1}}\cong
\max(f_n,0)\cdot V_{p^n}
\oplus V_{p^n-\mu_{m_{n-1}}} \oplus \dots \oplus V_{p^n-\mu_{\max(e_n,0)+1}}.\]

Here we adapt an argument of Norman~\cite[Theorem 3]{Norman2008}.  By ~\cite[Lemma 4]{Norman2008} or~\cite[Theorem 2.2.2]{IandI2009}, there exits $w_\ell \in D_{p^n-R_{n-1}+S_{n-1}-\ell}$ such that $F G w_\ell \cong V_{p^n}$ if $1 \leq \ell \leq \max(f_n,0)$, $F G w_\ell \cong V_{p^n-\mu_{m_{n-1}+1-(\ell- \max(f_n,0))}}$ if $ \max(f_n,0)+1 \leq \ell \leq  \min(p^n-R_{n-1},S_{n-1})$, and
\[V_{p^n-R_{n-1}} \otimes V_{S_{n-1}} = \bigoplus_{\ell=1}^{\min(p^n-R_{n-1},S_{n-1}) }F G w_\ell.\]

Then $((g-1)^{\ell-1}(w_1),(g-1)^{\ell-2}(w_2),\dots,(g-1)(w_{\ell-1}),w_\ell)$ is a basis for $D_{p^n-R_{n-1}+S_{n-1}-\ell}$.  In particular, $w_\ell \notin (g-1)(D_{p^n-R_{n-1}+S_{n-1}-\ell+1})$.

Let
\[w_\ell=\sum_{j=1}^\ell \beta_{\ell+1-j,j}v_{p^n-R_{n-1}-\ell+j,S_{n-1}+1-j} \in D_{p^n-R_{n-1}+S_{n-1}-\ell}.\]

By Lemma~\ref{nuTomu}, when $\max(e_n,0)+1 \leq k \leq m_{n-1}$,  the $(S_{n-1}+1-k)$th summand of $V_{p^n-R_{n-1}} \otimes V_{S_{n-1}} $ in order of non-increasing dimension is $V_{p^n-\mu_k}$.

Now
\begin{align*}
w_{S_{n-1}+1-k'}
&=\sum_{j=1}^{S_{n-1}+1-k'} \beta_{S_{n-1}+2-k'-j,j}v_{p^n-R_{n-1}-(S_{n-1}+1-k')+j,S_{n-1}+1-j} \\
&=\sum_{j=1}^{S_{n-1}+1-k'} \beta_{S_{n-1}+2-k'-j,j}v_{p^n+k'-R_{n-1}-S_{n-1}+j-1,S_{n-1}+1-j} \\
& \quad \in D_{p^n+k'-R_{n-1}-1} \subset V_{p^n-R_{n-1}}\otimes V_{S_{n-1}}.
\end{align*}
Since $(g-1)^{p^n-\mu}(w_{S_{n-1}+1-k'})=0 \in D_{k'+\mu-R_{n-1}-1}=D_{S_{n-1}-k}$, when $1 \leq z \leq S_{n-1}-k$, the coefficient of $v_{z,S_{n-1}-k+1-z}$ in $(g-1)^{p^n-\mu}(w_{S_{n-1}+1-k'})$ is
\[\sum_{j=1}^{S_{n-1}+1-k'} \beta_{S_{n-1}+2-k'-j,j}\binom{p^n-\mu}{p^n+k'-R_{n-1}-S_{n-1}+j-1-z}=0.\]

Step 2: $V_{p^n} \otimes V_{p^n}$.  By~\cite[Lemma 4]{Norman2008} or~\cite[Theorem 2.2.2]{IandI2009},when $1 \leq \ell \leq p^n$, there exists $u_\ell \in D_{2 p^n-\ell}$ such that $F G u_\ell \cong V_{p^n}$ and
\[V_{p^n} \otimes V_{p^n} = \bigoplus_{\ell=1}^{p^n} F G u_\ell.\]
Hence $(g-1)^{p^n}(u)=0$ for every $u \in V_{p^n} \otimes V_{p^n}$.

When $1 \leq \ell \leq \min(p^n-R_{n-1},S_{n-1})$, let
\[y_\ell=\sum_{j=1}^\ell \beta_{\ell+1-j,j} v_{p^n-\ell+j,p^n+1-j} \in D_{2p^n-\ell}.
\]
We claim that $(g-1)^{p^n-1}(y_\ell) \neq 0$.  
\begin{proof}[Proof of Claim]
First $((g-1)^{\ell-1}(u_1),(g-1)^{\ell-2}(u_2),\dots,(g-1)(u_{\ell-1}),u_\ell)$ is a basis for $D_{2 p^n-\ell}$.  Write
\[y_\ell=\epsilon_1 (g-1)^{\ell-1}(u_1)+\epsilon_2 (g-1)^{\ell-2}(u_2)+\dots+\epsilon_{\ell-1} (g-1)(u_{\ell-1})+
\epsilon_\ell u_\ell.\]
We argue by contradiction.  If $(g-1)^{p^n-1}(y_\ell) = 0$,  then
\[0=(g-1)^{p^n-1}(y_\ell) =0+\epsilon_\ell(g-1)^{p^n-1}(u_\ell)\]
forcing $\epsilon_\ell=0$ and $y_\ell=(g-1)(u)$ where
\[u=\epsilon_1 (g-1)^{\ell-2}(u_1)+\epsilon_2 (g-1)^{\ell-3}(u_2)+\dots+\epsilon_{\ell-1} u_{\ell-1}
\in D_{2p^n+1-\ell}.\]
But if $y_\ell \in (g-1)(D_{2p^n+1-\ell})$, then $w_\ell \in (g-1)(D_{p^n-R_{n-1}+S_{n-1}-\ell+1})$ --- a contradiction!
\end{proof}
Then $(g-1)^{p^n-1}(y_\ell)=b_\ell x_{p^n+1-\ell}$ where $b_\ell \in F \setminus \{0\}$ since $(g-1)^{p^n-1}(y_\ell)\in \{v \in D_ {p^n+1-\ell}\mid g(v)=v\}=\{\alpha x_{p^n+1-\ell}\mid \alpha \in F\}$~\cite[Lemma 3]{Norman2008}.  Scale $y_\ell$ so that $(g-1)^{p^n-1}(y_\ell)=x_{p^n+1-\ell}$.

In $V_{p^n} \otimes V_{p^n}$,
\[
y_{S_{n-1}+1-k'}
=\sum_{j=1}^{S_{n-1}+1-k'} \beta_{S_{n-1}+2-k'-j,j}v_{p^n+k'-S_{n-1}-1+j,p^n+1-j}
\in D_{2p^n+k'-S_{n-1}-1}.
\]

Then $(g-1)^{p^n -\mu}(y_{S_{n-1}+1-k'})\in D_{p^n+k'+\mu-S_{n-1}-1}=D_{p^n+R_{n-1}-k}$.  The usual basis for $ D_{p^n+R_{n-1}-k}$ is
\[\mathcal{B}_{p^n+R_{n-1}-k}=(v_{R_{n-1}-k+1,p^n},\dots, v_{p^n,R_{n-1}-k+1}),\]
so $\dim D_{p^n+R_{n-1}-k}=p^n+k-R_{n-1}$.

Thus
when $1 \leq z \leq S_{n-1}-k$, the coefficient of $v_{R_{n-1}+z, p^n-k+1-z}$ in $(g-1)^{p^n -\mu}(y_{S_{n-1}+1-k'})$ is
\[\sum_{j=1}^{S_{n-1}+1-k'} \beta_{S_{n-1}+2-k'-j,j}\binom{p^n-\mu}{p^n+k'-S_{n-1}-1+j-R_{n-1}-z},\]
which equals $0$ from above.

We have just seen that 
the elements  
\[v_{R_{n-1}+1,p^n-k},v_{R_{n-1}+2,p^n-k-1},\dots,v_{R_{n-1}+S_{n-1}-k,p^n+1-S_{n-1}}
\]
of $\mathcal{B}_{p^n+R_{n-1}-k}$ occur with coefficient $0$ in $(g-1)^{p^n-\mu}(y_{S_{n-1}+1-k'}) $. The other elements of $\mathcal{B}_{p^n+R_{n-1}-k}$ come in two groups: $v_{R_{n-1}-k+1,p^n},\dots,v_{R_{n-1},p^n-k+1}$ with $k$ vectors and
$v_{R_{n-1}+S_{n-1}+1-k,p^n-S_{n-1}}, \dots, v_{p^n,R_{n-1}-k+1}$ with $p^n+k-R_{n-1}-S_{n-1}$ vectors.

Write $(g-1)^{p^n-\mu}(y_{S_{n-1}+1-k'}) \in D_{p^n+R_{n-1}-k}$ as
\[z_k=\sum_{j=1}^k \alpha'_j v_{R_{n-1}-k+j,p^n+1-j}+\sum_{j=1}^{k-e_n}\delta'_j v_{R_{n-1}+S_{n-1}+j-k,p^n-S_{n-1}+1-j}.\]
So $(g-1)^{\mu-1}(z_k)=x_{p^n+R_{n-1}+1-k-\mu}=x_{p^n-S_{n-1}+k'}$.

Step 3: Identifying $\alpha'_j$ and $\delta'_j$ when $k=k_0$.  When $z < k'+1$, the coefficient of $v_{z,p^n-S_{n-1}+k+1-z}$ in 
\[(g-1)^{\mu-1}(\sum_{j=1}^{k-e_n}\delta'_j v_{R_{n-1}+S_{n-1}+j-k,p^n-S_{n-1}+1-j})\] 
is $0$ because $R_{n-1}+S_{n-1}+j-k-(\mu-1)=j+k'>k'$ by Lemma~\ref{Multiplicity}.

Thus when $1 \leq z \leq k'$, the coefficient of $v_{z,p^n-S_{n-1}+k+1-z}$ in $(g-1)^{\mu-1}(z_k)$ is
\[\sum_{j=1}^k \alpha'_j \binom{\mu-1}{R_{n-1}-k+j-z}=(-1)^{z-1}.\]

In $V_{R_{n-1}} \times V_{S_{N-1}}$, when $\max(e_n,0)+1 \leq k \leq m_{n-1}$, 
\[(g-1)^{\mu-1}\left(\sum_{j=1}^k a_{k+1-j,j}v_{R_{n-1}-k+j,S_{n-1}+1-j}\right)=x_{k_0+k_\infty-k}.\]

Hence the coefficient of $v_{z,k'+1-z}$ in $(g-1)^{\mu-1}\left(\sum_{j=1}^k a_{k+1-j,j}v_{R_{n-1}-k+j,S_{n-1}+1-j}\right)$ is
\[
\sum_{j=1}^k a_{k+1-j,j}\binom{\mu-1}{R_{n-1}-k+j-z}=(-1)^{z-1}.\]
If $k=k_0$, there is a unique $y \in D_{R_{n-1}+S_{n-1}-k_0}$ such that $(g-1)^{\mu-1}(y)=x_{k_0+k_\infty-k}$ by Lemma~\ref{Unique}.  It follows that $\alpha'_j= a_{k+1-j,j}$ when $1 \leq j \leq k$.

When $z>R_{n-1}$, the coefficient of $v_{z,p^n-S_{n-1}+1+k'-z}$ in \\$(g-1)^{\mu-1}(\sum_{j=1}^k \alpha'_j v_{R_{n-1}-k+j,p^n+1-j})$ is $0$ because $R_{n-1}-k+j \leq R_{n-1}$.

Thus when $1 \leq z \leq p^n-R_{n-1}-S_{n-1}+k'$,  the coefficient of $v_{R_{n-1}+z,p^n-R_{n-1}-S_{n-1}+k'+1-(R_{n-1}+z)}$ in $(g-1)^{\mu-1}(z_k)$ is 
\[\sum_{j=1}^{k-e_n}\delta'_j \binom{\mu-1}{R_{n-1}+S_{n-1}+j-k-(R_{n-1}+z)}=(-1)^{R_{n-1}+z-1},\]
or
\[\sum_{j=1}^{k-e_n}\delta'_j \binom{\mu-1}{S_{n-1}+j-k-z}=(-1)^{R_{n-1}+z-1}.\]

\
In $V_{p^n-R_{n-1}} \otimes V_{p^n-S_{n-1}}$, when $\max(e_n,0)+1 \leq k \leq m_{n-1}$, 
\[(g-1)^{\mu-1}\left(\sum_{j=1}^{k-e_n} d_{k+p^n+1-R_{n-1}-S_{n-1}-j,j}v_{S_{n-1}-k+j,p^n-S_{n-1}+1-j}\right)=
x_{p^n-R_{n-1}-S_{n-1}+k'}.\]

Hence the coefficient of $v_{z,p^n-R_{n-1}-S_{n-1}+k'+1-z}$ in
\[(g-1)^{\mu-1}\left(\sum_{j=1}^{k-e_n} d_{k+p^n+1-R_{n-1}-S_{n-1}-j,j}v_{S_{n-1}-k+j,p^n-S_{n-1}+1-j}\right)\] is
\[\sum_{j=1}^{k-e_n} d_{k+p^n+1-R_{n-1}-S_{n-1}-j,j}\binom{\mu-1}{S_{n-1}-k+j-z}=(-1)^{z-1}.\]

If $k=k_0$,  there is a unique $y \in D_{p^n-R_{n-1}+p^n-S_{n-1}-k_0}$ such that $(g-1)^{\mu-1}(y)=x_{p^n-R_{n-1}-S_{n-1}+k'}$ by Lemma~\ref{Unique}.  It follows that $\delta'_j =(-1)^{R_{n-1}} d_{k+p^n+1-R_{n-1}-S_{n-1}-j,j}$ when $1 \leq j \leq k-e_n$.   Hence
\begin{align*}
z_{k_0}&=\sum_{j=1}^{k_0} a_{k_0+1-j,j} v_{R_{n-1}-k_0+j,p^n+1-j}\\
&\quad+(-1)^{R_{n-1}}\sum_{j=1}^{k_0-e_n}d_{k_0-e_n+1-j,j}v_{R_{n-1}+S_{n-1}+j-k_0,p^n-S_{n-1}+1-j}.
\end{align*}
Since $(g-1)^{\mu-1}(z_{k_0})=x_{p^n-S_{n-1}+k_\infty}$ the coefficient of $v_{z,p^n-S_{n-1}+k_\infty+1-z}$ in $(g-1)^{\mu-1}(z_{k_0})$ is
\begin{align*}
(-1)^{z-1}&=\sum_{j=1}^{k_0} a_{k_0+1-j,j}\binom{\mu-1}{R_{n-1}-k_0+j-z}\\
&\quad+(-1)^{R_{n-1}}\sum_{j=1}^{k_0-e_n}d_{k_0+p^n+1-R_{n-1}-S_{n-1}-j,j}
\binom{\mu-1}{R_{n-1}+S_{n-1}+j-k_0-z}\\
&=A_0(z)+(-1)^{R_{n-1}}D_0(z).
\end{align*}
We have proved Equation~\ref{Eq1} in the case $k=k_0$.

Step 4: Now we must prove Equation~\ref{Eq1} when $k>k_0$.

Define $z_k \in D_{p^n+R_{n-1}-k}$ by
\begin{align*}
z_k&=\sum_{j=1}^k a_{k+1-j,j}v_{R_{n-1}-k+j,p^n+1-j}\\\
&\quad +(-1)^{R_{n-1}} \sum_{j=1}^{k-e_n}d_{k-e_n+1-j,j}v_{R_{n-1}+S_{n-1}+j-k,p^n-S_{n-1}+1-j},\\
&=\sum_{j=1}^{k_0} (-1)^{k-k_0}a_{k_0+1-j,j}v_{R_{n-1}-k+j,p^n+1-j}\\
&\quad +(-1)^{R_{n-1}} \sum_{j=1}^{k_0-e_n}(-1)^{k-k_0}d_{k_0-e_n+1-j,j}v_{R_{n-1}+S_{n-1}+j-k,p^n-S_{n-1}+1-j}
\end{align*}
applying the relationships between $a_{k+1-j,j}$ and $a_{k_0+1-j,j}$ and $d_{k-e_n+1-j,j}$ and $d_{k_0-e_n+1-j,j}$ from Section~\ref{J}.

We claim that $(g-1)^{\mu-1}(z_k)=x_{p^n-S_{n-1}+k'}$.  What follows is very close to a repetition of the proof of Section~\ref{J}.  The coefficient of $v_{z,p^n-S_{n-1}+k'+1-z}$ in $(g-1)^{\mu-1}(z_k)$ is
\begin{align*}
&(-1)^{k-k_0}\sum_{j=1}^{k_0} a_{k_0+1-j,j}\binom{\mu-1}{R_{n-1}-k+j-z}\\
&\quad+(-1)^{k-k_0}(-1)^{R_{n-1}}\sum_{j=1}^{k_0-e_n}d_{k_0-e_n+1-j,j}\binom{\mu-1}{R_{n-1}+S_{n-1}+j-k-z}\\
&=(-1)^{k-k_0}\sum_{j=1}^{k_0} a_{k_0+1-j,j}\binom{\mu-1}{R_{n-1}-k_0+j-(z+k-k_0)}\\
&\quad+(-1)^{k-k_0}(-1)^{R_{n-1}}\sum_{j=1}^{k_0-e_n}d_{k_0-e_n+1-j,j}\binom{\mu-1}{R_{n-1}+S_{n-1}+j-k_0-(z+k-k_0)}\\
&=(-1)^{k-k_0}\left(A_0(z+k-k_0)+(-1)^{R_{n-1}}D_0(z+k-k_0)\right)\\
&=(-1)^{k-k_0}(-1)^{z+k-k_0-1}\\
&=(-1)^{z-1}.
\end{align*}

We have proved Equation~\ref{Eq1}.
\end{proof}

\begin{lemma}\label{DiAi+1}
When $i p^n+R_{n-1}+1 \leq z \leq (i+1)p^n$, $(-1)^i((-1)^{R_{n-1}}D_i(z)-A_{i+1}(z))=(-1)^{z-1}$.  Since $A_{i+1}(z)=0$ when $z \leq i p^n+p^n-S_{n-1}+k'$ by Lemma~\ref{Reduction}, this implies that $(-1)^i(-1)^{R_{n-1}}D_i(z)=(-1)^{z-1}$ when $i p^n+R_{n-1}+1 \leq z \leq i p^n+p^n-S_{n-1}+k'$.
\end{lemma}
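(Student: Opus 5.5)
By Corollary~\ref{period}, the shifts $A_{i+1}(z+p^n)=A_i(z)$ and $D_{i+1}(z+p^n)=D_i(z)$ reduce the statement to the case $i=0$. So the plan is to prove
\[
(-1)^{R_{n-1}}D_0(z)-A_1(z)=(-1)^{z-1}\qquad\text{for } R_{n-1}+1\le z\le p^n .
\]
The general $i$ then follows exactly as in the proof of Lemma~\ref{AiDi}, because $(-1)^{ip^n}=(-1)^i$. Following Lemma~\ref{AiDi}, I would realize the combination $(-1)^{R_{n-1}}D_0-A_1$ as the coefficients of $(g-1)^{\mu-1}$ applied to a single element. That element lives in a tensor product where these two blocks of coefficients appear side by side, as in Step~2 there, but now with roles shifted by $p^n$.

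The element to use is
\[
z'_k=(-1)^{R_{n-1}}\sum_{j=1}^{k-e_n} d_{k-e_n+1-j,j}\,v_{R_{n-1}+S_{n-1}+j-k,\,p^n-S_{n-1}+1-j}-\sum_{j=1}^{k} a_{k+1-j,j}\,v_{p^n+R_{n-1}-k+j,\,1-j+\cdot}.
\]
It should be viewed inside $V_{p^n}\otimes V_{p^n}$, or possibly $V_{2p^n}\otimes V_{p^n}$, so that both blocks lie in one diagonal $D_d$. In the $A_1$ block the first index is shifted by $p^n$, and the second index is chosen so that it is on the same anti-diagonal as the $D$ block. Mirroring Step~1 of Lemma~\ref{AiDi}, I would produce this element as $(g-1)^{p^n-\mu}$ of a lift, namely a $y_\ell$ in $V_{p^n}\otimes V_{p^n}$ built from the generators $w_\ell$ of the complementary product $V_{R_{n-1}}\otimes V_{p^n-S_{n-1}}$. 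By Lemma~\ref{Tensors}(4), that product has summands $V_{p^n-\mu_k}$ in a known position. The Step~1 argument then shows that the coefficients in the middle range, where neither block contributes, vanish. The Claim in Step~2 shows that $(g-1)^{p^n-1}y_\ell\neq0$, so after rescaling, $(g-1)^{\mu-1}$ of the resulting $z'_k$ equals an $x_d$.

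Next I identify the two blocks with $A$ and $D$ when $k=k_0$. For $z$ in the range where only the $D$ block contributes, the coefficients of $(g-1)^{\mu-1}(z'_k)$ reproduce the defining equations for $J(p^n-R_{n-1},p^n-S_{n-1})$ on the relevant anti-diagonal. Lemma~\ref{Unique} then forces the $D$ block to be $(-1)^{R_{n-1}}d$. Similarly, in the upper range only the $A$ block contributes, and its equations are those of $J(R_{n-1},S_{n-1})$ with index shifted and an extra sign. Lemma~\ref{Unique} identifies this block as $-a$. The sign $-1$ should come out of the shift by $p^n$ in the alternating pattern $(-1)^{z-1}$, which is the same mechanism that produces the factor $(-1)^i$ in Corollary~\ref{period}. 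Reading off all coefficients of $(g-1)^{\mu-1}(z'_{k_0})=x$ then gives $(-1)^{R_{n-1}}D_0(z)-A_1(z)=(-1)^{z-1}$ on the full range $R_{n-1}+1\le z\le p^n$. The case $k>k_0$ follows verbatim as in Step~4 of Lemma~\ref{AiDi}: use Lemma~\ref{Mult} to replace $a_{k+1-j,j}$ and $d_{k-e_n+1-j,j}$ by $(-1)^{k-k_0}$ times the $k_0$ entries, and substitute $z\mapsto z+k-k_0$.

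The main obstacle is the index bookkeeping needed to choose the ambient product and the diagonal so that both blocks fit and the overall sign is correct. In particular, the relative sign $-1$ between $D_0$ and $A_1$ has to be checked against the sign conventions of $x_d$. I would first test the construction on a small example, say $p=3$, $n=1$, $R_0=S_0=1$, to fix the indices. Afterwards I would verify that the vanishing range obtained from Lemma~\ref{Tensors}(4) is exactly the interval $R_{n-1}+S_{n-1}-k<\cdot\le p^n-S_{n-1}+k'$ where both $A_1$ and $D_0$ behave as Lemma~\ref{Reduction} predicts. The second sentence of the statement is then immediate from Lemma~\ref{Reduction}.
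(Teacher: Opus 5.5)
Your reduction to $i=0$ is fine, and your choice of complementary product $V_{R_{n-1}}\otimes V_{p^n-S_{n-1}}$ (Lemma~\ref{Tensors}(4)) is the right one. The central construction, however, fails as you set it up.

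Your $D$-block vectors $v_{R_{n-1}+S_{n-1}+j-k,\,p^n-S_{n-1}+1-j}$ lie in $D_{p^n+R_{n-1}-k}$. An $A$-block vector with first index $p^n+R_{n-1}-k+j$ on that same anti-diagonal would therefore need second index $1-j\le 0$. Passing to $V_{2p^n}\otimes V_{p^n}$, or to any other ambient module, does not change this, because it is forced by the anti-diagonal.

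More basically, suppose you want the coefficient at $v_{z,\cdot}$ itself to be $(-1)^{R_{n-1}}D_0(z)-A_1(z)$. Then the $A_1$ block needs first indices up to $p^n+R_{n-1}>p^n$, so the element cannot live in $V_{p^n}\otimes V_{p^n}$. The Step~2 mechanism of Lemma~\ref{AiDi}, which relies on $(g-1)^{p^n}=0$ on $V_{p^n}\otimes V_{p^n}$, is not available in a larger module. So the ``index bookkeeping'' you defer is exactly the missing step, not a routine check.

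The missing idea is to read coefficients with an offset of $R_{n-1}$, i.e.\ at $v_{z-R_{n-1},\cdot}$. The $D$ block then sits at $v_{S_{n-1}-k+j,\,p^n+1-j}$ and the $A$ block at $v_{p^n-k+j,\,S_{n-1}+1-j}$, both in $D_{p^n+S_{n-1}-k}\subset V_{p^n}\otimes V_{p^n}$. This is precisely the element that Lemma~\ref{AiDi} builds for the complementary data $(p^n-R_{n-1},\,p^n-S_{n-1},\,k-e_n)$. Since $J(p^n-(p^n-R_{n-1}),p^n-(p^n-S_{n-1}))=J(R_{n-1},S_{n-1})$, the roles of $A$ and $D$ simply swap. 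The $(k-e_n)$th summand of $V_{p^n-R_{n-1}}\otimes V_{p^n-S_{n-1}}$ is still $V_{\mu_k}$ by Lemma~\ref{Tensors}(2). Moreover, $\max(e_n,0)+1\le k\le m_{n-1}$ translates exactly into the hypothesis of Lemma~\ref{AiDi} for that data.

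So nothing needs to be rebuilt, and this is the paper's proof. Write $A_0',D_0'$ for the functions attached to the complementary data. Equation~\eqref{Eq1} then gives $A_0'(b)+(-1)^{p^n-R_{n-1}}D_0'(b)=(-1)^{b-1}$ for $1\le b\le p^n-R_{n-1}$. Directly from the definitions, $D_0(z)=A_0'(z-R_{n-1})$ and $A_1(z)=D_0'(z-R_{n-1})$. Multiplying by $(-1)^{R_{n-1}}$ yields $(-1)^{R_{n-1}}D_0(z)+(-1)^{p^n}A_1(z)=(-1)^{z-1}$ for $R_{n-1}+1\le z\le p^n$. The relative sign is therefore $(-1)^{p^n}=-1$ in $F$ (trivially when $p=2$), which is the precise form of your heuristic about where the $-1$ comes from.
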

\begin{proof}
By a similar argument to the one stated at the beginning of the proof of Lemma~\ref{AiDi}, it suffices to show 
\begin{equation}\label{Eq2}
(-1)^{R_{n-1}}D_0(z)-A_1(z)=(-1)^{z-1} \qquad \text{ when }R_{n-1}+1  \leq z \leq p^n.
\end{equation}

As we remarked,  $A_{0,p^n,R_{n-1},S_{n-1},k}$ and $D_{0,p^n,R_{n-1},S_{n-1},k}$ would have been better notations for $A_0$ and $D_0$.  Now denote $A_{0,p^n,p^n-R_{n-1},p^n-S_{n-1},k-e_n}$ by $A'_0$ \\and $D_{0,p^n,p^n-R_{n-1},p^n-S_{n-1},k-e_n}$ by $D'_0$.

Switching roles, replace $R_{n-1}$ by $p^n-R_{n-1}$, $S_{n-1}$ by $p^n-S_{n-1}$, and $k$ by $k+p^n-R_{n-1}-S_{n-1}$.  Then $R_{n-1}-k$ becomes $p^n-R_{n-1}-(k+p^n-R_{n-1}-S_{n-1})=S_{n-1}-k$,  and $R_{n-1}+S_{n-1}-k$ becomes 
$(p^n-R_{n-1})+(p^n-S_{n-1})-(k+p^n-R_{n-1}-S_{n-1})=p^n-k$.  Finally $k-e_n+p^n -(p^n-R_{n-1})-(p^n-S_{n-1})=k$.  Then
\[A'_0(a)=\sum_{j=1}^{k-e_n} \delta_{k-e_n+1-j,j}\binom{\mu_k-1}{S_{n-1}-k+j-a}\]
and
\[D'_0(a)=\sum_{j=1}^k \alpha_{k+1-j,j}\binom{\mu_k-1}{p^n-k+j-a},\]
and by Lemma~\ref{AiDi},
\[A'_0(a)+(-1)^{p^n-R_{n-1}}D'_0(a)=(-1)^{a-1}
\]
when $1 \leq a \leq p^n-R_{n-1}$.   Recall
\[D_0(a)=\sum_{j=1}^{k-e_n} \delta_{k-e_n+1-j,j}\binom{\mu_k-1}{R_{n-1}+S_{n-1}-k+j-a}\]
and
\[A_1(a)=\sum_{j=1}^k \alpha_{k+1-j,j}\binom{\mu_k-1}{p^n+R_{n-1}-k+j-a}.\]
Hence $D_0(a)=A'_0(a-R_{n-1})$ and $A_1=D'_0(a-R_{n-1})$.

If $R_{n-1}+1 \leq a \leq p^n$, then $1 \leq a-R_{n-1} \leq p^n-R_{n-1}$ and $A'_0(a-R_{n-1})+(-1)^{p^n-R_{n-1}}D'_0(a-R_{n-1})=(-1)^{a-R_{n-1}-1}$.   Then
\begin{align*}
(-1)^{R_{n-1}}&D_0(a)+(-1)^{p^n}A_1(a)\\
&=(-1)^{R_{n-1}}\left(A'_0(a-R_{n-1})+(-1)^{p^n-R_{n-1}}D'_0(a-R_{n-1})\right)\\
&=(-1)^{R_{n-1}}(-1)^{a-R_{n-1}-1}\\
&=(-1)^{a-1}.
\end{align*}
We have proved Equation~\ref{Eq2}.
\end{proof}

\section{$z_\ell$ and $y_\ell$ when $m_{n-1}>0$}\label{m>0}
Recall that we are assuming $ \max(r+s-p^{n+1},0)+1 \leq \ell \leq \min(r,s)$.  Write $\ell$ as $t p^n+k$ where $t$ and $k$ are  integers with $t \geq 0$ and $1 \leq k \leq p^n$.    Then
\[ \max(r+s-p^{n+1},0)+1 \leq t p^n+k \leq \min(r_np^n+R_{n-1},s_n p^n+S_{n-1}).\]
In Lemmas~\ref{M1} to ~\ref{M5}, we will be interested in the invertibility of certain  matrices of size $(t+1)\times(t+1)$ or $t \times t$ whose entries are binomial coefficients where $k$ is restricted to certain subintervals of $[1,p^n]$.

We will use these matrices in working our way through the five cases of Corollary~\ref{RenaudCor} in Propositions~\ref{R1} to~\ref{R5} with the third case of $m_{n-1}+1 \leq k \leq M_{n-1}$ spanning Propositions~\ref{R3AndR<S} and~\ref{R3AndR>S}.  Note that only the second and fourth cases require an appeal to recursion.
  
Also $e_n=m_{n-1}+M_{n-1}-p^n$ and $X$ will the $(t+1) \times 1$ matrix over $F$ with $(i,1)$ entry $(-1)^{i-1}$.

\begin{lemma}\label{M1}
Suppose that $M(1)$ is $(t+1) \times (t+1)$ with $(i,j)$ entry $\binom{r_n+s_n-2 t}{r_n-t+j-i}$.  Let $1 \leq k \leq m_{n-1}$.  Then $M(1)$ is invertible and $r_n+s_n-2t<p$.
\end{lemma}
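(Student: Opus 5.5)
The plan is to reduce to Lemma~\ref{pdiv} with $a=r_n+s_n-2t$, $b=r_n-t$, and size $t+1$. Note first that the $(i,j)$ entry $\binom{r_n+s_n-2t}{r_n-t+j-i}$ of $M(1)$ has exactly the form $\binom{a}{b+j-i}$ of that lemma. So once we check $b\le a$, $a\ge 0$, $b\ge 0$, and $a+(t+1)-1=r_n+s_n-t<p$, Lemma~\ref{pdiv} gives invertibility. The claim $r_n+s_n-2t<p$ then follows because $r_n+s_n-2t\le r_n+s_n-t<p$.

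Nonnegativity comes from the standing range of $\ell$. We have $tp^n+k=\ell\le\min(r,s)$ with $k\ge1$ and $R_{n-1},S_{n-1}<p^n$, so $tp^n<r_np^n+p^n$. Hence $t\le r_n$, and likewise $t\le s_n$. This gives $b=r_n-t\ge0$ and $a-b=s_n-t\ge0$. In fact, since $k\le m_{n-1}\le R_{n-1}$, we even have $tp^n+k\le r_np^n+R_{n-1}$, consistent with $t\le r_n$.

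The main point is the bound $r_n+s_n-t<p$, i.e.\ $t\ge r_n+s_n-p+1$. If $r_n+s_n\le p-1$ this is trivial, since $t\ge0$. Otherwise $r+s>p^{n+1}$ is possible, and we use $\ell\ge r+s-p^{n+1}+1$:
\[
tp^n+k\ge (r_n+s_n-p)p^n+R_{n-1}+S_{n-1}+1.
\]
The hypothesis $k\le m_{n-1}$ enters here. Since $k\le m_{n-1}\le R_{n-1}+S_{n-1}$, we get $tp^n\ge (r_n+s_n-p)p^n+(R_{n-1}+S_{n-1}-k)+1>(r_n+s_n-p)p^n$. Therefore $t\ge r_n+s_n-p+1$, i.e.\ $r_n+s_n-t\le p-1$. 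If $r_n+s_n\ge p$ but $r+s\le p^{n+1}$, then $r+s\ge p^{n+1}$ forces $R_{n-1}=S_{n-1}=0$ and equality, contradicting $m_{n-1}>0$. So this subcase does not arise. With these inequalities in hand, Lemma~\ref{pdiv} applies and the proof is complete. The only delicate step is this last bound, where $k\le m_{n-1}$ is genuinely used.
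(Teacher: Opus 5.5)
Your proof is correct and is essentially the paper's argument: reduce to Lemma~\ref{pdiv} with $a=r_n+s_n-2t$, $b=r_n-t$ and matrix size $t+1$; get $t\le\min(r_n,s_n)$ from $\ell\le\min(r,s)$; and get $t>r_n+s_n-p$ from $\ell>r+s-p^{n+1}$ together with $k\le R_{n-1}+S_{n-1}$. Two small points. Your case split on $r_n+s_n\ge p$ and the closing remark about $r+s\le p^{n+1}$ are redundant, since the standing assumption gives $\ell\ge r+s-p^{n+1}+1$ in every case. Also, only the weaker bound $k\le m_{n-1}+M_{n-1}$ is actually used, not $k\le m_{n-1}$ itself.
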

\begin{proof}
By Lemma~\ref{pdiv} it suffices to show that $r_n-t \leq r_n+s_n-2t$ and $(r_n+s_n-2t)+t=r_n+s_n-t<p$.

Since $t p^n +k \leq \min(r_np^n+R_{n-1},s_n p^n+S_{n-1})$, $t \leq \min(r_n,s_n)$. Then $r_n-t \leq r_n-t+s_n-t=r_n+s_n-2t$.
 
Since $\ell=t p^n+k>r+s-p^{n+1}=(r_n+s_n-p)p^n+m_{n-1}+M_{n-1}$ and $k \leq m_{n-1}+M_{n-1}$, $t>r_n+s_n-p$.  Thus $p>r_n+s_n-t$.  Hence $r_n+s_n-2t \leq r_n+s_n-t<p$.
\end{proof}

\begin{proposition}\label{R1}
Assume that $t \leq \min(r_n,s_n)$ and $1 \leq k \leq \max(e_n,0)$.  Then $\lambda_\ell=(r_n+s_n-2t)p^n+p^n$ by Corollary~\ref{RenaudCor}.   So $D_{(r_n+s_n-t)p^n+R_{n-1}+S_{n-1}-k}$  is mapped by $(g-1)^{(r_n+s_n-2t)p^n}$ to $D_{t p^n+R_{n-1}+S_{n-1}-k}$.  
\begin{enumerate}
\item Define $z_\ell \in D_{ t p^n+R_{n-1}+S_{n-1}-k}$ by
\[z_\ell=\sum_{i=0}^t (-1)^i (-1)^{R_{n-1}-k} v_{i p^n+R_{n-1}+1-k,(t-i)p^n+S_{n-1}}.
\]
Then $(g-1)^{p^n-1}(z_\ell)=x_{(t-1)p^n+R_{n-1}+S_{n-1}+1-k}$. 

\item With $\zeta=M(1)^{-1}X$, define $y_\ell \in D_{(r_n+s_n-t)p^n+R_{n-1}+S_{n-1}-k}$ by
\[
y_\ell=\sum_{i=0}^t \zeta_{i+1,1}(-1)^{R_{n-1}-k}v_{(r_n-t+i)p^n+R_{n-1}+1-k,(s_n-i)p^n+S_{n-1}}.
\]
Then $(g-1)^{(r_n+s_n-2t)p^n}(y_\ell)=z_\ell$.
\end{enumerate}
\end{proposition}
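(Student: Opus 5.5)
The plan is to verify both parts by direct coefficient computations in the basis $\mathcal{B}$, using the formula for $(g-1)^N(v_{a,b})$ from Section~\ref{Calc}. Note first that $1\leq k\leq \max(e_n,0)$ forces $e_n>0$, hence $k\leq e_n\leq m_{n-1}\leq R_{n-1}$, so every index $ip^n+R_{n-1}+1-k$ appearing in $z_\ell$ is positive. Also $t\leq\min(r_n,s_n)$, so all vectors involved genuinely lie in $V_r\otimes V_s$.

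The basic tool is the formula $(g-1)^N(v_{a,b})=\sum_i\binom{N}{N-i}v_{a-N+i,b-i}$. Rewritten, it says that the coefficient of $v_{z,d+1-z}$ in $(g-1)^N(v_{a,b})$ is $\binom{N}{a-z}$, where $d=a+b-1-N$.

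\emph{Part (1).} Take $N=p^n-1$ and $d=(t-1)p^n+R_{n-1}+S_{n-1}+1-k$. For $1\leq z\leq d$, the coefficient of $v_{z,d+1-z}$ in $(g-1)^{p^n-1}(z_\ell)$ is
\[\sum_{i=0}^t(-1)^i(-1)^{R_{n-1}-k}\binom{p^n-1}{ip^n+R_{n-1}+1-k-z}.\]
The argument of the binomial lies in $[0,p^n-1]$ for at most one $i$, so at most one term survives. To see that exactly one survives, observe that the windows $z\in[ip^n+R_{n-1}+2-k-p^n,\ ip^n+R_{n-1}+1-k]$ for $i=0,\dots,t$ are consecutive. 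The first starts at $R_{n-1}+2-k-p^n\leq 1$, and the last ends at $tp^n+R_{n-1}+1-k\geq d$ because $S_{n-1}<p^n$. For the surviving $i$, the lemma $\binom{p^n-1}{j}=(-1)^j$ gives the value $(-1)^{i(p^n+1)}(-1)^{2R_{n-1}-2k+1-z}=(-1)^{z-1}$. Here $(-1)^{i(p^n+1)}=1$ for odd $p$, and signs are irrelevant when $p=2$. Since the coefficient of $v_{z,d+1-z}$ in $x_d$ is $(-1)^{z-1}$, part (1) follows.

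\emph{Part (2).} By Lemma~\ref{M1} (applicable because $k\leq m_{n-1}$), $c=r_n+s_n-2t<p$ and $M(1)$ is invertible, so $\zeta$ is defined. Apply Corollary~\ref{p^n2} to each term $v_{a,b}$ of $y_\ell$, with $a=(r_n-t+i)p^n+R_{n-1}+1-k$ and $b=(s_n-i)p^n+S_{n-1}$. This produces $\sum_m\binom{c}{m}v_{(i+m-s_n+t)p^n+R_{n-1}+1-k,\ (s_n-i-m)p^n+S_{n-1}}$.

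Reindex by $i'=i+m-s_n+t$, so the target vector is $v_{i'p^n+R_{n-1}+1-k,(t-i')p^n+S_{n-1}}$. Its coefficient is then
\[(-1)^{R_{n-1}-k}\sum_i\zeta_{i+1,1}\binom{c}{s_n-t+i'-i}=(-1)^{R_{n-1}-k}\sum_i\binom{c}{r_n-t+(i+1)-(i'+1)}\zeta_{i+1,1},\]
using $\binom{c}{x}=\binom{c}{c-x}$. This is exactly $(-1)^{R_{n-1}-k}(M(1)\zeta)_{i'+1,1}=(-1)^{R_{n-1}-k}(-1)^{i'}$, which is the required coefficient in $z_\ell$.

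It remains to check that no stray terms occur. When $i'<0$, the first index is at most $-p^n+R_{n-1}<0$, so the term vanishes. When $i'>t$, the second index is at most $S_{n-1}-p^n<0$, so it vanishes as well. Hence $(g-1)^{cp^n}(y_\ell)=z_\ell$.

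The main obstacle is this index bookkeeping: matching the reindexed binomial with the $(i,j)$ entry of $M(1)$, and confirming that out-of-range terms vanish while every $z$ in part (1) is covered by some window. The individual steps are otherwise routine.
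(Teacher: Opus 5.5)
Your proof is correct and follows the paper's argument. In part (1) you isolate the single surviving term $\binom{p^n-1}{ip^n+R_{n-1}+1-k-z}$ using consecutive windows that cover $[1,d]$; these are the paper's intervals $I_j$. In part (2) you apply Corollary~\ref{p^n2}, using $r_n+s_n-2t<p$ from Lemma~\ref{M1}, and identify the resulting coefficients with the entries of $M(1)\zeta=X$, including the same implicit symmetry $\binom{c}{x}=\binom{c}{c-x}$ and a slightly more explicit check that out-of-range terms vanish.
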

Note that
\[z_\ell=\sum_{i=0}^t (-1)^i \sum_{j=1}^k a_{k+1-j,j}v_{i p^n+R_{n-1}+j-k,(t-i)p^n+S_{n-1}+1-j}\]
with a corresponding expression for $y_\ell$
since the $k$th anti-diagonal of $A=J(R_{n-1},S_{n-1})$ is $((-1)^{R_{n-1}-k},0,\dots,0)$, but we do not need to make that recursive call here.
\begin{proof}
(1) Recall that $\mathcal{B}_{(t-1)p^n+R_{n-1}+S_{n-1}+1-k}$ is
\[\{v_{a, (t-1)p^n+R_{n-1}+S_{n-1}+2-k-a} \mid 1 \leq a \leq (t-1)p^n+R_{n-1}+S_{n-1}+1-k\}.
\]
Now the coefficient $\alpha_a$ of $v_{a, (t-1)p^n+R_{n-1}+S_{n-1}+2-k-a}$ in $(g-1)^{p^n-1}(z_\ell)$ is
\[\alpha_a=\sum_{i=0}^t  (-1)^{i+R_{n-1}-k} \binom{p^n-1}{i p^n+R_{n-1}+1-k-a}.\]
For $1  \leq j \leq t$, define the interval $I_j=[(j-1)p^n +R_{n-1}+2-k,j p^n+R_{n-1}+1-k]$ and define $I_0=[1,R_{n-1}+1-k]$.
Since $1 \leq a \leq (t-1)p^n+m_{n-1}+M_{n-1}+1-k$, it follows that $a$ falls in exactly one of the intervals $I_j$ with $0 \leq j\leq t$.
Suppose that $a \in I_i$.  Then $0 \leq i p^n+R_{n-1}+1-k-a \leq p^n-1$ and $ (-1)^{i+R_{n-1}-k} \binom{p^n-1}{i p^n+R_{n-1}+1-k-a}$ is the only nonzero term in $\alpha_a$.  Hence
\begin{align*}
\alpha_a&=(-1)^{i+R_{n-1}-k} \binom{p^n-1}{i p^n+R_{n-1}+1-k-a}\\
&=(-1)^{i+R_{n-1}-k+i p^n+R_{n-1}+1-k-a}\\
&=(-1)^{i+i p^n+1-a}.\\
\end{align*}
If $p=2$, then $(-1)^{i +i p^n+1-a}=1=(-1)^{a-1}$.  If $p$ is odd, then $i+ip^n$ is even and $(-1)^{i+i p^n+1-a}=(-1)^{a-1}$.
We have proved $(g-1)^{p^n-1}(z_\ell)=x_{(t-1)p^n+R_{n-1}+S_{n-1}+1-k}$.

(2) When $1 \leq i \leq t+1$ the $(i,1)$ entry of $M(1)\zeta$ is
\[(-1)^{i-1}=\sum_{j=1}^{t+1} \binom{r_n+s_n-2t}{r_n-t+j-i}\zeta_{j,1}=\sum_{j=0}^{t} \binom{r_n+s_n-2t}{r_n-t+j+1-i}\zeta_{j+1,1}.\]
By Corollary~\ref{p^n2}, since the only vectors in $\mathcal{B}_{t p^n+R_{n-1}+S_{n-1}-k}$ with possibly nonzero coefficients in 
$(g-1)^{(r_n+s_n-2t)p^n}(y_\ell)$ are $v_{a p^n+R_{n-1}+1-k,(t-a)p^n+S_{n-1}}$ with $0 \leq a \leq t$,
\begin{align*}
(g-1)&^{(r_n+s_n-2t)p^n}(v_{(r_n-t+i)p^n+R_{n-1}+1-k,(s_n-i)p^n+S_{n-1}})\\
&=\sum_{a=0}^t \binom{r_n+s_n-2t}{r_n-t+i-a}v_{a p^n+R_{n-1}+1-k,(t-a)p^n+S_{n-1}},
\end{align*}
using the fact that $r_n+s_n-2t<p$ from Lemma~\ref{M1}.

Hence
\begin{align*}
(g-1)&^{(r_n+s_n-2t)p^n}(y_\ell)\\
&=\sum_{i=0}^t \zeta_{i+1,1}(-1)^{R_{n-1}-k}
\left(\sum_{a=0}^t \binom{r_n+s_n-2t}{r_n-t+i-a}v_{a p^n+R_{n-1}+1-k,(t-a)p^n+S_{n-1}}\right)\\
&=(-1)^{R_{n-1}-k}\sum_{a=0}^t\left(\sum_{i=0}^t \binom{r_n+s_n-2t}{r_n-t+i-a}\zeta_{i+1,1}\right)
v_{a p^n+R_{n-1}+1-k,(t-a)p^n+S_{n-1}}\\
&=(-1)^{R_{n-1}-k}\sum_{a=0}^t(-1)^a v_{a p^n+R_{n-1}+1-k,(t-a)p^n+S_{n-1}}\\
&=z_\ell.
\end{align*}
\end{proof}

\begin{lemma}\label{M2}
Suppose that $M(2)$ is the $t \times t$ with $(i,j)$ entry $\binom{r_n+s_n-2 t}{r_n-t+j-i}$ and $1 \leq k \leq m_{n-1}$.  Then $M(2)$ is invertible and $r_n+s_n-2t<p$.
\end{lemma}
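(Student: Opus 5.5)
The plan is to mimic the proof of Lemma~\ref{M1}: apply Lemma~\ref{pdiv} with $a=r_n+s_n-2t$, $b=r_n-t$ and matrix size $t$. So it suffices to show two things. First, $r_n-t\leq r_n+s_n-2t$. Second, $(r_n+s_n-2t)+t-1<p$.

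For the first inequality, I use the standing hypothesis $\ell=tp^n+k\leq\min(r_np^n+R_{n-1},s_np^n+S_{n-1})$ with $k\geq 1$. Since $R_{n-1},S_{n-1}<p^n$, this gives $t\leq\min(r_n,s_n)$. Hence $r_n-t\leq (r_n-t)+(s_n-t)$, which is what is needed.

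For the second inequality, I use the other standing hypothesis $\ell>r+s-p^{n+1}=(r_n+s_n-p)p^n+m_{n-1}+M_{n-1}$. Together with $k\leq m_{n-1}\leq m_{n-1}+M_{n-1}$, this forces $tp^n>(r_n+s_n-p)p^n$, so $t>r_n+s_n-p$. Therefore $r_n+s_n-t<p$, and a fortiori $r_n+s_n-2t\leq r_n+s_n-t<p$. This is the same estimate as in Lemma~\ref{M1}, and it even gives the slightly stronger bound $a+t<p$ rather than the $a+t-1<p$ that Lemma~\ref{pdiv} requires for a $t\times t$ matrix. The claim $r_n+s_n-2t<p$ in the statement is the same bound.

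Finally, I dispose of the degenerate case $t=0$. There $M(2)$ is the empty $0\times 0$ matrix, which is trivially invertible. The inequality $r_n+s_n<p$ still follows from $0=t>r_n+s_n-p$, so nothing extra is needed. I expect no real obstacle. The only care needed is to confirm that $k\leq m_{n-1}$ gives $k\leq m_{n-1}+M_{n-1}$, so that the strict inequality $t>r_n+s_n-p$ follows exactly as in Lemma~\ref{M1}.
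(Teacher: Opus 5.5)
Your proposal is correct and follows the paper's proof: you apply Lemma~\ref{pdiv} with $a=r_n+s_n-2t$, $b=r_n-t$ and size $t$, get $b\le a$ from $t\le\min(r_n,s_n)$, and get $a+t-1<p$ from $t>r_n+s_n-p$ (using $k\le m_{n-1}+M_{n-1}$), just as the paper does by adapting Lemma~\ref{M1}. Your separate treatment of the empty case $t=0$ is a harmless extra detail that the paper leaves implicit.
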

\begin{proof}
Similar to the proof of Lemma~\ref{M1} except that we need to show that $r_n+s_n-2t+t-1=r_n+s_n-t-1<p$.  Since $t p^n+k>(r_n+s_n-p)p^n+m_{n-1}+M_{n-1}$ and $k \leq m_{n-1}+M_{n-1}$, $t>r_n+s_n-p$, so $p>r_n+s_n-t$.  Again $r_n+s_n-2t \leq r_n+s-n-t<p$.
\end{proof}

Let $Y$ be the $t \times 1$ matrix whose $(i,1)$ entry is $(-1)^{i-1}$ while $X$ is the $(t+1)\times 1$ matrix whose $(i,1)$ entry is $(-1)^{i-1}$.

\begin{proposition}\label{R2}
Assume that $t \leq \min(r_n,s_n)$ and $\max(e_n,0)+1 \leq k \leq m_{n-1}$.  Then $\lambda_\ell=(r_n+s_n-2t)p^n+\mu_k$  by Corollary~\ref{RenaudCor}.  Hence  $(g-1)^{(r_n+s_n-2t)p^n}$ maps $D_{(r_n+s_n-t)p^n+R_{n-1}+S_{n-1}-k}$ to $D_{t p^n+R_{n-1}+S_{n-1}-k}$.   Assume that $A=J(R_{n-1},S_{n-1})$ and $D=J(p^n-R_{n-1},p^n-S_{n-1})$ are known. 
\begin{enumerate}
\item  Define $z_\ell \in D_{ t p^n+R_{n-1}+S_{n-1}-k}$ by
\begin{align*}
z_\ell&=\sum_{i=0}^t(-1)^i\sum_{j=1}^k a_{k+1-j,j}v_{i p^n+R_{n-1}-k+j,(t-i)p^n+S_{n-1}+1-j}\\
&\quad+(-1)^{R_{n-1}}\sum_{i=0}^{t-1}(-1)^i \sum_{j=1}^{k-e_n} d_{k-e_n+1-j,j}v_{i p^n+R_{n-1}+S_{n-1}+j-k,(t-i)p^n+1-j}.
\end{align*}
Then $(g-1)^{\mu_k-1}(z_\ell)=x_{t p^n+k'}$. 
\item With $\zeta=M(1)^{-1} X$ and $\eta=M(2)^{-1}Y$,
define $y_\ell \in D_{(r_n+s_n-t)p^n+R_{n-1}+S_{n-1}-k}$ by
\begin{align*}
y_\ell&=\sum_{i=0}^t \zeta_{i+1,1} \sum_{j=1}^k a_{k+1-j,j} v_{(r_n-t+i)p^n +R_{n-1}+j-k,(s_n-i)p^n+S_{n-1}+1-j}\\
&\quad + (-1)^{R_{n-1}} \sum_{i=0}^{t-1} \eta_{i+1,1} \sum_{j=1}^{k-e_n}d_{k-e_n+1-j,j}v_{(r_n-t+i)p^n+m_{n-1}+M_{n-1}+j-k,(s_n-i)p^n+1-j}
\end{align*}
Then $(g-1)^{(r_n+s_n-2t)p^n}(y_\ell)=z_\ell$.
\end{enumerate}
\end{proposition}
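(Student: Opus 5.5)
Part (1) should follow from Theorem~\ref{ZThm} once the relevant coefficients are read off. First I will check that $(g-1)^{\mu_k-1}$ sends $D_{tp^n+R_{n-1}+S_{n-1}-k}$ to $D_{tp^n+R_{n-1}+S_{n-1}+1-k-\mu_k}$. By Lemma~\ref{Multiplicity} this target is $D_{tp^n+k'}$, whose basis is $v_{z,tp^n+k'+1-z}$ for $1\le z\le tp^n+k'$. I will use the formula $(g-1)^{n}(v_{a,b})=\sum_i\binom{n}{n-i}v_{a-n+i,b-i}$ to compute the coefficient of $v_{z,tp^n+k'+1-z}$ in $(g-1)^{\mu_k-1}(z_\ell)$. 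Only the first index matters, since the second is then forced.

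For the $A$-part, the term $v_{ip^n+R_{n-1}-k+j,\cdot}$ contributes $\binom{\mu_k-1}{ip^n+R_{n-1}-k+j-z}$. For the $D$-part, the term $v_{ip^n+R_{n-1}+S_{n-1}+j-k,\cdot}$ contributes $\binom{\mu_k-1}{ip^n+R_{n-1}+S_{n-1}+j-k-z}$. So the coefficient is
\[\sum_{i=0}^t(-1)^iA_i(z)+(-1)^{R_{n-1}}\sum_{i=0}^{t-1}(-1)^iD_i(z).\]
This is $f_{t-1}(z)+(-1)^tA_t(z)$, with $f_{t-1}$ as in Theorem~\ref{ZThm}. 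Equivalently it is $f_t(z)-(-1)^{t}(-1)^{R_{n-1}}D_t(z)$.

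By Lemma~\ref{Reduction}, $D_t(z)=0$ whenever $z\le tp^n+k'$, so on the whole range $1\le z\le tp^n+k'$ the coefficient equals $f_t(z)$. Theorem~\ref{ZThm} gives $f_t(z)=(-1)^{z-1}$ for $z\le tp^n+p^n-S_{n-1}+k'$, and this range contains $tp^n+k'$. Hence the image is $x_{tp^n+k'}$.

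Some bookkeeping is needed to confirm the terms of $z_\ell$ are genuine basis vectors; I expect to handle this with the convention that out-of-range $v$'s are $0$. I also need $k-e_n\le\min$ of the $D$ dimensions and all indices positive. The second index $(t-i)p^n+1-j$ is positive for $i\le t-1$, since $j\le k-e_n\le p^n$.

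For part (2), I will apply Corollary~\ref{p^n2} with $c=r_n+s_n-2t<p$, which holds by Lemma~\ref{M1}. This gives
\[(g-1)^{cp^n}(v_{(r_n-t+i)p^n+\alpha,(s_n-i)p^n+\beta})=\sum_{a}\binom{c}{r_n-t+i-a}v_{ap^n+\alpha,(t-a)p^n+\beta}.\]
Here terms with $a<0$ or $a>t$ vanish, because one index becomes nonpositive when $1\le\alpha,\beta\le p^n$.

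For the $A$-block, take $\alpha=R_{n-1}-k+j$ and $\beta=S_{n-1}+1-j$. Collecting the coefficient of each $a$ gives $\sum_i\binom{c}{r_n-t+i-a}\zeta_{i+1,1}=(M(1)\zeta)_{a+1}=(-1)^a$, exactly as in Proposition~\ref{R1}(2).

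For the $D$-block, take $\alpha=R_{n-1}+S_{n-1}+j-k$ and $\beta=1-j+p^n$, rewriting $(s_n-i)p^n+1-j=(s_n-i-1)p^n+(p^n+1-j)$. The sum over $i$ from $0$ to $t-1$ then maps to $a$ ranging over $0,\dots,t-1$, where $(t-1-a)p^n+p^n+1-j=(t-a)p^n+1-j$. The coefficient becomes $\sum_i\binom{c}{r_n-t+i-a}\eta_{i+1,1}=(M(2)\eta)_{a+1}=(-1)^a$ for $0\le a\le t-1$, by Lemma~\ref{M2}.

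The main obstacle I anticipate is the $a=t$ term in the $D$-block. Its second index is $1-j\le0$, so it vanishes and no $t$th equation is needed, which is why $M(2)$ is $t\times t$. I must also check that no $a<0$ term survives. Once these index shifts are verified, summing the two blocks reproduces $z_\ell$.
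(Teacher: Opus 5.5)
Your proposal is correct and follows essentially the same route as the paper. In part (1) you read off the coefficients as $\sum_{i=0}^{t}(-1)^iA_i(z)+(-1)^{R_{n-1}}\sum_{i=0}^{t-1}(-1)^iD_i(z)$ and invoke Theorem~\ref{ZThm}; in part (2) you split $y_\ell$ into its $A$- and $D$-blocks and apply Corollary~\ref{p^n2} with $M(1)\zeta=X$ and $M(2)\eta=Y$. Two of your steps fill in what the paper leaves implicit: using Lemma~\ref{Reduction} to discard $D_t(z)$ for $z\le tp^n+k'$ (the paper equates its truncated sum directly with $f_t$), and checking that the $a=t$ term of the $D$-block vanishes.
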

\begin{proof}
(1) The coefficient of $v_{z,(t-1) p^n+k'+1-z}$ in $(g-1)^{\mu_k-1}(z_\ell)$ is
\begin{align*}
&\sum_{i=0}^t(-1)^i\sum_{j=1}^k a_{k+1-j,j}\binom{\mu_k-1}{i p^n+R_{n-1}-k+j-z}\\
&\quad +(-1)^{R_{n-1}}\sum_{i=0}^{t-1}(-1)^i \sum_{j=1}^{k-e_n}d_{k-e_n+1-j,j}
\binom{\mu_k-1}{i p^n+R_{n-1}+S_{n-1}+j-k-z}.
\end{align*}
But this equals $f_t(z)=(-1)^{z-1}$ by Theorem~\ref{ZThm} when $1 \leq z \leq t p^n+k'$. 
We have proved that $(g-1)^{\mu_k-1}(z_\ell)=x_{t p^n+k'}$. 

(2) Let
\[z'_\ell=\sum_{i=0}^t(-1)^i\sum_{j=1}^k a_{k+1-j,j}v_{i p^n+R_{n-1}-k+j,(t-i)p^n+S_{n-1}+1-j},\]
\[z''_\ell=\sum_{i=0}^{t-1}(-1)^i \sum_{j=1}^{k-e_n}d_{k-e_n+1-j,j}v_{i p^n+R_{n-1}+S_{n-1}+j-k,(t-i)p^n+1-j},\]
\[y'_\ell=\sum_{i=0}^t \zeta_{i,0} \sum_{j=1}^k a_{k+1-j,j} v_{(r_n-t+i)p^n +R_{n-1}+j-k,(s_n-i)p^n+S_{n-1}+1-j},\]
and 
\[y''_\ell=\sum_{i=0}^{t-1} \eta_{i,0} \sum_{j=1}^{k-e_n}d_{k-e_n+1-j,j}v_{(r_n-t+i)p^n+m_{n-1}+M_{n-1}+j-k,(s_n-i)p^n+1-j},\]
so $z_\ell=z'_\ell+(-1)^{R_{n-1}} z''_\ell$ and $y_\ell=y'_\ell+(-1)^{R_{n-1}}y''_\ell$.
We will prove our result by showing that
$(g-1)^{(r_n+s_n-2t)p^n}(y'_\ell)=z'_\ell$
and
$(g-1)^{(r_n+s_n-2t)p^n}(y''_\ell)=z''_\ell$.

By Corollary~\ref{p^n2}, since the only vectors in $\mathcal{B}_{t p^n+R_{n-1}+S_{n-1}-k}$ with possibly nonzero coefficients in 
$(g-1)^{(r_n+s_n-2t)p^n}(v_{(r_n-t+i)p^n +R_{n-1}+j-k,(s_n-i)p^n+S_{n-1}+1-j})$ are \\$v_{ap^n+R_{n-1}-k+j,(t-a)p^n+S_{n-1}+1-j}$ with $0 \leq a \leq t$,
\begin{align*}
(g-1)&^{(r_n+s_n-2t)p^n}( v_{(r_n-t+i)p^n +R_{n-1}+j-k,(s_n-i)p^n+S_{n-1}+1-j})\\
&=\sum_{a=0}^t \binom{r_n-s_n-2t}{r-t+i-a}v_{a p^n+R_{n-1}-k+j,(t-a)p^n+S_{n-1}+1-j},
\end{align*}
using the fact that $r_n-s_n-2t<p$ by Lemma~\ref{M1}.
When $1 \leq i \leq t+1$ the $(i,1)$ entry of $M(1) \zeta$ is
\[(-1)^{i-1}=\sum_{j=1}^{t+1}  \binom{r_n+s_n-2t}{r_n-t+j-i}\zeta_{j,1}= \sum_{j=0}^{t}  \binom{r_n+s_n-2t}{r_n-t+j+1-i}\zeta_{j+1,1}.\]
Hence
\begin{align*}
(g-1)&^{(r_n+s_n-2t)p^n}(y'_\ell)\\
&=\sum_{i=0}^t \zeta_{i+1,1} \sum_{j=1}^k a_{k+1-j,j}\sum_{a=0}^t \binom{r_n-s_n-2t}{r-t+i-a}v_{a p^n+R_{n-1}-k+j,(t-a)p^n+S_{n-1}+1-j}\\
&=\sum_{a=0}^t \left(\sum_{i=0}^t\binom{r_n-s_n-2t}{r-t+i-a}\zeta_{i+1,1}\right)\sum_{j=1}^k a_{k+1-j,j}v_{a p^n+R_{n-1}-k+j,(t-a)p^n+S_{n-1}+1-j}\\
&=\sum_{a=0}^t(-1)^a \sum_{j=1}^k a_{k+1-j,j}v_{a p^n+R_{n-1}-k+j,(t-a)p^n+S_{n-1}+1-j}\\
&=z'_\ell.
\end{align*}
Again by Corollary~\ref{p^n2}, since the only vectors in $\mathcal{B}_{t p^n+R_{n-1}+S_{n-1}-k}$ with possibly nonzero coefficients in 
$(g-1)^{(r_n+s_n-2t)p^n}(v_{(r_n-t+i)p^n+m_{n-1}+M_{n-1}+j-k,(s_n-i)p^n+1-j})$ are \\$v_{ap^n+m_{n-1}+M_{n-1}+j-k,(t-a)p^n+1-j}$ with $0 \leq a \leq t-1$,
\begin{align*}
(g-1)&^{(r_n+s_n-2t)p^n}(v_{(r_n-t+i)p^n+m_{n-1}+M_{n-1}+j-k,(s_n-i)p^n+1-j})\\
&=\sum_{a=0}^{t-1}\binom{r_n+s_n-2t}{r_n-t+i-a}v_{ap^n+m_{n-1}+M_{n-1}+j-k,(t-a)p^n+1-j},
\end{align*}
using the fact that  $r_n+s_n-2t<p$ by Lemma~\ref{M2}.

When $1 \leq i \leq t$ the $(i,1)$ entry of $M(2) \eta$ is
\[(-1)^{i-1}=\sum_{j=1}^t \binom{r_n+s_n-2t}{r_n-t+j-i}\eta_{j,1}=\sum_{j=0}^{t-1} \binom{r_n+s_n-2t}{r_n-t+j+1-i}\eta_{j+1,1}.\]
Hence
\begin{align*}
(g-1)&^{(r_n+s_n-2t)p^n}(y''_\ell)\\
&=\sum_{i=0}^{t-1} \eta_{i+1,1} \sum_{j=1}^{k-e_n}d_{k-e_n+1-j,j}\sum_{a=0}^{t-1}\binom{r_n+s_n-2t}{r_n-t+i-a}v_{ap^n+m_{n-1}+M_{n-1}+j-k,(t-a)p^n+1-j}\\
&=\sum_{a=0}^{t-1}\left(\sum_{i=0}^{t-1} \binom{r_n+s_n-2t}{r_n-t+i-a}\eta_{i+1,1}\right)\sum_{j=1}^{k-e_n}d_{k-e_n+1-j,j}
v_{ap^n+m_{n-1}+M_{n-1}+j-k,(t-a)p^n+1-j}\\
&=\sum_{a=0}^{t-1}(-1)^a\sum_{j=1}^{k-e_n}d_{k-e_n+1-j,j}
v_{ap^n+m_{n-1}+M_{n-1}+j-k,(t-a)p^n+1-j}\\
&=z''_\ell.
\end{align*}
We have proved that $(g-1)^{(r_n+s_n-2t)p^n}(y_\ell)=z_\ell$.
\end{proof}

\begin{lemma}\label{M3}
Suppose that $M(3)$ is $(t+1) \times (t+1)$ with $(i,j)$ entry $\binom{r_n+s_n-2 t-1}{r_n-t+j-i-1}$ and that either $m_{n-1}+1 \leq k\leq M_{n-1}$ or 
$M_{n-1}+1 \leq k \leq p^n-e_2$.  Then $M(3)$ is invertible and $r_n+s_n-2 t-1<p$.
\end{lemma}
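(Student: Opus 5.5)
We apply Lemma~\ref{pdiv} with $a=r_n+s_n-2t-1$, $b=r_n-t-1$ and matrix size $t+1$, exactly as in the proof of Lemma~\ref{M1}. The $(i,j)$ entry of $M(3)$ is then $\binom{a}{b+j-i}$, so two inequalities suffice: $b\le a$ and $a+(t+1)-1<p$. The second is $r_n+s_n-t-1<p$, and it also gives the second assertion, since $r_n+s_n-2t-1\le r_n+s_n-t-1$. (We read $e_2$ in the statement as $e_n$.)

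First, $b\le a$. We have $tp^n+k\le \min(r_np^n+R_{n-1},s_np^n+S_{n-1})$ with $1\le k\le p^n$. This forces $t\le\min(r_n,s_n)$, because $t>r_n$ would give $tp^n+k>(r_n+1)p^n>r_np^n+R_{n-1}$. In fact more is true in the present ranges, since $k>m_{n-1}$. Suppose, say, $R_{n-1}\le S_{n-1}$, so $m_{n-1}=R_{n-1}$. If $t=r_n$, then $k\le R_{n-1}=m_{n-1}$, which is excluded. Hence $t\le r_n-1$ in that case, and symmetrically $t\le s_n-1$ when $S_{n-1}\le R_{n-1}$. This only matters if $b=r_n-t-1$ would otherwise be negative, i.e. if $t=r_n$. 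If $t=r_n$ occurs (possible when $S_{n-1}<R_{n-1}$ and $S_{n-1}<k\le R_{n-1}$), then $b=-1$ and Lemma~\ref{pdiv} does not literally apply. I would handle this by shifting: $\binom{a}{-1+j-i}$ vanishes when $j<i$ and is $1$ on the subdiagonal $j=i-1$. The clean fix is to transpose and reindex, or to note the matrix is $\binom{a}{a-(b+j-i)}$, which becomes the Toeplitz form $\binom{a}{b'+i-j}$ with $b'=a-b=s_n-t\ge 0$. Lemma~\ref{pdiv} then applies to the transpose, with $b'\le a$ equivalent to $r_n-t-1\ge 0$ or handled symmetrically. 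So I would run the argument with whichever of $b=r_n-t-1$ or $b'=s_n-t$ is nonnegative and at most $a$. In all cases $a=(r_n-t)+(s_n-t)-1\ge b$ whenever $s_n-t\ge 1$, and $s_n-t\ge1$ holds when $R_{n-1}\le S_{n-1}$ fails symmetrically.

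Second, $r_n+s_n-t-1<p$. As in Lemma~\ref{M1}, $\ell>r+s-p^{n+1}=(r_n+s_n-p)p^n+m_{n-1}+M_{n-1}$. In the case $m_{n-1}+1\le k\le M_{n-1}$ we have $k\le m_{n-1}+M_{n-1}$, so $t>r_n+s_n-p$. This gives $r_n+s_n-t<p$, hence $r_n+s_n-t-1<p$ with room to spare. In the case $M_{n-1}+1\le k\le p^n-\max(-e_n,0)$ we again have $k\le m_{n-1}+M_{n-1}$, because $p^n-\max(-e_n,0)=\min(p^n,m_{n-1}+M_{n-1})$. The same conclusion follows. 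With both hypotheses of Lemma~\ref{pdiv} verified, Roberts' determinant $\prod_{z=0}^{t}\binom{a+z}{b}/\binom{b+z}{b}$ has all its factors built from integers below $p$, so $M(3)$ is invertible.

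The main obstacle is the first inequality, i.e. guaranteeing a nonnegative offset $b\le a$ when $t$ reaches $\min(r_n,s_n)$. I expect to resolve it with the case split on which of $R_{n-1},S_{n-1}$ is smaller, using $k>m_{n-1}$ and passing to the transpose when needed.
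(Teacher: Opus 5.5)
Your overall plan is the paper's: apply Lemma~\ref{pdiv} with $a=r_n+s_n-2t-1$, $b=r_n-t-1$ and size $t+1$. Your proof of $r_n+s_n-t-1<p$ (from $\ell>r+s-p^{n+1}$ and $k\le\min(p^n,m_{n-1}+M_{n-1})$) is exactly the paper's, and it is fine. You also correctly noticed something the paper's proof skips. That proof checks only $b\le a$, which is just $t\le s_n$, and never checks $b\ge 0$, which Lemma~\ref{pdiv} requires. You are also right that $t=r_n$ can occur when $S_{n-1}<k\le R_{n-1}$.

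\textbf{The gap.} Your repair of the case $t=r_n$ does not work, and no repair can. When $t=r_n$ the $(i,j)$ entry is $\binom{a}{j-i-1}$, which vanishes for every $j\le i$, including the diagonal. The $1$'s sit on the superdiagonal $j=i+1$, not the subdiagonal as you wrote. So $M(3)$ is strictly upper triangular, hence singular.

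Transposing does not help. There $b'=a-b=s_n-t=a+1>a$, as your own remark that $b'\le a$ is equivalent to $r_n-t-1\ge 0$ already shows. So neither $b$ nor $b'$ lies in $[0,a]$, and ``handled symmetrically'' covers a case that is actually false. Your sentence about $a\ge b$ when $s_n-t\ge 1$ also aims at the wrong inequality: $a\ge b$ always holds, and the real issue is $b\ge 0$.

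\textbf{Counterexample.} Take $p=3$, $r=2$, $s=4$. Then $n=1$, $(r_1,R_0)=(0,2)$, $(s_1,S_0)=(1,1)$, $m_0=1$ and $M_0=2$. The index $\ell=2$ is admissible and gives $t=0=r_1$ and $k=2\in[m_0+1,M_0]$. Yet $M(3)=\bigl(\binom{0}{-1}\bigr)=(0)$. So the lemma as stated is false, and the paper's proof has the same hole as yours.

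\textbf{The fix.} Narrow the first hypothesis range to $R_{n-1}+1\le k\le S_{n-1}$, or simply assume $t\le r_n-1$. Then $k>R_{n-1}$ together with $tp^n+k\le r_np^n+R_{n-1}$ forces $t\le r_n-1$. Hence $0\le b\le a$, and Lemma~\ref{pdiv} applies directly with no transposing. The same holds automatically in the second range, since there $k>M_{n-1}\ge R_{n-1}$.

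This narrowed version covers every place $M(3)$ is actually used: Propositions~\ref{R3AndR<S} and~\ref{R4} and Lemma~\ref{M=S1}, all of which assume $t\le r_n-1$. The subcase $R_{n-1}>S_{n-1}$ is the one the paper treats with $M(4)$ in Proposition~\ref{R3AndR>S}.
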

\begin{proof}
By Lemma~\ref{pdiv} it suffices to show that $r_n-t-1 \leq r_n+s_n-2t-1$ and $(r_n+s_n-2t-1)+t=r_n+s_n-t-1<p$.

Since  $t p^n +k \leq \min(r_np^n+R_{n-1},s_n p^n+S_{n-1})$, $t \leq \min(r_n,s_n)$.   Thus $r_n-t-1 \leq (r_n-t-1)+(s_n-t)=r_n+s_n-2t-1$.

Assume that $m_{n-1}+1 \leq k\leq M_{n-1}$.  Since $tp^n+k>(r_n+s_n-p)p^n+m_{n-1}+M_{n-1}$ and $k \leq m_{n-1}+M_{n-1}$, $t>r_n+s_n-p$, from which it follows that $p>r_n+s_n-t>r_n+s_n-t-1$.  It follows that $r_n+s_n-2t-1<p$.

Assume that $M_{n-1}+1 \leq k \leq p^n-e_2$.  Then $p^n-e_2$ equals either $m_{n-1}+M_{n-1}$ if $m_{n-1}+M_{n-1} \leq p^n$ or $p^n$ if $m_{n-1}+M_{n-1}>p^n$.  In both cases $k \leq m_{n-1}+M_{n-1}$, and the argument of the previous paragraph holds.   Again it follows that $r_n+s_n-2t-1<p$.
\end{proof}

Now we look at the situation when $m_{n-1}+1 \leq k \leq M_{n-1}$.  Since this is empty when $m_{n-1}=M_{n-1}$, we assume that $m_{n-1}<M_{n-1}$.  This breaks into two cases: $R_{n-1}<S_{n-1}$ and $R_{n-1}>S_{n-1}$.

\begin{proposition}\label{R3AndR<S}
Assume that $R_{n-1}<S_{n-1}$, $t \leq \min(r_n-1,s_n)$, and $R_{n-1}+1 \leq k \leq S_{n-1}$.   Then $\lambda_\ell=(r_n+s_n-2t)p^n$ by Corollary~\ref{RenaudCor}.  Hence $(g-1)^{(r_n+s_n-2t-1)p^n}$ maps $D_{(r_n+s_n-t)p^n+R_{n-1}+S_{n-1}-k}$ to $D_{(t+1)p^n+R_{n-1}+S_{n-1}-k}$.
\begin{enumerate}
\item Define $z_\ell \in D_{(t+1)p^n+R_{n-1}+S_{n-1}-k}$ by
\[z_\ell=(-1)^{R_{n-1}}\sum_{i=0}^t (-1)^{i+k-1}v_{(i+1)p^n +R_{n-1}+1-k,(t-i)p^n +S_{n-1}}\]
Then $(g-1)^{p^n-1}(z_\ell)=x_{t p^n+R_{n-1}+S_{n-1}+1-k}$.
\item Let $\zeta=M(3)^{-1}X$.  Define $y_\ell \in  D_{(r_n+s_n-t)p^n+R_{n-1}+S_{n-1}-k}$ by
\[y_\ell=(-1)^{R_{n-1}+k-1}\sum_{i=0}^t \zeta_{i+1,1}v_{(r_n-t+i)p^n +R_{n-1}+1-k,(s_n-i)p^n+S_{n-1}}.
\]
Then $(g-1)^{(r_n+s_n-2 t-1)p^n}(y_\ell)=z_\ell$.
\end{enumerate}
\end{proposition}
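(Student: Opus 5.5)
The plan is to copy the proof of Proposition~\ref{R1}, changing only the index bookkeeping. There are two parts: a direct coefficient computation using $\binom{p^n-1}{i}=(-1)^i$, and a lifting step through the binomial matrix $M(3)$ via Corollary~\ref{p^n2}.

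For (1), I take the basis $\mathcal{B}_{tp^n+R_{n-1}+S_{n-1}+1-k}$, consisting of the vectors $v_{a,\,tp^n+R_{n-1}+S_{n-1}+2-k-a}$. Applying the formula for $(g-1)^{p^n-1}(v_{a,b})$ termwise, the coefficient of this vector in $(g-1)^{p^n-1}(z_\ell)$ is
\[\alpha_a=(-1)^{R_{n-1}}\sum_{i=0}^t(-1)^{i+k-1}\binom{p^n-1}{(i+1)p^n+R_{n-1}+1-k-a}.\]
Because $R_{n-1}+1\le k\le S_{n-1}<p^n$, we have $R_{n-1}+1-k\le 0$. So I partition $[1,\,tp^n+R_{n-1}+S_{n-1}+1-k]$ into the intervals
\[I_i=[\,ip^n+R_{n-1}+2-k,\;(i+1)p^n+R_{n-1}+1-k\,],\qquad 0\le i\le t.\]
These start at or below $1$ and cover up to $(t+1)p^n+R_{n-1}+1-k$, which is at least the top index $tp^n+R_{n-1}+S_{n-1}+1-k$ since $S_{n-1}\le p^n$. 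For $a\in I_i$ exactly one binomial has its lower entry in $[0,p^n-1]$, and it equals $(-1)^{(i+1)p^n+R_{n-1}+1-k-a}$. Multiplying by the prefactor gives $(-1)^{i+(i+1)p^n-a}$. For $p$ odd this is $(-1)^{2i+1-a}=(-1)^{a-1}$, and for $p=2$ all signs are $1$. This yields $x_{tp^n+R_{n-1}+S_{n-1}+1-k}$. The one place needing care is checking that the target dimension is exactly $\lambda_\ell=(r_n+s_n-2t)p^n$, which matches $(r_n+s_n-2t-1)p^n+(p^n-1)+1$.

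For (2), I apply Corollary~\ref{p^n2} with $c=r_n+s_n-2t-1$; this is legitimate since $c<p$ by Lemma~\ref{M3}. Each basis vector $v_{(r_n-t+i)p^n+R_{n-1}+1-k,\,(s_n-i)p^n+S_{n-1}}$ maps to
\[\sum_a\binom{c}{\,r_n-t+i-a-1\,}\,v_{(a+1)p^n+R_{n-1}+1-k,\,(t-a)p^n+S_{n-1}}.\]
Here $a$ ranges over $0\le a\le t$, and terms outside this range vanish: a nonpositive first index would require $a+1\le 0$, and $(t-a)p^n+S_{n-1}\le 0$ would require $a>t$. Summing against $\zeta_{i+1,1}$ and interchanging the sums, the coefficient of the $a$-th vector is $\sum_i\binom{c}{r_n-t+i-a-1}\zeta_{i+1,1}$. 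Its $(a+1)$ entry is the $(a+1)$ entry of $M(3)\zeta=X$, namely $(-1)^a$. After the prefactor $(-1)^{R_{n-1}+k-1}$ this reproduces $z_\ell$.

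I expect the main obstacle to be index hygiene rather than anything conceptual. Specifically, I must verify that the hypothesis $t\le\min(r_n-1,s_n)$ keeps all the indices positive and within $[1,r]\times[1,s]$. For example, the first index $(r_n-t+i)p^n+R_{n-1}+1-k$ is at least $p^n+R_{n-1}+1-k>0$ because $t\le r_n-1$, and at most $r$. I also need the row/column offset $-1$ in $M(3)$ to align exactly with the shift $(i+1)p^n$ in $z_\ell$; I will check this by setting $i=a$ in the Corollary~\ref{p^n2} expansion.
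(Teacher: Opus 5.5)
Your proposal is correct and follows the paper's proof essentially step for step. In (1) you use $\binom{p^n-1}{i}=(-1)^i$ with exactly one nonzero binomial for each $a$. In (2) you use Corollary~\ref{p^n2} with $c=r_n+s_n-2t-1<p$ (Lemma~\ref{M3}) together with $M(3)\zeta=X$. Your explicit interval partition and index-range checks simply spell out what the paper calls ``a now familiar argument.''
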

Note that
\[z_\ell=(-1)^{R_{n-1}}\sum_{i=0}^t (-1)^i \sum_{j=1}^{k-R_{n-1}} b_{k-R_{n-1}+1-j,j}
v_{(i+1)p^n +R_{n-1}+j-k,(t-i)p^n +S_{n-1}+1-j}\]
with a corresponding expression for $y_\ell$ since the $k'$th anti-diagonal of $B=J(p^n-R_{n-1},S_{n-1})$, with $1 \leq k' \leq S_{n-1}-R_{n-1}$,  is $((-1)^{R_{n-1}+k'-1},0,\dots,0)$, so $b_{k-R_{n-1},1}=(-1)^{k-1}$, but we do not need to make that recursive call here.
\begin{proof}
(1) Recall that $\mathcal{B}_{t p^n+R_{n-1}+S_{n-1}+1-k}$ consists of
$v_{a, t p^n+R_{n-1}+S_{n-1}+1-k-a}$ where $1 \leq a \leq t p^n+R_{n-1}+S_{n-1}+1-k$.
The coefficient $\alpha_a$ of $v_{a,tp^n+R_{n-1}+S_{n-1}+2-k-a}$ in $(g-1)^{p^n-1}(z_\ell)$ is
\[(-1)^{R_{n-1}}\sum_{i=0}^t (-1)^{i +k-1}\binom{p^n-1}{(i+1)p^n +R_{n-1}+1-k-a}.\]
By a now familiar argument, there is only one $i$ such that $\binom{p^n-1}{(i+1)p^n +R_{n-1}+1-k-a} \neq 0$.
Hence
\begin{align*}
\alpha_a&=(-1)^{R_{n-1}+i+k-1} \binom{p^n-1}{(i+1)p^n +R_{n-1}+1-k-a}\\
&=(-1)^{R_{n-1}+i+k-1}(-1)^{(i+1)p^n +R_{n-1}+1-k-a}\\
&=(-1)^{p^n-a}\\
&=(-1)^{a-1}.
\end{align*}
This proves $(g-1)^{p^n-1}(z_\ell)=x_{t p^n+R_{n-1}+S_{n-1}+1-k}$.

(2) When $1 \leq i \leq t+1$ the $(i,1)$ entry of $M(3) \zeta$ is
\[(-1)^{i-1}=\sum_{j=1}^{t+1}\binom{r_n+s_n-2 t-1}{r_n-t+j-i-1}\zeta_{j,1}= \sum_{j=0}^t \binom{r_n+s_n-2 t-1}{r_n-t+j-i}\zeta_{j+1,1}.\]
By Corollary~\ref{p^n2}, since the only vectors in $\mathcal{B}_{t p^n+R_{n-1}+S_{n-1}-k}$ with possibly nonzero coefficients in 
$(g-1)^{(r_n+s_n-2 t-1)p^n}(v_{(r_n-t+i)p^n +R_{n-1}+1-k,(s_n-i)p^n+S_{n-1}})$ are \\$v_{(a+1)p^n+R_{n-1}+1-k,(t-a)p^n+S_{n-1}}$ with $0 \leq a \leq t$,
\begin{align*}
(g-1)&^{(r_n+s_n-2 t-1)p^n}(v_{(r_n-t+i)p^n +R_{n-1}+1-k,(s_n-i)p^n+S_{n-1}}\\
&=\sum_{a=0}^t \binom{r_n+s_n-2 t-1}{r_n-t+i-a-1}v_{(a+1)p^n+R_{n-1}+1-k,(t-a)p^n+S_{n-1}},
\end{align*}
using the fact that $r_n+s_n-2t-1<p$ by Lemma~\ref{M3}.

Hence
\begin{align*}
(g-1)&^{(r_n+s_n-2 t-1)p^n}(y_\ell)\\
&=(-1)^{R_{n-1}+k-1}\sum_{i=0}^t \zeta_{i+1,1}
\left(\sum_{a=0}^t \binom{r_n+s_n-2 t-1}{r_n-t+i-a-1}v_{(a+1)p^n+R_{n-1}+1-k,(t-a)p^n+S_{n-1}}\right)\\
&=(-1)^{R_{n-1}+k-1}\sum_{a=0}^t
\left(\sum_{i=0}^t \binom{r_n+s_n-2 t-1}{r_n-t+i-a-1}\zeta_{i+1,1}\right)
v_{(a+1)p^n+R_{n-1}+1-k,(t-a)p^n+S_{n-1}}\\
&=(-1)^{R_{n-1}+k-1}\sum_{a=0}^t (-1)^a v_{(a+1)p^n+R_{n-1}+1-k,(t-a)p^n+S_{n-1}}\\
&=z_\ell.
\end{align*}
\end{proof}

\begin{lemma}\label{M4}
Suppose that $M(4)$ is $(t+1) \times (t+1)$ with $(i,j)$ entry  $\binom{r_n+s_n-2 t-1}{r_n-t+j-i}$ and that either $S_{n-1}+1 \leq k\leq R_{n-1}$ or 
$M_{n-1}+1 \leq k \leq p^n-e_2$.  Then $M(4)$ is invertible and $r_n+s_n-2 t-1<p$.
\end{lemma}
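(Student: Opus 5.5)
We follow the pattern of Lemmas~\ref{M1} and~\ref{M3} and apply Lemma~\ref{pdiv} with $a=r_n+s_n-2t-1$, $b=r_n-t$, and $k$ replaced by $t+1$. The $(i,j)$ entry of $M(4)$ is then $\binom{a}{b+j-i}$, so it suffices to verify two things. First, $b\le a$, that is, $r_n-t\le r_n+s_n-2t-1$, which is equivalent to $t\le s_n-1$. Second, $a+(t+1)-1=r_n+s_n-t-1<p$. Once $r_n+s_n-t-1<p$ holds, the bound $r_n+s_n-2t-1<p$ follows at once.

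For the second condition we repeat the argument of Lemma~\ref{M3}. In both ranges of $k$ we have $k\le m_{n-1}+M_{n-1}$. In the first range $S_{n-1}+1\le k\le R_{n-1}=M_{n-1}$. In the second range $k\le p^n-e_2$, and $p^n-e_2$ is either $m_{n-1}+M_{n-1}$ or $p^n\le m_{n-1}+M_{n-1}$, exactly as in Lemma~\ref{M3}. The standing assumption $\ell=tp^n+k>r+s-p^{n+1}=(r_n+s_n-p)p^n+m_{n-1}+M_{n-1}$ then forces $t>r_n+s_n-p$. Hence $r_n+s_n-t<p$, and a fortiori $r_n+s_n-t-1<p$.

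The first condition, $t\le s_n-1$, is the main obstacle, because the plain bound $t\le\min(r_n,s_n)$ used in Lemma~\ref{M3} is not enough here. The roles of $r$ and $s$ are swapped relative to Proposition~\ref{R3AndR<S}, so I would use $s$ rather than $r$.

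In the first range, $S_{n-1}<k$. From $tp^n+k\le s=s_np^n+S_{n-1}$ we get $tp^n<s_np^n$, so $t\le s_n-1$.

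In the second range, $k\ge M_{n-1}+1>S_{n-1}$. Again $tp^n+k\le s_np^n+S_{n-1}$ gives $t\le s_n-1$.

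In both cases this step uses only $\ell\le\min(r,s)$, and with it Lemma~\ref{pdiv} yields that $M(4)$ is invertible.
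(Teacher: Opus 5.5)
Your proposal is correct and follows the paper's proof essentially verbatim. You apply Lemma~\ref{pdiv} with $a=r_n+s_n-2t-1$, $b=r_n-t$ and size $t+1$, obtain $t\le s_n-1$ from $tp^n+k\le s_np^n+S_{n-1}$ together with $k>S_{n-1}$ in both ranges, and obtain $t>r_n+s_n-p$ from $\ell>r+s-p^{n+1}$ together with $k\le m_{n-1}+M_{n-1}$ (like the paper, you leave implicit that $b=r_n-t\ge 0$, which is immediate from $tp^n+k\le r_np^n+R_{n-1}$).
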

\begin{proof}
By Lemma~\ref{pdiv} it suffices to show that $r_n-t \leq r_n+s_n-2t-1$ and $(r_n+s_n-2t-1)+t=r_n+s_n-t-1<p$.  

Since $ t p^n+k \leq s_np^n+S_{n-1}$  and $k >S_{n-1}$ in both cases, $t \leq s_n-1$.  Thus $r_n-t \leq (r_n-t)+(s_n-t-1)=r_n+s_n-2t-1$.

In both cases $k \leq m_{n-1}+M_{n-1}$.  Since $t p^n+k >(r_n+s_n-p)p^n+m_{n-1}+M_{n-1}$, $t>r_n+s_n-p$.  Hence $p>r_n+s_n-t>r_n+s_n-t-1$.   It follows that $r_n+s_n-2 t-1<p$.
\end{proof}
\begin{proposition}\label{R3AndR>S}
Assume that $R_{n-1}>S_{n-1}$, $S_{n-1}+1 \leq k \leq R_{n-1}$,  and $t \leq \min(r_n,s_n-1)$.  Then $\lambda_\ell=
(r_n+s_n-2t)p^n$ by Corollary~\ref{RenaudCor}.  Hence $(g-1)^{(r_n+s_n-2t-1)p^n}$ maps $D_{(r_n+s_n-t)p^n+R_{n-1}+S_{n-1}-k}$ to $D_{(t+1)p^n+R_{n-1}+S_{n-1}-k}$.
\begin{enumerate}
\item Define $z_\ell \in D_{(t+1)p^n+R_{n-1}+S_{n-1}-k}$ by
\[z_\ell=(-1)^{R_{n-1}+S_{n-1}-k}\sum_{i=0}^t (-1)^i v_{i p^n+R_{n-1}+S_{n-1}+1-k,(t-i+1)p^n}
\]
The $(g-1)^{p^n-1}(z_\ell)=x_{tp^n+R_{n-1}+S_{n-1}+1-k}$.
\item Let $\zeta=M(4)^{-1}X$.  Define $y_\ell \in  D_{(r_n+s_n-t)p^n+R_{n-1}+S_{n-1}-k}$ by
\[y_\ell=(-1)^{R_{n-1}+S_{n-1}-k}\sum_{i=0}^t \zeta_{i+1,1}v_{(r_n-t+i)p^n+R_{n-1}+S_{n-1}+1-k,(s_n-t)p^n}.
\]
Then $(g-1)^{(r_n+s_n-2t-1)p^n}(y_\ell)=z_\ell$.
\end{enumerate}
\end{proposition}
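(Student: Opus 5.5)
I would mirror the proof of Proposition~\ref{R3AndR<S}, with the roles of the two tensor factors interchanged.

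\textbf{Part (1).} The target $D_{tp^n+R_{n-1}+S_{n-1}+1-k}$ has basis $v_{a,\,tp^n+R_{n-1}+S_{n-1}+2-k-a}$ for $1\le a\le tp^n+R_{n-1}+S_{n-1}+1-k$. By the lemma giving $(g-1)^n(v_{a,b})$, the coefficient of this basis vector in $(g-1)^{p^n-1}(z_\ell)$ is
\[\alpha_a=(-1)^{R_{n-1}+S_{n-1}-k}\sum_{i=0}^t(-1)^i\binom{p^n-1}{ip^n+R_{n-1}+S_{n-1}+1-k-a}.\]
The first index of $v_{ip^n+R_{n-1}+S_{n-1}+1-k,(t-i+1)p^n}$ drops by $ip^n+R_{n-1}+S_{n-1}+1-k-a$. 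One should check that this drop is at most $p^n-1$, so the second index, which drops by the complementary amount, stays at least $(t-i)p^n+1>0$ and no term is killed by the convention $v=0$.

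Define intervals $I_0=[1,\,R_{n-1}+S_{n-1}+1-k]$ and $I_j=[(j-1)p^n+R_{n-1}+S_{n-1}+2-k,\; jp^n+R_{n-1}+S_{n-1}+1-k]$ for $1\le j\le t$. Since $k\ge S_{n-1}+1$ we have $R_{n-1}+S_{n-1}+1-k\le R_{n-1}<p^n$, so $I_0$ is nonempty. These intervals partition $[1,\,tp^n+R_{n-1}+S_{n-1}+1-k]$, and for $a\in I_i$ exactly one binomial is nonzero. By the lemma $\binom{p^n-1}{m}=(-1)^m$,
\[\alpha_a=(-1)^{R_{n-1}+S_{n-1}-k+i+ip^n+R_{n-1}+S_{n-1}+1-k-a}=(-1)^{i+ip^n+1-a},\]
which equals $(-1)^{a-1}$, handling $p=2$ and $p$ odd as in Proposition~\ref{R1}. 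Hence $(g-1)^{p^n-1}(z_\ell)=x_{tp^n+R_{n-1}+S_{n-1}+1-k}$.

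\textbf{Part (2).} Apply Corollary~\ref{p^n2} with $c=r_n+s_n-2t-1$, which is less than $p$ by Lemma~\ref{M4}, to each $v_{(r_n-t+i)p^n+R_{n-1}+S_{n-1}+1-k,\,(s_n-t)p^n}$. One gets
\[\sum_{b}\binom{c}{b}\,v_{(r_n-t+i-c+b)p^n+R_{n-1}+S_{n-1}+1-k,\,(s_n-t-b)p^n}.\]
Set $a=r_n-t+i-c+b$, so the first index becomes $ap^n+R_{n-1}+S_{n-1}+1-k$. Note $r_n-t+i-c=i+1-s_n+t$. The second index is $(s_n-t-b)p^n=(t-a+1)p^n+(i-a)\cdot 0$; more precisely, since the total degree is fixed, it must be $(t-a+1)p^n$. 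Thus the surviving vectors are exactly $v_{ap^n+R_{n-1}+S_{n-1}+1-k,(t-a+1)p^n}$ with $0\le a\le t$: vectors with $a<0$ vanish, and $a\le t$ is forced by positivity of the second index. Their coefficient is $\binom{c}{b}=\binom{c}{r_n-t+j-i'}$ after reindexing $j=i+1$, $i'=a+1$, which is the $(a+1,i+1)$ entry of $M(4)$ (it depends on $b=s_n-t-1-i+a$, equivalently $c-b=r_n-t+i-a$).

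Interchanging the sums and using $M(4)\zeta=X$, the coefficient of $v_{ap^n+R_{n-1}+S_{n-1}+1-k,(t-a+1)p^n}$ becomes $(-1)^a$. Multiplying by the prefactor $(-1)^{R_{n-1}+S_{n-1}-k}$ reproduces $z_\ell$.

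\textbf{Main obstacle.} The delicate step is the index bookkeeping in part (2). I need to confirm that the binomial coefficient matches $\binom{r_n+s_n-2t-1}{r_n-t+j-i}$ rather than the shifted version used for $M(3)$; that is where this case differs from Proposition~\ref{R3AndR<S}. I also need to confirm that the truncation to $0\le a\le t$ is exact. Invertibility of $M(4)$ itself is supplied by Lemma~\ref{M4}.
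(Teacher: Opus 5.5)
Your argument follows the paper's proof step for step. In part (1) you isolate the unique nonzero binomial by splitting into blocks of length $p^n$ and use $\binom{p^n-1}{m}=(-1)^m$. In part (2) you apply Corollary~\ref{p^n2} with $c=r_n+s_n-2t-1<p$, identify $\binom{c}{b}=\binom{c}{r_n-t+i-a}$ as the $(a+1,i+1)$ entry of $M(4)$, and use $M(4)\zeta=X$. Part (1) is fine. One small correction there: $I_0$ is nonempty because $k\le R_{n-1}$, while $k\ge S_{n-1}+1$ is what keeps the binomial's lower index at most $p^n-1$ on $I_0$.

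The genuine problem is your claim in part (2) that the second index ``must be'' $(t-a+1)p^n$ ``since the total degree is fixed.''

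\begin{itemize}
\item \emph{The index comes out differently.} With the second index $(s_n-t)p^n$ as printed, your own $b=s_n-t-1-i+a$ gives $(s_n-t-b)p^n=(i-a+1)p^n$, not $(t-a+1)p^n$.
\item \emph{The degree argument is circular.} The $i$th summand $v_{(r_n-t+i)p^n+R_{n-1}+S_{n-1}+1-k,(s_n-t)p^n}$ lies in $D_{(r_n+s_n-2t+i)p^n+R_{n-1}+S_{n-1}-k}$. That equals the claimed $D_{(r_n+s_n-t)p^n+R_{n-1}+S_{n-1}-k}$ only when $i=t$, so the argument presupposes exactly what fails.
\item \emph{The literal statement is false.} Take $p=5$, $n=1$, $r_n=s_n=2$, $(R_{n-1},S_{n-1},k)=(2,1,2)$ and $t=1$. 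Then $c=1$, $M(4)=\begin{pmatrix}1&0\\1&1\end{pmatrix}$, $\zeta_{1,1}=1$, $\zeta_{2,1}=-2$, and $y_\ell=-(v_{7,5}-2v_{12,5})$. Its image under $(g-1)^{5}$ is $-(v_{2,5}-2v_{7,5})$, which has the component $-v_{2,5}\in D_6$. But $z_\ell=-(v_{2,10}-v_{7,5})\in D_{11}$.
\end{itemize}

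The second index should read $(s_n-i)p^n$. This misprint also appears in the paper's proof; compare Proposition~\ref{R3AndR<S} and Lemma~\ref{M=R1}, where the index is $(s_n-i)p^n$. State that correction explicitly. Then $(s_n-i-b)p^n=(t-a+1)p^n$ holds exactly, your truncation to $0\le a\le t$ is exact, and the rest of your argument goes through unchanged.
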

Note that
\[z_\ell=\sum_{i=0}^t (-1)^i \sum_{j=1}^{k-S_{n-1}}c_{k-S_{n-1}+1-j,j}v_{i p^n+R_{n-1}+S_{n-1}+j-k,(t-i+1)p^n+1-j}
\]
with a corresponding expression for $y_\ell$ since the $k'$th anti-diagonal of $C=J(R_{n-1},p^n-S_{n-1})$, with $1 \leq k' \leq R_{n-1}-S_{n-1}$,  is $((-1)^{R_{n-1}-k'},0,\dots,0)$, so $c_{k-S_{n-1},1}=(-1)^{R_{n-1}+S_{n-1}-k}$, but we do not need to make that recursive call here.
\begin{proof}
(1) Here $\lambda_\ell=(r_n+s_n-2t)p^n$.  Recall that  $\mathcal{B}_{tp^n+R_{n-1}+S_{n-1}+1-k}$ consists of
$v_{a, tp^n+R_{n-1}+S_{n-1}+2-k-a}$ where $1 \leq a \leq tp^n+R_{n-1}+S_{n-1}+1-k$.
The coefficient $\alpha_a$ of $v_{a,tp^n+R_{n-1}+S_{n-1}+2-k-a}$ in $(g-1)^{(p^n-1}(z_\ell)$ is
\[(-1)^{R_{n-1}+S_{n-1}-k}\sum_{i=0}^t(-1)^i \binom{p^n-1}{i p^n+R_{n-1}+S_{n-1}+1-k-a}.\]
By a now familiar argument there is only one $i$ such that $\binom{p^n-1}{i p^n+R_{n-1}+S_{n-1}+1-k-a} \neq 0$.
Hence
\begin{align*}
\alpha_a&=(-1)^{R_{n-1}+S_{n-1}-k+i} \binom{p^n-1}{i p^n+R_{n-1}+S_{n-1}+1-k-a}\\
&=
(-1)^{R_{n-1}+S_{n-1}-k+i}(-1)^{i p^n+R_{n-1}+S_{n-1}+1-k-a}\\
&=(-1)^{a-1}.
\end{align*}
We have proved $(g-1)^{p^n-1}(z_\ell)=x_{tp^n+R_{n-1}+S_{n-1}+1-k}$.

(2) By Corollary~\ref{p^n2}, since the only vectors in $\mathcal{B}_{tp^n+R_{n-1}+S_{n-1}+1-k}$ with nonzero coefficients in
$(g-1)^{(r_n+s_n-2t-1)p^n}(v_{(r_n-t+i)p^n+R_{n-1}+S_{n-1}+1-k,(s_n-t)p^n})$ are \\$ v_{a p^n+R_{n-1}+S_{n-1}+1-k,(t-a+1)p^n}$ with $0 \leq a \leq t$,
\begin{align*}
(g-1)&^{(r_n+s_n-2t-1)p^n}(v_{(r_n-t+i)p^n+R_{n-1}+S_{n-1}+1-k,(s_n-t)p^n})\\
&=\sum_{a=0}^t \binom{r_n+s_n-2t-1}{r_n-t+i-a} v_{a p^n+R_{n-1}+S_{n-1}+1-k,(t-a+1)p^n},
\end{align*}
using the fact that $r_n+s_n-2t-1<p$ by Lemma~\ref{M4}.

When $1 \leq i \leq t+1$ the $(i,1)$ entry of $M(4) \zeta$ is
\[(-1)^{i-1}=\sum_{j=1}^{t+1}\binom{r_n+s_n-2 t-1}{r_n-t+j-i}\zeta_{j,1}=\sum_{j=0}^t \binom{r_n+s_n-2 t-1}{r_n-t+j+1-i}\zeta_{j+1,1}.
\]
Then
\begin{align*}
(g-1)&^{(r_n+s_n-2t-1)p^n}(y_\ell)\\
&=(-1)^{R_{n-1}+S_{n-1}-k}\sum_{i=0}^t \zeta_{i+1,1}
\left(\sum_{a=0}^t \binom{r_n+s_n-2t-1}{r_n-t+i-a} v_{a p^n+R_{n-1}+S_{n-1}+1-k,(t-a+1)p^n}\right)\\
&=(-1)^{R_{n-1}+S_{n-1}-k}\sum_{a=0}^t 
\left(\sum_{i=0}^t \binom{r_n+s_n-2t-1}{r_n-t+i-a}\zeta_{i+1,1}\right)
v_{a p^n+R_{n-1}+S_{n-1}+1-k,(t-a+1)p^n}\\
&=(-1)^{R_{n-1}+S_{n-1}-k}\sum_{a=0}^t  (-1)^a v_{a p^n+R_{n-1}+S_{n-1}+1-k,(t-a+1)p^n}\\
&=z_\ell.
\end{align*}
We have proved $(g-1)^{(r_n+s_n-2t-1)p^n}(y_\ell)=z_\ell$.
\end{proof}

In the next two propositions $p^n-\max(-e_n,0)=m_{n-1}+M_{n-1}$ if $m_{n-1}+M_{n-1} \leq p^n$ and equals $p^n$ otherwise.  First we assemble some results for Proposition~\ref{R4}.

\begin{lemma}\label{R4aux}
When $M_{n-1}+1 \leq k \leq p^n-\max(-e_n,0)$,
\begin{enumerate}
\item $\mu_{m_{n-1}+M_{n-1}+1-k}=k_0+k_\infty-1-m_{n-1}-M_{n-1}$,
\item $p^n-\mu_{m_{n-1}+M_{n-1}+1-k}-1=p^n+m_{n-1}+M_{n-1}-k_0-k_\infty$,
\item $R_{n-1}+S_{n-1}+1-k+\mu_{m_{n-1}+M_{n-1}+1-k}=k'$.
\end{enumerate}
\end{lemma}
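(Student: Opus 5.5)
The plan is to reduce everything to Lemma~\ref{Multiplicity} by re-indexing. Throughout, put $\kappa=m_{n-1}+M_{n-1}+1-k$. As $k$ runs over $M_{n-1}+1 \leq k \leq p^n-\max(-e_n,0)$, the index $\kappa$ runs over $\max(e_n,0)+1 \leq \kappa \leq m_{n-1}$. Indeed, $k\geq M_{n-1}+1$ gives $\kappa\leq m_{n-1}$. For the other end, $p^n-\max(-e_n,0)$ equals either $m_{n-1}+M_{n-1}$ or $p^n=m_{n-1}+M_{n-1}-e_n$, and in both cases $\kappa \geq \max(e_n,0)+1$. So $\mu_\kappa$ is defined, and it is exactly the quantity $\mu_{m_{n-1}+M_{n-1}+1-k}$ appearing in the fourth case of Corollary~\ref{RenaudCor}.

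The first and essential step is to pin down what $k_0$, $k_\infty$ and $k'$ mean for $k$ in this range. The earlier notation only covered $1\leq k\leq m_{n-1}$, so I will take them relative to the fourth-case block. That is, $k_0$ and $k_\infty$ are the smallest and largest $i$ in $[M_{n-1}+1,\,p^n-\max(-e_n,0)]$ with $\mu_{m_{n-1}+M_{n-1}+1-i}=\mu_\kappa$, and $k'=k_0+k_\infty-k$. Since $i\mapsto m_{n-1}+M_{n-1}+1-i$ reverses order, this gives $k_0=m_{n-1}+M_{n-1}+1-\kappa_\infty$ and $k_\infty=m_{n-1}+M_{n-1}+1-\kappa_0$, where $\kappa_0,\kappa_\infty$ are the old quantities attached to $\kappa$. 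Hence
\[k_0+k_\infty=2(m_{n-1}+M_{n-1}+1)-\kappa_0-\kappa_\infty.\]

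With this in hand, part (1) is immediate from Lemma~\ref{Multiplicity}. Using $R_{n-1}+S_{n-1}=m_{n-1}+M_{n-1}$ we get
\[\mu_\kappa=m_{n-1}+M_{n-1}-\kappa_0-\kappa_\infty+1,\]
and substituting $\kappa_0+\kappa_\infty=2(m_{n-1}+M_{n-1}+1)-k_0-k_\infty$ gives $\mu_\kappa=k_0+k_\infty-1-m_{n-1}-M_{n-1}$. Part (2) follows by computing $p^n-\mu_\kappa-1$ directly from (1). For part (3), substitute (1) into $R_{n-1}+S_{n-1}+1-k+\mu_\kappa$. The terms $m_{n-1}+M_{n-1}$ cancel, leaving $k_0+k_\infty-k=k'$.

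The only real obstacle is the bookkeeping in the first step. The notation must be fixed consistently with how $k'$ will be used in Proposition~\ref{R4}, where it should play the role that $k'$ played in Theorem~\ref{ZThm} for the pairs $(R_{n-1},p^n-S_{n-1})$ and $(p^n-R_{n-1},S_{n-1})$. The order reversal between $k$ and $\kappa$ must also be tracked correctly. Once that is settled, all three identities are one-line algebra.
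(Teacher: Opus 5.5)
Your proposal is correct and is essentially the paper's proof. The paper also applies Lemma~\ref{Multiplicity} at the index $m_{n-1}+M_{n-1}+1-k$, silently uses the order-reversing reindexing $(m_{n-1}+M_{n-1}+1-k)_0+(m_{n-1}+M_{n-1}+1-k)_\infty=2(m_{n-1}+M_{n-1}+1)-k_0-k_\infty$, and then gets (2) and (3) by the same one-line algebra. The paper never states what $k_0,k_\infty$ mean for $k>m_{n-1}$; your explicit definition is the right one, since it agrees with the substitution $(k,k')\mapsto(k-S_{n-1},k'-S_{n-1})$ made in Proposition~\ref{R4}. The only fact it tacitly needs is $\kappa_0\geq\max(e_n,0)+1$, which holds because $\mu_\kappa<p^n=\mu_i$ for $i\leq\max(e_n,0)$ by Lemma~\ref{Tensors}(1).
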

\begin{proof}
Now
\begin{align*}
\mu_{m_{n-1}+M_{n-1}+1-k}&=m_{n-1}+M_{n-1}+1-(m_{n-1}+M_{n-1}+1-k)_0\\
&\quad-(m_{n-1}+M_{n-1}+1-k)_\infty\\
&=k_0+k_\infty-1-m_{n-1}-M_{n-1},
\end{align*}
\[p^n-\mu_{m_{n-1}+M_{n-1}+1-k}-1=p^n+m_{n-1}+M_{n-1}-k_0-k_\infty,\]
and
\begin{align*}
R_{n-1}+S_{n-1}+1-k+\mu_{m_{n-1}+M_{n-1}+1-k}&=R_{n-1}+S_{n-1}+1-k\\
&\quad+k_0+k_\infty-1-m_{n-1}-M_{n-1}\\
&=k'.
\end{align*}
\end{proof}

\begin{proposition}\label{R4}
Assume that $t \leq \min(r_n-1,s_n-1)$ and $M_{n-1}+1 \leq k \leq p^n-\max(-e_n,0)$.   Then $\lambda_\ell=(r_n+s_n-2t)p^n-\mu_{m_{n-1}+M_{n-1}+1-k}$, $(g-1)^{(r_n+s_n-2t-1)p^n}$ maps $D_{(r_n+s_n-t)p^n+R_{n-1}+S_{n-1}-k}$ to $D_{(t+1) p^n+R_{n-1}+S_{n-1}-k}$, and $D_{(t+1) p^n+R_{n-1}+S_{n-1}-k}$ is mapped to $D_{t p^n+k'}$ by $(g-1)^{p^n-\mu_{m_{n-1}+M_{n-1}+1-k}-1}=(g-1)^{p^n+m_{n-1}+M_{n-1}-k_0-k_\infty}$.    Assume that $C=J(R_{n-1},p^n-S_{n-1})$ and $B=J(p^n-R_{n-1},S_{n-1})$ are known.  
\begin{enumerate}
\item Define $z_\ell \in D_{(t+1) p^n+R_{n-1}+S_{n-1}-k}$ by
\begin{align*}
z_\ell&=\sum_{i=0}^t (-1)^i \sum_{j=1}^{k-S_{n-1}} c_{k-S_{n-1}+1-j,j}v_{i p^n+R_{n-1}+S_{n-1}+j-k,(t+1-i)p^n+1-j}\\
&\quad+(-1)^{R_{n-1}} \sum_{i=0}^t (-1)^i \sum_{j=1}^{k-R_{n-1}}b_{k-R_{n-1}+1-j,j}v_{(i+1)p^n+R_{n-1}+j-k,(t-i)p^n+S_{n-1}+1-j}
\end{align*}
Then $(g-1)^{p^n -\mu_{m_{n-1}+M_{n-1}+1-k}-1}(z_\ell)=x_{ t p^n+k'}$.  
\item Let $\zeta=M(4)^{-1} X$ and $\eta=M(3)^{-1}X$, define
$y_\ell \in D_{(r_n+s_n-t)p^n+R_{n-1}+S_{n-1}-k}$ by
\begin{align*}
y_\ell&=\sum_{i=0}^t \zeta_{i+1,1} \sum_{j=1}^{k-S_{n-1}} c_{k-S_{n-1}+1-j,j}v_{(r_n-t+i)p^n+R_{n-1}+S_{n-1}+j-k,(s_n-t)p^n+1-j}\\
&\quad+(-1)^{R_{n-1}} \sum_{i=0}^t \eta_{i+1,1} \sum_{j=1}^{k-R_{n-1}}b_{k-R_{n-1}+1-j,j}
v_{(r_n-t+i)p^n+R_{n-1}+j-k,(s_n-t)p^n+S_{n-1}+1-j}
\end{align*}
Then $(g-1)^{(r_n+s_n-2t-1)p^n}(y_\ell)=z_\ell$.
\end{enumerate}
\end{proposition}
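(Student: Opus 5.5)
Part (2) follows the template of Propositions~\ref{R3AndR<S} and~\ref{R3AndR>S}; part (1) is where the recursive input from $B$ and $C$ enters. For (2), write $y_\ell=y'_\ell+(-1)^{R_{n-1}}y''_\ell$ and $z_\ell=z'_\ell+(-1)^{R_{n-1}}z''_\ell$, where the primed pieces involve $c$ and the double-primed pieces involve $b$. By Corollary~\ref{p^n2}, $(g-1)^{(r_n+s_n-2t-1)p^n}$ sends $v_{(r_n-t+i)p^n+R_{n-1}+S_{n-1}+j-k,(s_n-t)p^n+1-j}$ to
\[\sum_{a=0}^t\binom{r_n+s_n-2t-1}{r_n-t+i-a}v_{ap^n+R_{n-1}+S_{n-1}+j-k,(t+1-a)p^n+1-j}.\]
It sends $v_{(r_n-t+i)p^n+R_{n-1}+j-k,(s_n-t)p^n+S_{n-1}+1-j}$ to
\[\sum_{a=0}^t\binom{r_n+s_n-2t-1}{r_n-t+i-a-1}v_{(a+1)p^n+R_{n-1}+j-k,(t-a)p^n+S_{n-1}+1-j}.\]
Lemmas~\ref{M4} and~\ref{M3} give $r_n+s_n-2t-1<p$ and invertibility of $M(4)$ and $M(3)$, so $M(4)\zeta=X$ and $M(3)\eta=X$ yield the coefficients $(-1)^a$. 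This gives $z'_\ell$ and $z''_\ell$ exactly as in those propositions.

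For (1), I would mimic Theorem~\ref{ZThm}. Put $\nu=p^n-\mu_{m_{n-1}+M_{n-1}+1-k}$. By Lemma~\ref{R4aux}(2), $\nu-1=p^n+m_{n-1}+M_{n-1}-k_0-k_\infty$. The coefficient of $v_{z,tp^n+k'+1-z}$ in $(g-1)^{\nu-1}(z_\ell)$ is
\[g_t(z)=\sum_{i=0}^t(-1)^iC_i(z)+(-1)^{R_{n-1}}\sum_{i=0}^t(-1)^iB_i(z),\]
with
\[C_i(z)=\sum_j c_{k-S_{n-1}+1-j,j}\binom{\nu-1}{ip^n+R_{n-1}+S_{n-1}+j-k-z},\qquad B_i(z)=\sum_j b_{k-R_{n-1}+1-j,j}\binom{\nu-1}{(i+1)p^n+R_{n-1}+j-k-z}.\]
I must show $g_t(z)=(-1)^{z-1}$ for $1\le z\le tp^n+k'$.

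First, an analogue of Lemma~\ref{Reduction}: using Lemma~\ref{R4aux}(3), on each block $ip^n<z\le(i+1)p^n$ only $C_i$ and $B_i$ (and possibly $C_{i+1}$ near the top of the block) are nonzero. This comes from the same binomial range checks, and it gives the shift relation $g_t(z+p^n)=-g_t(z)$ as in Corollary~\ref{period}. That reduces everything to the $i=0$ window, namely the claim $C_0(z)+(-1)^{R_{n-1}}B_0(z)=(-1)^{z-1}$ there, together with the companion identity across $R_{n-1}$ (or $S_{n-1}$) as in Lemma~\ref{DiAi+1}.

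To prove the window identity, I would repeat the proof of Lemma~\ref{AiDi} with the pair $(V_{R_{n-1}}\otimes V_{p^n-S_{n-1}},V_{p^n-R_{n-1}}\otimes V_{S_{n-1}})$ in place of $(V_{R_{n-1}}\otimes V_{S_{n-1}},V_{p^n-R_{n-1}}\otimes V_{p^n-S_{n-1}})$. This time the auxiliary product is $V_{R_{n-1}}\otimes V_{S_{n-1}}$, whose summands of dimension $\mu$ supply the vanishing relations. By Lemma~\ref{Tensors}(3),(4), $C$ and $B$ describe generators of summands of dimension $\nu$. Take the generator $w$ of the $\mu$-summand of $V_{R_{n-1}}\otimes V_{S_{n-1}}$ indexed by $m_{n-1}+M_{n-1}+1-k$ and embed it diagonally into $V_{p^n}\otimes V_{p^n}$, as in Step~2 of Lemma~\ref{AiDi}. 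Its image under $(g-1)^{\mu}$ then has vanishing middle coefficients. Comparing with the unique solutions from Lemma~\ref{Unique} identifies the two boundary groups of coefficients with the $c$- and $b$-antidiagonals, with the sign $(-1)^{R_{n-1}}$. Lemma~\ref{Mult} then passes from $k_0$ to general $k$ via the index shift.

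The main obstacle is this identification: checking the sign $(-1)^{R_{n-1}}$ and the exact offsets so that the embedded element matches $C_0+(-1)^{R_{n-1}}B_0$. Its overlap region must also be verified, and I expect to need the symmetry trick of Lemma~\ref{DiAi+1}, swapping $(R_{n-1},S_{n-1})\leftrightarrow(R_{n-1},p^n-S_{n-1})$, to cover the second half of the window.
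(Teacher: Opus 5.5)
Your part (2) is the paper's argument. Your reduction and periodicity steps in (1) are the correct instances of Lemma~\ref{Reduction} and Corollary~\ref{period}.

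For (1), however, the paper re-proves nothing. Write $R=R_{n-1}$ and $S=S_{n-1}$. Theorem~\ref{ZThm} holds for any pair with $m_{n-1}>0$, and (1) is exactly its instance for the pair $(R,p^n-S)$ under the substitutions
\begin{itemize}
\item $(A,D)\to(C,B)$, $k\to k-S$, $k-e_n\to k-R$;
\item $\mu_k\to p^n-\mu_{m_{n-1}+M_{n-1}+1-k}$;
\item upper limit $tp^n+p^n-(p^n-S)+(k'-S)=tp^n+k'$.
\end{itemize}
The only input needed is that $V_{p^n-\mu_{m_{n-1}+M_{n-1}+1-k}}$ is the $(k-S)$th summand of $V_R\otimes V_{p^n-S}$. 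This follows from Lemma~\ref{Tensors}(4), treating $R\le S$ and $R>S$ separately. It puts $k-S$ in the admissible range $\max(R-S,0)+1\le k-S\le\min(R,p^n-S)$. Your $g_t$, $C_i$, $B_i$ are literally $f_t$, $A_i$, $D_i$ after this substitution.

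Where you instead re-run Lemma~\ref{AiDi}, the plan has a genuine gap: your auxiliary product is wrong.
\begin{itemize}
\item Substituting $S\to p^n-S$ in Step~1 turns the auxiliary $V_{p^n-R}\otimes V_S$ into $V_{p^n-R}\otimes V_{p^n-S}$, not $V_R\otimes V_S$. Its non-projective summands are also the $V_{\mu_i}$, by Lemma~\ref{Tensors}(2).
\item This module sits in the top-right corner of $V_{p^n}\otimes V_{p^n}$ via the shift $(R,S)$. Only this embedding puts the two boundary groups at first index $\le R$ and second index $\le S$. That is where the $c$- and $b$-entries of $z_\ell$ lie, inside $D_{p^n+R+S-k}$ when $t=0$, with $(-1)^R$ on the $b$-group.
\item A generator of $V_R\otimes V_S$ embedded as in Step~2 (it must be the top-right rectangle, the one that is a quotient) is shifted by $(p^n-R,p^n-S)$. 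The resulting element lies in $D_{2p^n-k}$.
\item That run, with the index $m_{n-1}+M_{n-1}+1-k$ you chose, proves Equation~\ref{Eq1} for the complementary pair $(p^n-R,S)$. There $B$ plays $A$, $C$ plays $D$, and the sign is $(-1)^{p^n-R}$ on the $c$-group.
\item Through the shift by $R$ in Lemma~\ref{DiAi+1}, this gives only the companion identity $(-1)^{R}B_0(z)-C_1(z)=(-1)^{z-1}$ for $R+1\le z\le p^n$. It does not give $C_0(z)+(-1)^{R}B_0(z)=(-1)^{z-1}$ for $1\le z\le R$.
\end{itemize}

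The symmetry trick cannot supply the missing half. It only converts Equation~\ref{Eq1} for a pair into Equation~\ref{Eq2} for its complement, here $(R,p^n-S)\leftrightarrow(p^n-R,S)$. It is not the swap $(R,S)\leftrightarrow(R,p^n-S)$ that you name; that swap is just the substitution carrying Theorem~\ref{ZThm} to this proposition.

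As a result, the overlaps $ip^n+k'-S<z\le ip^n+R$, where $C_i$ and $B_i$ both contribute, remain unproved. This is exactly the step you flag as the main obstacle. To close it, either run Lemma~\ref{AiDi} with $V_{p^n-R}\otimes V_{p^n-S}$ as the auxiliary, or simply cite Theorem~\ref{ZThm} as the paper does.
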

\begin{proof}
(1) The coefficient $\alpha_a$ of $v_{a, t p^n+k'+1-a}$ in
$(g-1)^{p^n -\mu_{m_{n-1}+M_{n-1}+1-k}-1}(z_\ell)$ is
\begin{align*}
&\sum_{i=0}^t (-1)^i \sum_{j=1}^{k-S_{n-1}} c_{k-S_{n-1}+1-j,j}\binom{p^n -\mu_{m_{n-1}+M_{n-1}+1-k}-1}
{i p^n+R_{n-1}+S_{n-1}+j-k-a}\\
&\quad+(-1)^{R_{n-1}} \sum_{i=0}^t (-1)^i \sum_{j=1}^{k-R_{n-1}}b_{k-R_{n-1}+1-j,j}
\binom{p^n -\mu_{m_{n-1}+M_{n-1}+1-k}-1}{(i+1)p^n+R_{n-1}+j-k-a}.
\end{align*}
Recall that
\[V_{R_{n-1}} \otimes V_{p^n-S_{n-1}}
\cong \max(f_n,0) \cdot V_{p^n}\oplus
V_{p^n-\mu_{m_{n-1}}}\oplus \dots \oplus V_{p^n-\mu_{\max(e_n,0)+1}}.\]
When $R_{n-1}\leq S_{n-1}$, so $\max(f_n,0)=0$,  $V_{p^n-\mu_{m_{n-1}+1-(k-M_{n-1})}}$ is the
$k-M_{n-1}=k-S_{n-1}$ summand of $V_{R_{n-1}} \otimes V_{p^n-S_{n-1}}$.

When $R_{n-1}>S_{n-1}$, so $\max(f_n,0)=R_{n-1}-S_{n-1}$,$V_{p^n-\mu_{m_{n-1}+1-(k-M_{n-1})}}$ is the
$R_{n-1}-S_{n-1}+(k-M_{n-1})=k-S_{n-1}$ summand of $V_{R_{n-1}} \otimes V_{p^n-S_{n-1}}$.

Apply Theorem~\ref{ZThm} with $S_{n-1}$ replaced by $p^n-S_{n-1}$, $(k,k')$ replaced by $(k-S_{n-1},k'-S_{n-1})$, $k-e_n$ replaced by $k-S_{n-1}-(R_{n-1}-S_{n-1})=k-R_{n-1}$, $(A,D)$ replaced by $(C,B)$, $ t p^n+p^n-S_{n-1}+k'$ replaced by $t p^n+p^n-(p^n-S_{n-1})+k'-S_{n-1}=t p^n+k'$, and $\mu_k$ replaced by $p^n-\mu_{m_{n-1}+1-(k-M_{n-1})}$ to conclude that $\alpha_a=f_t(a)=(-1)^{a-1}$ when $1 \leq a \leq  t p^n+k'$.  We have proved our result.

(2) Let 
\[z'_\ell=\sum_{i=0}^t (-1)^i \sum_{j=1}^{k-S_{n-1}} c_{k-S_{n-1}+1-j,j}v_{i p^n+R_{n-1}+S_{n-1}+j-k,(t+1-i)p^n+1-j},\]
\[z''_\ell=\sum_{i=0}^t (-1)^i \sum_{j=1}^{k-R_{n-1}}b_{k-R_{n-1}+1-j,j}v_{(i+1)p^n+R_{n-1}+j-k,(t-i)p^n+S_{n-1}+1-j},\]
\[y'_\ell=\sum_{i=0}^t \zeta_{i+1,1} \sum_{j=1}^{k-S_{n-1}} c_{k-S_{n-1}+1-j,j}v_{(r_n-t+i)p^n+R_{n-1}+S_{n-1}+j-k,(s_n-t)p^n+1-j},\]
and 
\[y''_\ell=\sum_{i=0}^t \eta_{i+1,1} \sum_{j=1}^{k-R_{n-1}}b_{k-R_{n-1}+1-j,j}
v_{(r_n-t+i)p^n+R_{n-1}+j-k,(s_n-t)p^n+S_{n-1}+1-j},\]
so $z_\ell=z'_\ell+(-1)^{R_{n-1}} z''_\ell$ and $y_\ell=y'_\ell+(-1)^{R_{n-1}} y''_\ell$.  We will show that $(g-1)^{(r_n+s_n-2t-1)p^n}(y'_\ell)=z'_\ell$ and $(g-1)^{(r_n+s_n-2t-1)p^n}(y''_\ell)=z''_\ell$.

When $1 \leq i \leq t+1 $ the $(i,j)$ entry of $M(4)\zeta$ is
\[
(-1)^{i-1}=\sum_{j=1}^{t+1} \binom{r_n+s_n-2t-1}{r_n-t+j-i}\zeta_{j,1}=
\sum_{j=0}^t\binom{r_n+s_n-2t-1}{r_n-t+j+1-i} \zeta_{j+1,1}.
\]
By Corollary~\ref{p^n2}, since the only vectors in $\mathcal{B}_{(t+1)p^n+R_{n-1}+S_{n-1}-k}$ with nonzero coefficients in 
$(g-1)^{(r_n+s_n-2t-1)p^n}(v_{(r_n-t+i)p^n+R_{n-1}+S_{n-1}+j-k,(s_n-t)p^n+1-j})$ are \\$v_{a p^n+R_{n-1}+S_{n-1}+j-k,(t+1-a)p^n+1-j}$ with $0 \leq a \leq t$,
\begin{align*}
(g-1)&^{(r_n+s_n-2t-1)p^n}(v_{(r_n-t+i)p^n+R_{n-1}+S_{n-1}+j-k,(s_n-t)p^n+1-j})\\
&=\sum_{a=0}^t \binom{r_n+s_n-2t-1}{r_n-t+i-a}v_{a p^n+R_{n-1}+S_{n-1}+j-k,(t+1-a)p^n+1-j},
\end{align*}
using the fact that $r_n+s_n-2t-1<p$ by Lemma~\ref{M3}.
The two last computations combine in the usual way to show $(g-1)^{(r_n+s_n-2t-1)p^n}(y'_\ell)=z'_\ell$.

When $1 \leq i \leq t+1 $ the $(i,j)$ entry of $M(3) \eta$ is
\[(-1)^{i-1}=\sum_{j=1}^{t+1}\binom{r_n+s_n-2t-1}{r_n-t+j-i-1}\eta_{j,1}=\sum_{j=0}^{t}\binom{r_n+s_n-2t-1}{r_n-t+j-i}\eta_{j+1,1}.\]

By Corollary~\ref{p^n2}, since the only vectors in $\mathcal{B}_{(t+1)p^n+R_{n-1}+S_{n-1}-k}$ with nonzero coefficients in 
$(g-1)^{(r_n+s_n-2t-1)p^n}(v_{(r_n-t+i)p^n+R_{n-1}+j-k,(s_n-t)p^n+S_{n-1}+1-j})$ are 
\\$v_{(a+1)p^n+R_{n-1}+j-k,(t-a)p^n+S_{n-1}+1-j}$ with $0 \leq a \leq t$,
\begin{align*}
(g-1)&^{(r_n+s_n-2t-1)p^n}(v_{(r_n-t+i)p^n+R_{n-1}+j-k,(s_n-t)p^n+S_{n-1}+1-j})\\
&=\sum_{a=0}^t \binom{r_n+s_n-2t-1}{r_n-t+i-a-1}v_{(a+1)p^n+R_{n-1}+j-k,(t-a)p^n+S_{n-1}+1-j},
\end{align*}
using the fact that $r_n+s_n-2t-1<p$ by Lemma~\ref{M4}.
The two last computations combine in the usual way to show $(g-1)^{(r_n+s_n-2t-1)p^n}(y''_\ell)=z''_\ell$.
\end{proof}

\begin{lemma}\label{M5}
Suppose that $M(5)$ is $(t+1) \times (t+1)$ with $(i,j)$ entry  $\binom{r_n+s_n-2 t-2}{r_n-t+j-i-1}$ and that $m_{n-1}+M_{n-1}+1 \leq k \leq p^n$.  Then $M(5)$ is invertible and $r_n+s_n-2 t-2<p$.
\end{lemma}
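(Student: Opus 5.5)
Following the pattern of Lemmas~\ref{M1}, \ref{M3} and \ref{M4}, the plan is to invoke Lemma~\ref{pdiv} with $a=r_n+s_n-2t-2$ and $b=r_n-t-1$ for a $(t+1)\times(t+1)$ matrix. It then suffices to establish two inequalities:
\[
r_n-t-1 \leq r_n+s_n-2t-2
\qquad\text{and}\qquad
(r_n+s_n-2t-2)+t=r_n+s_n-t-2<p.
\]
The first also requires $b\ge 0$, so we need $t\le r_n-1$. The second immediately gives $r_n+s_n-2t-2<p$, which is the other assertion of the lemma.

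\textbf{First inequality.} The hypothesis $m_{n-1}+M_{n-1}+1\le k\le p^n$ gives $k>m_{n-1}+M_{n-1}\ge\max(R_{n-1},S_{n-1})$. From $tp^n+k\le \min(r_np^n+R_{n-1},\,s_np^n+S_{n-1})$ together with $k>R_{n-1}$ we get $t\le r_n-1$. Likewise $k>S_{n-1}$ gives $t\le s_n-1$. Hence
\[
r_n-t-1\le (r_n-t-1)+(s_n-t-1)=r_n+s_n-2t-2 ,
\]
and $b=r_n-t-1\ge 0$.

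\textbf{Second inequality.} Here we cannot use $k\le m_{n-1}+M_{n-1}$ as in the earlier lemmas, since now $k$ exceeds $m_{n-1}+M_{n-1}$. Instead, use $tp^n+k>r+s-p^{n+1}=(r_n+s_n-p)p^n+m_{n-1}+M_{n-1}$ together with $k\le p^n$. This gives $(t+1)p^n>(r_n+s_n-p)p^n$, so $t+1>r_n+s_n-p$ and hence $t\ge r_n+s_n-p$. Therefore $r_n+s_n-t-2\le p-2<p$. The case $r+s\le p^{n+1}$ gives the same bound, because $\ell\ge 1$ and $\max(r+s-p^{n+1},0)$ only weakens the hypothesis on $\ell$ there, while $r_n+s_n-t-2\le r_n+s_n-2<2p$ alone is not enough. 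So I will explicitly note that when $r+s\le p^{n+1}$ we have $(r_n+s_n)p^n+R_{n-1}+S_{n-1}\le p^{n+1}$, which gives $r_n+s_n\le p$ and hence $r_n+s_n-t-2<p$ directly.

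\textbf{Main obstacle.} The only delicate point is this second inequality. The argument used in Lemmas~\ref{M1}--\ref{M4} relied on $k\le m_{n-1}+M_{n-1}$, which fails in this case. The fix is the weaker bound $k\le p^n$, which costs one in the inequality on $t$. That loss is exactly compensated by the extra $-1$ in the upper entry $r_n+s_n-2t-2$ of $M(5)$.
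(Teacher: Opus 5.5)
Your proposal is correct and follows the paper's proof essentially verbatim. You apply Lemma~\ref{pdiv} with $a=r_n+s_n-2t-2$ and $b=r_n-t-1$, get $t\le\min(r_n-1,s_n-1)$ from $k>\max(R_{n-1},S_{n-1})$, and get $t\ge r_n+s_n-p$ from $tp^n+k>(r_n+s_n-p)p^n+m_{n-1}+M_{n-1}$ together with $k\le p^n$. Your separate treatment of the case $r+s\le p^{n+1}$ is harmless but unnecessary, since $\ell\ge\max(r+s-p^{n+1},0)+1$ already gives $\ell>r+s-p^{n+1}$ in every case.
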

\begin{proof}
By Lemma~\ref{pdiv} it suffices to show that $r_n-t-1 \leq r_n+s_n-2 t-2$ and $( r_n+s_n-2 t-2)+t=r_n+s_n-t-2<p$.

Since $t p^n+k \leq r_n p^n+R_{n-1}$ and $t p^n+k \leq s_n p^n+S_{n-1}$ and $k>\max(R_{n-1},S_{n-1})$, $t \leq \min(r_n-1,s_n-1)$.  Thus $r_n+s_n-2t-2=(r_n-t-1)+(s_n-t-1) \geq r_n-t-1$.

Since $t p^n+k>(r_n+s_n-p)p^n+m_{n-1}+M_{n-1}$ and $k >m_{n-1}+M_{n-1}$, $t \geq r_n+s_n-p$ because if $t \leq r_n+s_n-p-1$, then $t p^n+k \leq (r_n+s_n-p)p^n$.   It follows that $p \geq r_n+s_n-t >r_n+s_n-t-2$.  Hence $r_n+s_n-2 t-2<p$.
\end{proof}
Now we look at the case when $m_{n-1}+M_{n-1}+1 \leq k \leq p^n$, which only occurs when $m_{n-1}+M_{n-1}<p^n$. 
\begin{proposition}\label{R5}
Assume that $m_{n-1}+M_{n-1}+1 \leq k \leq p^n$ and $t \leq \min(r_n-1,s_n-1)$.  Then $\lambda_\ell=(r_n+s_n-2t-1)p^n$ by Corollary~\ref{RenaudCor}.  Hence $(g-1)^{(r_n+s_n-2t-2)p^n}$ maps $D_{(r_n+s_n-t)p^n+R_{n-1}+S_{n-1}-k}$ to
$D_{(t+2)p^n+R_{n-1}+S_{n-1}-k}$.
\begin{enumerate}
\item Define $z_\ell \in D_{(t+2)p^n+R_{n-1}+S_{n-1}-k}$ by
\[z_\ell=(-1)^{R_{n-1}}\sum_{i=0}^t (-1)^i (-1)^{k-S_{n-1}-1}v_{(i +1)p^n+R_{n-1}+S_{n-1}+1-k,(t-i+1)p^n}
\] 
Then $(g-1)^{p^n-1}(z_\ell)=x_{(t+1)p^n+R_{n-1}+S_{n-1}+1-k}$.
\item Let $\zeta=M(5)^{-1}X$.  Define
$y_\ell \in D_{(r_n+s_n-t)p^n+R_{n-1}+S_{n-1}-k}$ by
\[y_\ell=(-1)^{k+R_{n-1}-S_{n-1}-1}
\sum_{i=0}^t \zeta_{i+1,1}v_{(r_n-t+i)p^n+R_{n-1}+S_{n-1}+1-k,(s_n-i)p^n}.
\]
Then $(g-1)^{(r_n+s_n-2t-2)p^n}(y_\ell)=z_\ell$.
\end{enumerate}
\end{proposition}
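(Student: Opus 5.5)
The proof follows the template of Propositions~\ref{R3AndR<S} and~\ref{R3AndR>S}: a direct binomial computation for (1), and for (2) a lifting argument using Corollary~\ref{p^n2} together with the invertibility of $M(5)$ from Lemma~\ref{M5}.

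\textbf{Part (1).} The target $x_{(t+1)p^n+R_{n-1}+S_{n-1}+1-k}$ has basis vectors $v_{a,(t+1)p^n+R_{n-1}+S_{n-1}+2-k-a}$ with $1\le a\le (t+1)p^n+R_{n-1}+S_{n-1}+1-k$. The coefficient $\alpha_a$ of this vector in $(g-1)^{p^n-1}(z_\ell)$ is
\[(-1)^{R_{n-1}+k-S_{n-1}-1}\sum_{i=0}^t(-1)^i\binom{p^n-1}{(i+1)p^n+R_{n-1}+S_{n-1}+1-k-a}.\]
Set $c=R_{n-1}+S_{n-1}+1-k$. Since $k\ge m_{n-1}+M_{n-1}+1$, we have $c\le 0$, and since $k\le p^n$ we have $c\ge 1-p^n+R_{n-1}+S_{n-1}>-p^n$.

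Next, partition the range of $a$ into intervals $I_i=[ip^n+c+1,(i+1)p^n+c]$ for $0\le i\le t$. The first interval starts at $c+1\le 1$, and the last ends exactly at the top index $(t+1)p^n+c$. So every admissible $a$ lies in exactly one $I_i$, and for that $i$ the lower index $(i+1)p^n+c-a$ lies in $[0,p^n-1]$. All other binomials vanish. The surviving term is evaluated with $\binom{p^n-1}{j}=(-1)^j$, giving
\[\alpha_a=(-1)^{R_{n-1}+k-S_{n-1}-1+i+(i+1)p^n+R_{n-1}+S_{n-1}+1-k-a}=(-1)^{i+(i+1)p^n-a}.\]
The final sign check splits as before: for $p$ odd, $i+(i+1)p^n\equiv 1$, and for $p=2$ every sign is $1$. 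In both cases $\alpha_a=(-1)^{a-1}$.

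\textbf{Part (2).} By Lemma~\ref{M5}, $r_n+s_n-2t-2<p$, so Corollary~\ref{p^n2} applies with $c=r_n+s_n-2t-2$. It gives
\[(g-1)^{(r_n+s_n-2t-2)p^n}\bigl(v_{(r_n-t+i)p^n+c',(s_n-i)p^n}\bigr)=\sum_{a}\binom{r_n+s_n-2t-2}{r_n-t+i-a-1}v_{(a+1)p^n+c',(t-a+1)p^n},\]
where $c'=R_{n-1}+S_{n-1}+1-k$. To confirm the index range, solve $(r_n-t+i)p^n-(r_n+s_n-2t-2-b)p^n=(a+1)p^n$ for the shift $b$; this gives $b=s_n-t-1-i+a$ and complementary index $r_n-t+i-a-1$. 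The condition $t\le\min(r_n-1,s_n-1)$ guarantees that the indices with $0\le a\le t$ are exactly the ones that stay positive. Terms with $a>t$ or $a<0$ vanish, either because the second subscript becomes $\le 0$ or because the binomial is zero.

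Now swap the order of summation. The coefficient of $v_{(a+1)p^n+c',(t-a+1)p^n}$ is
\[\sum_{i=0}^t\binom{r_n+s_n-2t-2}{r_n-t+i-a-1}\zeta_{i+1,1},\]
which is the $(a+1)$-st entry of $M(5)\zeta=X$, namely $(-1)^a$. Multiplying by the prefactor $(-1)^{k+R_{n-1}-S_{n-1}-1}$, which matches $(-1)^{R_{n-1}}(-1)^{k-S_{n-1}-1}$, recovers $z_\ell$.

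The main obstacle is index bookkeeping rather than any new idea. In (1) one must check that the intervals $I_i$ exactly cover the basis range, using $c\le 0$. In (2) one must confirm that no stray terms with $a$ outside $[0,t]$ survive, which is where $t\le\min(r_n-1,s_n-1)$ and the bounds on $k$ enter.
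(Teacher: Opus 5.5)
Your proposal is correct and is essentially the paper's proof. In (1) each basis vector picks up exactly one nonzero binomial, evaluated by $\binom{p^n-1}{j}=(-1)^j$; your interval bookkeeping makes explicit what the paper only asserts. In (2) you apply Corollary~\ref{p^n2} (valid since $r_n+s_n-2t-2<p$ by Lemma~\ref{M5}), swap the sums, and read off $M(5)\zeta=X$, exactly as the paper does.

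One small slip in (2). For $a<0$ it is the \emph{first} subscript $(a+1)p^n+c'$ that becomes nonpositive, since $c'\le 0$; the binomial need not vanish there. So the surviving range $0\le a\le t$ is controlled by $-p^n<c'\le 0$. The condition $t\le\min(r_n-1,s_n-1)$ plays a different role: it ensures that the vectors appearing in $y_\ell$ are legitimate basis elements.
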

Note that
\[z_\ell=(-1)^{R_{n-1}}\sum_{i=0}^t (-1)^i\sum_{j=1}^{k-m_{n-1}-M_{n-1}}d_{k-m_{n-1}-M_{n-1}+1-j,j}v_{(i +1)p^n+R_{n-1}+S_{n-1}+j-k,(t-i+1)p^n+1-j}\]
with a corresponding expression for $y_\ell$ since the $k'$th anti-diagonal of $D=J(p^n-R_{n-1},p^n-S_{n-1})$, with $1 \leq k' \leq p^n-m_{n-1}-M_{n-1}$,  is $((-1)^{R_{n-1}+k'-1},0,\dots,0)$, so $d_{k-m_{n-1}-M_{n-1},1}=(-1)^{k-S_{n-1}-1}$, but we do not need to make that recursive call here.
\begin{proof}
(1) 
 The coefficient $\alpha_a$ of $v_{a,(t+2)p^n+R_{n-1}+S_{n-1}+2-k-a}$ in $(g-1)^{p^n-1}(z_\ell)$ is
\[\sum_{i=0}^t (-1)^{i+k+R_{n-1}-S_{n-1}-1} \binom{p^n-1}{(i +1)p^n+R_{n-1}+S_{n-1}+1-k-a}.\]
Exactly one of these terms is nonzero.  So
\begin{align*}
\alpha_a&= (-1)^{i+k+R_{n-1}-S_{n-1}-1} \binom{p^n-1}{(i +1)p^n+R_{n-1}+S_{n-1}+1-k-a}\\
&=(-1)^{i+k+R_{n-1}-S_{n-1}-1} (-1)^{(i +1)p^n+R_{n-1}+S_{n-1}+1-k-a}\\
&=(-1)^{1-a}\\
&=(-1)^{a-1}.
\end{align*}
Here we are using the fact that if $p$ is odd, so is $i+(i+1)p^n$.

(2) When $1 \leq i \leq t+1$ the $(i,1)$ entry of $M(5) \zeta$ is
\[(-1)^{i-1}=\sum_{j=1}^{t+1} \binom{r_n+s_n-2t-2}{r_n-t+j-i-1}\zeta_{j,1}=\sum_{j=0}^t \binom{r_n+s_n-2t-2}{r_n-t+j-i}\zeta_{j+1,1}.\]
By Corollary~\ref{p^n2},  since the only vectors in $\mathcal{B}_{(t+2)p^n+R_{n-1}+S_{n-1}-k}$ with non-zero coefficients in 
$(g-1)^{(r_n+s_n-2t-2)p^n}(v_{(r_n-t+i)p^n+R_{n-1}+S_{n-1}+1-k,(s_n-i)p^n})$ are\\
$v_{(a+1)p^n+R_{n-1}+S_{n-1}+1-k,(t-a+1)p^n}$ with $0 \leq a \leq t$,
\begin{align*}
(g-1)&^{(r_n+s_n-2t-2)p^n}(v_{(r_n-t+i)p^n+R_{n-1}+S_{n-1}+1-k,(s_n-i)p^n})\\
&=\sum_{a=0}^t \binom{r_n+s_n-2t-2}{r_n-t+i-a-1}v_{(a+1)p^n+R_{n-1}+S_{n-1}+1-k,(t-a+1)p^n},
\end{align*}
using the fact that $r_n+s_n-2t-2<p$ by  Lemma~\ref{M5}.

Letting $z'_\ell=(-1)^{k+R_{n-1}-S_{n-1}-1}z_\ell$ and $y'_\ell=(-1)^{k+R_{n-1}-S_{n-1}-1}y_\ell$, we will show that
\[(g-1)^{(r_n+s_n-2t-2)p^n}(y'_\ell)=(-1)^{(-1)^{k+R_{n-1}-S_{n-1}-1}}z'_\ell.\]
Then
\begin{align*}
(g&-1)^{(r_n+s_n-2t-2)p^n}(y'_\ell)\\
&=\sum_{i=0}^t \zeta_{i+1,1}
\left(\sum_{a=0}^t \binom{r_n+s_n-2t-2}{r_n-t+i-a-1}v_{(a+1)p^n+R_{n-1}+S_{n-1}+1-k,(t-a+1)p^n}\right)\\
&=\sum_{a=0}^t \left(\sum_{i=0}^t \binom{r_n+s_n-2t-2}{r_n-t+i-a-1}\zeta_{i+1,1}\right)
v_{(a+1)p^n+R_{n-1}+S_{n-1}+1-k,(t-a+1)p^n}\\
&=\sum_{a=0}^t (-1)^av_{(a+1)p^n+R_{n-1}+S_{n-1}+1-k,(t-a+1)p^n}\\
&=z'_\ell.
\end{align*}
\end{proof}

A careful examination of the results in this section shows that our choice of $y_\ell$ (and $z_\ell$)  when $|\{i \mid \lambda_i=\lambda_\ell\}|>1$  is consistent with Lemma~\ref{Mult} in Section~\ref{J}.  For example in Proposition~\ref{R1}, the presence of the term $(-1)^{R_{n-1}-k}$ ensures this.  And in Proposition~\ref{R2}, the fact that $a_{k+1-j,j}$ from $J(R_{n-1},S_{n-1})$ and $d_{k-e_n+1-j,j}$ from $J(p^n-R_{n-1},p^n-S_{n-1})$ meet the requirement for $\mu_k$ means $y_\ell$ does too.

\section{$z_\ell$ and $y_\ell$ when $m_{n-1}=0$}\label{m=0}

We are still assuming that $\max(r+s-p^{n+1},0)+1 \leq \ell=t p^n+k \leq \min(r,s)$ where $t$ and $k$ are nonnegative integers with $1 \leq k \leq p^n$.   The matrices $M(3)$, $M(4)$, and $M(5)$ were all defined in Section~\ref{m>0} and $X$ will the $(t+1) \times 1$ matrix over $F$ with $(i,1)$ entry $(-1)^{i-1}$.

\begin{lemma}\label{M=0}
Assume that $M_{n-1}=0$.    In this case $r=r_np^n$ and $s=s_np^n$.   Assume that $t$ and $k$ are integers satisfying 
$0 \leq t  \leq \min(r_n-1,s_n-1)$ and $1 \leq k \leq p^n$.  Then $\lambda_\ell=(r_n+s_n-2t-1)p^n$ by Theorem~\ref{RenaudThm}.
\begin{enumerate}
\item Define $z_\ell \in D_{(t+2)p^n-k}$ by
\[
z_\ell=\sum_{i=0}^{t}(-1)^i(-1)^{k-1}v_{(i+1)p^n+1-k,(t-i+1)p^n}.
\]
Then $(g-1)^{p^n-1}(z_\ell)=x_{(t+1)p^n+1-k}$.
\item Let $\zeta=M(5)^{-1}X$.  Define $y_\ell \in D_{(r_n+s_n-t)p^n-k}$ by
\[y_\ell=\sum_{i=0}^t \zeta_{i+1,1}(-1)^{k-1}v_{(r_n-t+i)p^n+1-k,(s_n-i)p^n}.
\]
Then $(g-1)^{ (r_n+s_n-2 t-2)p^n}(y_\ell)=z_\ell$.
\end{enumerate}
\end{lemma}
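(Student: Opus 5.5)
This is the degenerate case $R_{n-1}=S_{n-1}=0$ of Proposition~\ref{R5}, and I will follow that proof essentially verbatim. First I identify $\lambda_\ell$ from Theorem~\ref{RenaudThm}. With $m_{n-1}=M_{n-1}=0$ we have $W=p^n\cdot(-p^n)$, so $w_k=-p^n$ for every $k$. Hence $\lambda_\ell=(r_n+s_n-2t)p^n-p^n=(r_n+s_n-2t-1)p^n$.

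The constraint $t\le\min(r_n-1,s_n-1)$ follows from $tp^n+k\le\min(r_np^n,s_np^n)$ with $k\ge 1$. So $(g-1)^{(r_n+s_n-2t-2)p^n}$ maps $D_{(r_n+s_n-t)p^n-k}$ to $D_{(t+2)p^n-k}$, and $(g-1)^{p^n-1}$ maps $D_{(t+2)p^n-k}$ to $D_{(t+1)p^n+1-k}$. This matches the claimed targets.

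For (1), I compute the coefficient $\alpha_a$ of $v_{a,(t+1)p^n+2-k-a}$ in $(g-1)^{p^n-1}(z_\ell)$, where $1\le a\le (t+1)p^n+1-k$:
\[\alpha_a=\sum_{i=0}^t(-1)^{i+k-1}\binom{p^n-1}{(i+1)p^n+1-k-a}.\]
The intervals $[ip^n+2-k,(i+1)p^n+1-k]$ for $0\le i\le t$ partition $[2-k,(t+1)p^n+1-k]$, which contains the range of $a$. So exactly one $i$ gives a lower index in $[0,p^n-1]$. There the binomial equals $(-1)^{(i+1)p^n+1-k-a}$ by the lemma on $\binom{p^n-1}{i}$.

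Multiplying gives $(-1)^{i+(i+1)p^n-a}$. For $p$ odd, $i+(i+1)p^n$ is odd, so this equals $(-1)^{a-1}$; for $p=2$, all signs are $1$. Hence $(g-1)^{p^n-1}(z_\ell)=x_{(t+1)p^n+1-k}$.

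For (2), I need Lemma~\ref{M5}: $M(5)$ is invertible and $r_n+s_n-2t-2<p$. Its proof needs adjusting, since the hypothesis $m_{n-1}+M_{n-1}+1\le k$ now reads $1\le k$. The inequality $r_n-t-1\le r_n+s_n-2t-2$ follows from $t\le s_n-1$.

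For $r_n+s_n-t-2<p$: since $\max(r,s)<p^{n+1}$ we have $r_n,s_n\le p-1$. If $r+s>p^{n+1}$, then $tp^n+k>(r_n+s_n-p)p^n$ forces $t\ge r_n+s_n-p$, so $r_n+s_n-t\le p$. Otherwise $r_n+s_n\le p$ directly, and the same bound holds. Either way $r_n+s_n-t-2<p$, and Lemma~\ref{pdiv} applies.

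Next, Corollary~\ref{p^n2} gives
\[(g-1)^{(r_n+s_n-2t-2)p^n}\bigl(v_{(r_n-t+i)p^n+1-k,(s_n-i)p^n}\bigr)=\sum_{a=0}^t\binom{r_n+s_n-2t-2}{r_n-t+i-a-1}v_{(a+1)p^n+1-k,(t-a+1)p^n}.\]
Here I must check that terms outside $0\le a\le t$ vanish: either the binomial is zero, or an index is nonpositive because $1-k\le 0$ and $s_n-i-(s_n-t-1)+\dots$ bounds the second index. Swapping the sums, the coefficient of $v_{(a+1)p^n+1-k,(t-a+1)p^n}$ is $(-1)^{k-1}(M(5)\zeta)_{a+1,1}=(-1)^{k-1}(-1)^a$, which gives $z_\ell$.

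I expect the main obstacle to be this bookkeeping: confirming that Corollary~\ref{p^n2} produces exactly the indices $0\le a\le t$ with the row shift matching $M(5)$'s entry $\binom{\cdot}{r_n-t+j-i-1}$, and the modified invertibility argument for $M(5)$.
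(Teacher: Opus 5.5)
Your proposal is correct and takes the same approach as the paper, which proves this lemma simply by observing that it is the special case $R_{n-1}=S_{n-1}=0$ of Proposition~\ref{R5}. You carry out that specialization explicitly, rightly take $\lambda_\ell$ from Theorem~\ref{RenaudThm} (Corollary~\ref{RenaudCor} assumes $m_{n-1}>0$), and check that the proof of Lemma~\ref{M5} still goes through with $k\ge 1$; the one garbled step, the vanishing check in part (2), just amounts to noting that for $a<0$ the first index is $(a+1)p^n+1-k\le 1-k\le 0$ and for $a>t$ the second index is $(t-a+1)p^n\le 0$.
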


This result is a special case of Proposition~\ref{R5}.

\begin{lemma}\label{M=R1}
Assume that $m_{n-1}=0$,  and $M_{n-1}=R_{n-1}>0$.  In this case $r=r_n p^n+R_{n-1}$ and $s=s_n p^n$.  Assume that $t$ and $k$ are integers satisfying $0 \leq t \leq \min(r_n,s_n-1)$ and $1 \leq k \leq R_{n-1}$.   Then $\lambda_\ell=(r_n+s_n-2t)p^n$ by Theorem~\ref{RenaudThm}.
\begin{enumerate}
\item Define $z_\ell \in D_{(t+1)p^n+R_{n-1}-k}$  by
\[
z_\ell=\sum_{i=0}^t (-1)^i (-1)^{R_{n-1}-k}v_{i p^n+R_{n-1}+1-k,(t-i+1)p^n}.
\]
Then  $(g-1)^{p^n-1}(z_\ell)=x_{t p^n+R_{n-1}+1-k}$.
\item Let $\zeta=M(4)^{-1}X$.  Define $y_\ell \in D_{(r_n+s_n-t)p^n+R_{n-1}-k}$
by
\[y_\ell=\sum_{i=0}^t  \zeta_{i+1,1} (-1)^{R_{n-1}-k}v_{(r_n-t+i)p^n+R_{n-1}+1-k,(s_n-i)p^n}
\]
Then $(g-1)^{(r_n+s_n-2t-1)p^n}(y_\ell)=z_\ell$.
\end{enumerate}
\end{lemma}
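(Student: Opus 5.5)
The plan is to mirror the proof of Proposition~\ref{R3AndR>S}. With $S_{n-1}=0$, the vectors appearing here coincide with those of that proposition: $v_{ip^n+R_{n-1}+1-k,(t-i+1)p^n}$ in $z_\ell$, with sign $(-1)^{R_{n-1}-k}$. One could simply say the lemma is the case $S_{n-1}=0$ of Proposition~\ref{R3AndR>S}. However, that proposition was proved under $m_{n-1}>0$, so I would check the computations directly. They do not use $m_{n-1}>0$ anywhere except through the value of $\lambda_\ell$. That value is supplied here by Theorem~\ref{RenaudThm} with $W=R_{n-1}\cdot(0)\oplus(p^n-R_{n-1})\cdot(-p^n)$: for $1\le k\le R_{n-1}$ we get $w_k=0$, so $\lambda_\ell=(r_n+s_n-2t)p^n$.

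For (1), I compute the coefficient $\alpha_a$ of $v_{a,tp^n+R_{n-1}+2-k-a}$ in $(g-1)^{p^n-1}(z_\ell)$. By the first lemma of Section~\ref{Calc} it is
\[\alpha_a=(-1)^{R_{n-1}-k}\sum_{i=0}^t(-1)^i\binom{p^n-1}{ip^n+R_{n-1}+1-k-a}.\]
The index range is $1\le a\le tp^n+R_{n-1}+1-k$. This range is partitioned by the intervals $I_0=[1,R_{n-1}+1-k]$ and $I_j=[(j-1)p^n+R_{n-1}+2-k,\,jp^n+R_{n-1}+1-k]$ for $1\le j\le t$, so exactly one $i$ gives a bottom index in $[0,p^n-1]$. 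The lemma $\binom{p^n-1}{m}=(-1)^m$ then gives
\[\alpha_a=(-1)^{i+ip^n+1-a}=(-1)^{a-1}.\]
For $p=2$ all signs are $1$; for $p$ odd, $i+ip^n$ is even. It should also be checked that the second tensor index stays positive and at most $s$, which holds since $t\le s_n-1$.

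For (2), I first establish invertibility of $M(4)$ and $r_n+s_n-2t-1<p$, redoing Lemma~\ref{M4} with $m_{n-1}=0$. From $tp^n+k\le s_np^n$ and $k\ge1$ we get $t\le s_n-1$, hence $r_n-t\le r_n+s_n-2t-1$. From $tp^n+k>r+s-p^{n+1}=(r_n+s_n-p)p^n+R_{n-1}$ and $k\le R_{n-1}$ we get $t>r_n+s_n-p$, so $r_n+s_n-t-1<p$. Lemma~\ref{pdiv} then applies. Next, by Corollary~\ref{p^n2},
\[(g-1)^{(r_n+s_n-2t-1)p^n}\bigl(v_{(r_n-t+i)p^n+R_{n-1}+1-k,(s_n-i)p^n}\bigr)=\sum_{a}\binom{r_n+s_n-2t-1}{r_n-t+i-a}v_{ap^n+R_{n-1}+1-k,(t-a+1)p^n},\]
where only $0\le a\le t$ survive. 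Interchanging the sums and using $M(4)\zeta=X$ gives coefficient $(-1)^a$ on each of these vectors, which is $z_\ell$.

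The main obstacle is bookkeeping: confirming that the terms with $a$ outside $[0,t]$ vanish, either because the first index is $\le0$ or because the binomial is zero. This needs to be checked carefully from the bounds on $t$. I would also check that the indices of $y_\ell$ are consistent. The second index in $y_\ell$ is $(s_n-i)p^n$, whereas Proposition~\ref{R3AndR>S} used $(s_n-t)p^n$. I will follow the statement as written, verifying that the total degree is $(r_n+s_n-t)p^n+R_{n-1}-k$ and matching the binomial offset $r_n-t+i-a$ to the $(i,j)$ pattern of $M(4)$.
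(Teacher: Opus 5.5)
Your proposal is correct and is essentially the paper's argument. The paper simply declares the lemma the case $S_{n-1}=0$ of Proposition~\ref{R3AndR>S}. You redo that proposition's two computations directly, correctly noting that $m_{n-1}>0$ entered only through $\lambda_\ell$ (supplied here by Theorem~\ref{RenaudThm}), and that the second index $(s_n-i)p^n$ in this lemma is the right one, so the $(s_n-t)p^n$ in Proposition~\ref{R3AndR>S} is a typo.

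One small correction to your bookkeeping: for $a>t$ the terms vanish because the second index $(t-a+1)p^n$ is $\le 0$, not because the binomial vanishes, and this is exactly the convention already built into Corollary~\ref{p^n2}.
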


This result is a special case of Proposition~\ref{R3AndR>S}.

\begin{lemma}\label{M=R2}
Assume that $m_{n-1}=0$,  and $M_{n-1}=R_{n-1}>0$.  In this case $r=r_n p^n+R_{n-1}$ and $s=s_n p^n$.  Assume that $t$ and $k$ are integers satisfying $0 \leq t \leq \min(r_n,s_n-1)$ and $R_{n-1}+1 \leq k \leq p^n$.   Then $\lambda_\ell=(r_n+s_n-2t-1)p^n$ by Theorem~\ref{RenaudThm}.
\begin{enumerate}
\item Define $z_\ell \in D_{(t+2)p^n+R_{n-1}-k}$ by
\[z_\ell=(-1)^{R_{n-1}}\sum_{i=0}^t (-1)^{i +k-1} v_{(i+1)p^n+R_{n-1}+1-k,(t-i+1)p^n}.
\]
Then $(g-1)^{p^n-1}(z_\ell)=x_{(t+1)p^n+R_{n-1}+1-k}$.
\item Let $\zeta=M(5)^{-1}X$.  Define $y_\ell \in D_{(r_n+s_n-t)p^n+R_{n-1}-k}$
by
\[y_\ell=\sum_{i=0}^t  \zeta_{i+1,1}(-1)^{R_{n-1}+k-1}v_{(r_n-t+i)p^n+R_{n-1}+1-k,(s_n-i)p^n}.
\]
Then $(g-1)^{(r_n+s_n-2t-2)p^n}(y_\ell)=z_\ell$.
\end{enumerate}
\end{lemma}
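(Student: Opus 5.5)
The natural plan is to treat Lemma~\ref{M=R2} as the specialization $S_{n-1}=0$ of Proposition~\ref{R5}, but to check the argument directly, because Proposition~\ref{R5} was stated under the hypothesis $m_{n-1}>0$. With $S_{n-1}=0$ we have $m_{n-1}=0$, $M_{n-1}=R_{n-1}$ and $m_{n-1}+M_{n-1}=R_{n-1}$. The range $m_{n-1}+M_{n-1}+1\le k\le p^n$ then becomes $R_{n-1}+1\le k\le p^n$, and the vectors $v_{(i+1)p^n+R_{n-1}+S_{n-1}+1-k,(t-i+1)p^n}$ become exactly those appearing in $z_\ell$. The sign $(-1)^{k+R_{n-1}-S_{n-1}-1}$ becomes $(-1)^{R_{n-1}+k-1}$, which matches the stated $y_\ell$ and $z_\ell$. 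The value $\lambda_\ell=(r_n+s_n-2t-1)p^n$ comes directly from Theorem~\ref{RenaudThm}: here $W=R_{n-1}\cdot(0)\oplus(p^n-R_{n-1})\cdot(-p^n)$, and $k>R_{n-1}$ gives $w_k=-p^n$.

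For part (1), I would compute the coefficient $\alpha_a$ of $v_{a,(t+1)p^n+R_{n-1}+2-k-a}$ in $(g-1)^{p^n-1}(z_\ell)$, with $1\le a\le (t+1)p^n+R_{n-1}+1-k$. This coefficient equals
\[(-1)^{R_{n-1}}\sum_{i=0}^t(-1)^{i+k-1}\binom{p^n-1}{(i+1)p^n+R_{n-1}+1-k-a}.\]
The intervals $[ip^n+R_{n-1}+2-k,(i+1)p^n+R_{n-1}+1-k]$ have length $p^n$ and are consecutive. Since $k\ge R_{n-1}+1$, the interval for $i=0$ starts at or below $1$, and the interval for $i=t$ ends at the top of the range of $a$. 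Hence exactly one term is nonzero. Applying the lemma $\binom{p^n-1}{j}=(-1)^j$, that term equals $(-1)^{i+(i+1)p^n-a}$. This is $(-1)^{a-1}$: when $p$ is odd, $i+(i+1)p^n$ is odd, and when $p=2$ every sign is $1$.

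For part (2), I would first confirm that Lemma~\ref{M5}'s argument still gives invertibility of $M(5)$ and $r_n+s_n-2t-2<p$. The inequality $t\le\min(r_n,s_n-1)$ forces $r_n-t-1\le r_n+s_n-2t-2$ only if $t\le r_n-1$. That is the one point needing care, since the statement allows $t=r_n$. But $\ell=tp^n+k\le r=r_np^n+R_{n-1}$ together with $k>R_{n-1}$ gives $t\le r_n-1$. Next, $\ell>r+s-p^{n+1}=(r_n+s_n-p)p^n+R_{n-1}$ with $k\le p^n$ gives $t\ge r_n+s_n-p$, and hence $r_n+s_n-t-2<p$. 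Lemma~\ref{pdiv} then applies.

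I would then use Corollary~\ref{p^n2} to expand $(g-1)^{(r_n+s_n-2t-2)p^n}$ applied to $v_{(r_n-t+i)p^n+R_{n-1}+1-k,(s_n-i)p^n}$. This gives
\[\sum_{a=0}^t\binom{r_n+s_n-2t-2}{r_n-t+i-a-1}v_{(a+1)p^n+R_{n-1}+1-k,(t-a+1)p^n}.\]
In this expansion, terms with first index $\le 0$ (possible since $k>R_{n-1}$) or with second index $\le 0$ vanish; they also drop out of the binomial range. Interchanging sums and using $M(5)\zeta=X$ produces $\sum_a(-1)^a v_{\dots}$ times the sign, which is $z_\ell$.

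The main obstacle is this index bookkeeping at the boundary: checking that the first-index-zero case $a=0$, when $k=p^n+R_{n-1}+1$, cannot occur, which follows from $k\le p^n$. The rest is routine.
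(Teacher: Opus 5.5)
Your proposal is correct and follows the paper's route. The paper simply records this lemma as the $S_{n-1}=0$ case of Proposition~\ref{R5}, and you re-run that proposition's computation to confirm nothing in it depends on $m_{n-1}>0$: the single-nonzero-term argument with $\binom{p^n-1}{j}=(-1)^j$ for part (1), and Corollary~\ref{p^n2} together with $M(5)\zeta=X$ for part (2). Your observation that $\ell\le r$ and $k>R_{n-1}$ force $t\le r_n-1$ is a worthwhile check, since Lemma~\ref{M5} needs this and the stated bound $t\le\min(r_n,s_n-1)$ does not show it.
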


This result is a special case of Proposition~\ref{R5}.

\begin{lemma}\label{M=S1}
Assume that $m_{n-1}=0$ and $M_{n-1}=S_{n-1}>0$.  In this case $r=r_n p^n$ and $s=s_n p^n+S_{n-1}$.  Assume that  $t$ and $k$ are integers satisfying $0 \leq t \leq \min(r_n-1,s_n)$ and $1 \leq k \leq S_{n-1}$.   Then $\lambda_\ell=(r_n+s_n-2t)p^n$ by Theorem~\ref{RenaudThm}.
\begin{enumerate}
\item Define $z_\ell \in D_{(t+1)p^n+S_{n-1}-k}$ by
\[
z_\ell=\sum_{i=0}^t (-1)^{i+k-1}v_{(i+1)p^n+1-k,(t-i)p^n+S_{n-1}}.
\]
Then $(g-1)^{p^n-1}(z_\ell)=x_{t p^n+S_{n-1}+1-k}$.
\item Let $\zeta=M(3)^{-1}X$.  Define $y_\ell \in D_{(r_n+s_n-t)p^n+S_{n-1}-k}$
by
\[y_\ell=(-1)^{k-1}\sum_{i=0}^t \zeta_{i+1,1}v_{(r_n-t+i)p^n+1-k,(s_n-i)p^n+S_{n-1}}.
\]
Then $(g-1)^{(r_n+s_n-2t-1)p^n}(y_\ell)=z_\ell$.
\end{enumerate}
\end{lemma}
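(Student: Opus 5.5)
The plan is to treat Lemma~\ref{M=S1} as the special case $R_{n-1}=0$ of Proposition~\ref{R3AndR<S}, and to check directly that both computations in that proof go through. First, $\lambda_\ell$ comes from Theorem~\ref{RenaudThm}. With $m_{n-1}=0$ and $M_{n-1}=S_{n-1}$, we have $W=S_{n-1}\cdot(0)\oplus(p^n-S_{n-1})\cdot(-p^n)$. For $1\le k\le S_{n-1}$ this gives $w_k=0$, so $\lambda_\ell=(r_n+s_n-2t)p^n$.

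For part (1), I compute the coefficient $\alpha_a$ of $v_{a,tp^n+S_{n-1}+2-k-a}$ in $(g-1)^{p^n-1}(z_\ell)$ using the first lemma of Section~\ref{Calc}. It equals
\[\alpha_a=\sum_{i=0}^t(-1)^{i+k-1}\binom{p^n-1}{(i+1)p^n+1-k-a},\]
where $1\le a\le tp^n+S_{n-1}+1-k$. Consider the intervals $[(i)p^n+2-k,(i+1)p^n+1-k]$ for $0\le i\le t$, with the $i=0$ interval truncated to start at $1$. These are disjoint and cover the admissible range of $a$: the top endpoint $(t+1)p^n+1-k$ is at least $tp^n+S_{n-1}+1-k$ because $S_{n-1}<p^n$. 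So exactly one binomial is nonzero. By the lemma $\binom{p^n-1}{j}=(-1)^j$ it contributes
\[(-1)^{i+k-1+(i+1)p^n+1-k-a}=(-1)^{i+(i+1)p^n-a}.\]
This equals $(-1)^{a-1}$: for $p$ odd, $i+(i+1)p^n\equiv 1 \pmod 2$, and for $p=2$ all signs are trivial. This is verbatim the argument of Proposition~\ref{R3AndR<S}(1) with $R_{n-1}=0$.

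For part (2), I first need invertibility of $M(3)$ and $r_n+s_n-2t-1<p$, following Lemma~\ref{M3}.
\begin{itemize}
\item From $tp^n+k\le r=r_np^n$ with $k\ge1$ we get $t\le r_n-1$, hence $r_n-t-1\le r_n+s_n-2t-1$.
\item From $\ell>r+s-p^{n+1}=(r_n+s_n-p)p^n+S_{n-1}$ with $k\le S_{n-1}$ we get $t>r_n+s_n-p$, so $r_n+s_n-t-1<p$.
\end{itemize}
Lemma~\ref{pdiv} then gives invertibility.

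Next, I apply Corollary~\ref{p^n2} to each basis vector $v_{(r_n-t+i)p^n+1-k,(s_n-i)p^n+S_{n-1}}$. The terms with $i'$ in the corollary give
\[v_{(r_n-t+i-(r_n+s_n-2t-1)+i')p^n+1-k,\;(s_n-i-i')p^n+S_{n-1}}.\]
Reindexing by $a=t-i-i'$, the only surviving vectors are $v_{(a+1)p^n+1-k,(t-a)p^n+S_{n-1}}$ for $0\le a\le t$. Their coefficients are $\binom{r_n+s_n-2t-1}{r_n-t+i-a-1}$. Vectors with $a<0$ or $a>t$ vanish because an index becomes nonpositive: the first index is $\le 0$ when $a\le -1$, and the second index is $\le0$ when $a\ge t+1$, using $S_{n-1}<p^n$.

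Summing against $\zeta=M(3)^{-1}X$, the $a$-th coefficient becomes $\sum_i\binom{r_n+s_n-2t-1}{r_n-t+i-a-1}\zeta_{i+1,1}=(-1)^a$, which is exactly the $(a+1)$-th row of $M(3)\zeta=X$. Multiplying by $(-1)^{k-1}$ then yields $z_\ell$.

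The main obstacle I expect is bookkeeping of ranges: making sure the intervals in (1) cover all $a$ and that the boundary vectors in (2) truly vanish. This reduces to $1\le k\le S_{n-1}<p^n$. Beyond that, I could simply cite Proposition~\ref{R3AndR<S} with $R_{n-1}=0$. That proposition's proof never used $R_{n-1}>0$ or $m_{n-1}>0$, only $R_{n-1}+1\le k\le S_{n-1}$.
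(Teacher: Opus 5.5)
Your proposal is correct and takes the same approach as the paper, which proves this lemma simply by noting that it is the special case $R_{n-1}=0$ of Proposition~\ref{R3AndR<S}; your direct re-check of both computations is sound. There are two harmless slips: $r_n-t-1\le r_n+s_n-2t-1$ follows from the hypothesis $t\le s_n$, not from $t\le r_n-1$ (which instead gives the nonnegativity $r_n-t-1\ge 0$ needed in Lemma~\ref{pdiv}), and the reindexing in part (2) should be $a=t-s_n+i+i'$ rather than $a=t-i-i'$, although the vectors $v_{(a+1)p^n+1-k,(t-a)p^n+S_{n-1}}$ and coefficients $\binom{r_n+s_n-2t-1}{r_n-t+i-a-1}$ you state are right.
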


This result is a special case of Proposition~\ref{R3AndR<S}.

\begin{lemma}\label{M=S2}
Assume that $m_{n-1}=0$ and $M_{n-1}=S_{n-1}>0$.  In this case $r=r_n p^n$ and $s=s_n p^n+S_{n-1}$.  Assume that  $t$ and $k$ are integers satisfying $0 \leq t \leq \min(r_n-1,s_n)$ and $S_{n-1}+1 \leq k \leq p^n$.   Then $\lambda_\ell=(r_n+s_n-2t-1)p^n$ by Theorem~\ref{RenaudThm}.
\begin{enumerate}
\item Define $z_\ell \in D_{(t+2)p^n+S_{n-1}-k}$ by
\[
z_\ell=\sum_{i=0}^t (-1)^{i+k-S_{n-1}-1}v_{(i+1)p^n+S_{n-1}+1-k,(t-i+1)p^n}.
\]
Then $(g-1)^{p^n-1}(z_\ell)=x_{(t+1)p^n+S_{n-1}+1-k}$.
\item Let $\zeta=M(5)^{-1}X$.   Define $y_\ell \in D_{(r_n+s_n-t)p^n+S_{n-1}-k}$
by
\[y_\ell=(-1)^{k-S_{n-1}-1}\sum_{i=0}^t \zeta_{i+1,1} v_{(r_n-t+i)p^n+S_{n-1}+1-k,(s_n-i)p^n}.\]
Then $(g-1)^{(r_n+s_n-2t-2)p^n}(y_\ell)=z_\ell$.
\end{enumerate}
\end{lemma}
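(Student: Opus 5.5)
The quickest route is to observe that Lemma~\ref{M=S2} is a special case of Proposition~\ref{R5}, in the same way that Lemmas~\ref{M=0} and~\ref{M=R2} are. I would still check the two parts directly, since the computations are short and do not use the hypothesis $m_{n-1}>0$ anywhere essential.

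First, the value of $\lambda_\ell$. With $m_{n-1}=0$ and $M_{n-1}=S_{n-1}$, Theorem~\ref{RenaudThm} gives $W=S_{n-1}\cdot(0)\oplus(p^n-S_{n-1})\cdot(-p^n)$. For $S_{n-1}+1\le k\le p^n$ we have $w_k=-p^n$, so $\lambda_\ell=(r_n+s_n-2t-1)p^n$.

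For part (1), I would expand $(g-1)^{p^n-1}(z_\ell)$ using the lemma $(g-1)^n v_{a,b}=\sum_{i}\binom{n}{n-i}v_{a-n+i,b-i}$. The coefficient of $v_{a,(t+1)p^n+S_{n-1}+2-k-a}$, for $1\le a\le (t+1)p^n+S_{n-1}+1-k$, is
\[\sum_{i=0}^t(-1)^{i+k-S_{n-1}-1}\binom{p^n-1}{(i+1)p^n+S_{n-1}+1-k-a}.\]
The intervals $[\,ip^n+S_{n-1}+2-k,\,(i+1)p^n+S_{n-1}+1-k\,]$ for $0\le i\le t$, together with the leftover piece near $1$, cover the whole range. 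Because $k\ge S_{n-1}+1$, the lowest relevant interval already begins at or below $1$, so exactly one binomial coefficient in the sum is nonzero. That surviving term equals $(-1)^{(i+1)p^n+S_{n-1}+1-k-a}$ by the lemma $\binom{p^n-1}{j}=(-1)^j$. Multiplying by the prefactor gives $(-1)^{i+(i+1)p^n-a}$. This is $(-1)^{a-1}$ when $p$ is odd, since $i+(i+1)p^n$ is then odd, and it is trivially $(-1)^{a-1}$ when $p=2$. Hence $(g-1)^{p^n-1}(z_\ell)=x_{(t+1)p^n+S_{n-1}+1-k}$.

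For part (2), I would first verify the hypotheses of Lemma~\ref{M5}, now with $m_{n-1}+M_{n-1}=S_{n-1}<k$.
\begin{itemize}
\item From $tp^n+k\le r_np^n$ with $k\ge1$ we get $t\le r_n-1$. From $k>S_{n-1}$ and $tp^n+k\le s_np^n+S_{n-1}$ we get $t\le s_n-1$.
\item From $tp^n+k>(r_n+s_n-p)p^n+S_{n-1}$ we get $t\ge r_n+s_n-p$.
\end{itemize}
Together these give $M(5)$ invertible and $r_n+s_n-2t-2<p$.

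Corollary~\ref{p^n2} then gives
\[(g-1)^{(r_n+s_n-2t-2)p^n}\bigl(v_{(r_n-t+i)p^n+S_{n-1}+1-k,(s_n-i)p^n}\bigr)=\sum_{a=0}^t\binom{r_n+s_n-2t-2}{r_n-t+i-a-1}v_{(a+1)p^n+S_{n-1}+1-k,(t-a+1)p^n}.\]
Here the terms with $a$ outside $[0,t]$ vanish, either because the binomial coefficient is zero or because an index becomes nonpositive. Swapping the order of summation and using $M(5)\zeta=X$ turns the inner sums into $(-1)^a$. This recovers $z_\ell$, exactly as in Proposition~\ref{R5}(2).

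The only delicate point is the index bookkeeping at the boundary of the ranges of $a$, which is also where the hypothesis $m_{n-1}=0$ (rather than $m_{n-1}>0$) has to be checked to cause no harm. I expect it to be routine.
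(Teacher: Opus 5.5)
Your proposal is correct and follows the paper, whose entire proof is the remark that this lemma is the case $R_{n-1}=0$ of Proposition~\ref{R5}; your direct check of both parts reproduces that proposition's computation with $R_{n-1}=0$, and it is sound. Two small remarks. First, in part (1) there is no ``leftover piece near $1$'': for $k\ge S_{n-1}+1$ the $i=0$ interval already contains $1$, as your next sentence says. Second, you were right to extract $t\le s_n-1$ from the standing assumption $\ell\le\min(r,s)$. The stated hypothesis $t\le\min(r_n-1,s_n)$ alone would not suffice for Lemma~\ref{M5}, nor for the vectors $v_{\cdot,(s_n-i)p^n}$ to make sense.
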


This is a special case of Proposition~\ref{R5}.

\end{document}